\newtheorem{theorem}{Theorem}[section]
\newtheorem{proposition}[theorem]{Proposition}
\newtheorem{lemma}[theorem]{Lemma}
\newtheorem{remark}[theorem]{Remark}
\def\bD{\mathbb D}
\def\bE{\mathbb E}
\def\bN{\mathbb N}
\def\bP{\mathbb P}
\def\bR{\mathbb R}
\def\cA{\mathcal A}
\def\cB{\mathcal B}
\def\cD{\mathcal D}
\def\cF{\mathcal F}
\def\cH{\mathcal H}
\def\cI{\mathcal I}
\def\cD{\mathcal{D}}
\def\cF{\mathcal{F}}
\def\cG{\mathcal{G}}
\def\cI{\mathcal{I}}
\def\cH{\mathcal{H}}
\def\cP{\mathcal{P}}
\def\cS{\mathcal{S}}
\def\fX{\mathfrak{X}}
\def\bD{\mathbb{D}}
\def\bE{\mathbb{E}}
\def\bR{\mathbb{R}}
\def\e{\varepsilon}
\begin{document}

\title{Central limit theorems for heat equation with time-independent noise: the regular and rough cases}

\author{Raluca M. Balan\footnote{University of Ottawa, Department of Mathematics and Statistics, STEM Building, 150 Louis-Pasteur Private,
Ottawa, ON, K1N 6N5, Canada. E-mail: rbalan@uottawa.ca. Research
supported by a grant from the Natural Sciences and Engineering
Research Council of Canada.} \and   Wangjun Yuan\footnote{Corresponding author. University of Ottawa, Department of Mathematics and Statistics, STEM Building, 150 Louis-Pasteur Private,
Ottawa, ON, K1N 6N5, Canada. E-mail: ywangjun@connect.hku.hk.}}

\date{May 25, 2022}
\maketitle

\begin{abstract}
\noindent In this article, we investigate the asymptotic behaviour of the spatial integral of the solution to the parabolic Anderson model with time independent noise in dimension $d\geq 1$, as the domain of the integral becomes large. We consider 3 cases: (a) the case when the noise has an integrable covariance function; (b) the case when the covariance of the noise is given by the Riesz kernel; (c) the case of the rough noise, i.e. fractional noise with index $H \in (\frac{1}{4},\frac{1}{2})$ in dimension $d=1$. In each case, we identify the order of magnitude of the variance of the spatial integral, we prove a quantitative central limit theorem for the normalized spatial integral by estimating its total variation distance to a standard normal distribution, and we give the corresponding functional limit result.
\end{abstract}

Primary 60H15; Secondary 60H07, 60G15, 60F05

\medskip
\noindent {\bf Keywords:} parabolic Anderson model, spatially homogeneous Gaussian noise, rough noise, Malliavin calculus, Stein's method for normal approximations

\tableofcontents

\section{Introduction}
\label{section-intro}

In the recent years, there has been a lot of interest in investigating the asymptotic properties of the spatial average of the solution to the stochastic heat and wave equations driven by a Gaussian noise, which is white or colored in space and time. This line of investigation was initiated in the seminal article \cite{HNV20} which considered the case of the heat equation driven by a space-time white noise in dimension $d=1$, with initial condition 1 and non-linear factor $\sigma(u)$ multiplying the noise, for a Lipschitz function $\sigma$.
In \cite{HNV20}, the authors proved that for any fixed time $t>0$, the
spatial average of the solution $u$:
\[
F_R(t)=\int_{-R}^R u(t,x)dx
\]
has approximately a normal distribution, as $R\to \infty$. More precisely, these authors showed that the variance $\sigma_R^2(t):=\bE[F_R^2(t)] \sim C R$ as $R \to \infty$, and estimated that the total variation distance ($d_{TV}$) between $F_R(t)/\sigma_R(t)$ and $Z \sim N(0,1)$ is of the order $R^{-1/2}$, a result which is known in the literature as the {\em quantitative central limit theorem} (QCLT). This was achieved by using the powerful methods of Malliavin calculus, combined with Stein's method for normal approximations, an area which is explored in depth in the monograph \cite{NP}. Since then, there has been a steady flow of new contributions to this area, which includes the present article.
We review below some of the most important contributions in this area.

The case of the heat equation with white noise in time, Lipschitz function $\sigma$, and spatial covariance given by the Riesz kernel $\gamma(x)=|x|^{-\beta}$ for $\beta \in (0,d\wedge2)$ was studied in \cite{HNVZ}, where it was proved that $\sigma_R^2(t) \sim C R^{2d-\beta}$ and $d_{TV} \leq CR^{-\beta/2}$. When the noise is colored in time, the situation is more complex, and even the basic question of existence of solution remains an open problem for general functions $\sigma$, except in the linear case $\sigma(u)=u$ (known as the {\em parabolic Anderson model}).
The problem of QCLT for this model with fractional noise in time with index $H_0>1/2$ was considered in \cite{NZ20} in two cases: (a) $\gamma \in L^1(\bR^d)$ and (b) $\gamma(x)=x^{-\beta}$ for $\beta \in (0,d\wedge2)$. It was shown there that $\sigma_R^2(t) \sim C R^d$ in case (a), and $\sigma_R^2(t) \sim C R^{2d-\beta}$ in case (b). The method of \cite{NZ20} uses the multivariate chaotic central limit theorem, which yields the normal approximation, but does not give the rate of the convergence. This rate was obtained in the recent preprint \cite{NXZ}, where it was shown that
$d_{TV} \leq CR^{-d/2}$ in case (a) and $d_{TV}\leq CR^{-\beta/2}$ in case (b), using a different method based on an improved version of the second-order Gaussian Poincar\'e inequality due to \cite{vidotto}, which was used for the first time in this context in \cite{BNQZ}. In addition, \cite{NXZ} considers the more difficult case (c) of the fractional noise in space with index $H<1/2$ (in dimension $d=1$), which is also fractional in time with index $H_0>1/2$ satisfying $H_0+H>3/4$. In this case, the authors discovered the surprising rates
$\sigma_R^2(t) \sim C R$ and $d_{TV} \leq CR^{-1/2}$, which show that once we descend below the value $1/2$, the spatial index $H$ does not have any impact on the rates in the QCLT. In the present article, using similar methods to \cite{NXZ}, we consider the QCLT problem for the parabolic Anderson model with time-independent noise (which corresponds formally to the case $H_0=1)$, and spatial covariance given by cases (a),(b),(c) above.
In addition, we prove the corresponding functional limit result.

The case of the wave equation in dimension $d=1$ with white noise in time and non-linear factor $\sigma(u)$ multiplying the noise (for a Lipschitz function $\sigma$) was studied in \cite{DNZ20} in the case of fractional noise in space with index $H\geq 1/2$, where it was shown that $\sigma_R^2(t) \sim C R^{2H}$ and $d_{TV} \leq CR^{H-1}$. The extension to the case $d=2$ was done in \cite{NZ20} and \cite{BNZ21} in cases (a), respectively (b) mentioned above, assuming in addition that $\gamma\in L^{\ell}(\bR^d)$ for some $\ell>1$ in case (a). More precisely, it was shown in \cite{BNZ21} that $\sigma_R^2(t) \sim C R^{d}$ and $d_{TV} \leq CR^{-1}$ in case (a), and $\sigma_R^2(t) \sim C R^{4-\beta}$ and $d_{TV} \leq CR^{-\beta/2}$ in case (b). Similarly to the heat equation, for the wave equation with colored noise in time, the existence of solution is an open problem, except in the case $\sigma(u)=u$ (known as the {\em hyperbolic Anderson model}). The QCLT problem for this model with colored noise in time was considered in \cite{BNQZ} in dimension $d\leq 2$, in cases (a) and (b) above: in case (a),
$\sigma_R^2(t) \sim C R^{d}$ and $d_{TV} \leq CR^{-d/2}$, while in case (b), $\sigma_R^2(t) \sim C R^{2d-\beta}$ and $d_{TV} \leq CR^{-\beta/2}$. The same problem for the time-independent noise has been considered in the recent preprint \cite{BY}.

\medskip

After this brief literature review, we explain now the problem considered in the present article. We are interested in the parabolic Anderson model with time-independent noise:
\begin{align} \label{eq-heat}
	\begin{cases}
		\dfrac{\partial u}{\partial t} (t,x)
		= \dfrac{1}{2} \Delta u(t,x) + u(t,x) \dot{W}(x), \
		t>0, \ x \in \bR^d, (d \geq 1)\\
		u(0,x) = 1.
	\end{cases}
\end{align}

The noise is given by a zero-mean Gaussian process $\{W(\varphi);\varphi \in \cD(\bR^d)\}$, defined on a complete probability space $(\Omega,\cF,\bP)$, with covariance
\[
\bE[W(\varphi)W(\psi)]
=\int_{\bR^d}\cF \varphi(\xi) \overline{\cF \psi(\xi)}\mu(d\xi)
=:\langle \varphi,\psi\rangle_{\cP_0},
\]
where $\mu$ is a tempered measure on $\bR^d$, called the spectral measure of the noise. Here $\cD(\bR^d)$ is the space of infinitely differentiable functions on $\bR^d$ with compact support, and $\cF \varphi (\xi)=\int_{\bR^d}e^{-i \xi \cdot x}\varphi(x) dx$ is the Fourier transform of $\varphi$.

We let $\cP_0$ be the Hilbert space defined as completion of $\cD(\bR^d)$ with respect to the inner product $\langle \cdot, \cdot \rangle_{\cP_0}$. By the isometry property, the noise can be extended to an isonormal Gaussian process $\{W(\varphi);\varphi \in \cP_0\}$, as defined in Malliavin calculus.

We say that the noise is white if $\mu(d\xi)=(2\pi)^{-d}d\xi$. In this case, $\cP_0=L^2(\bR^d)$ and $\langle \varphi,\psi\rangle_{\cP_0}=\langle \varphi,\psi\rangle_{L^2(\bR^d)}$.

We are interested in two cases:
\begin{description}
\item[(I)] ({\em the regular case}) $\gamma=\cF \mu$ is a non-negative locally-integrable function and
    $\mu$ satisfies Dalang's condition:
\begin{equation}
\label{D-cond}
\int_{\bR^d}\frac{1}{1+|\xi|^2}\mu(d\xi)<\infty,
\tag{D}
\end{equation}
    or the noise is white and $d=1$;
\item[(II)] ({\em the rough case}) $d=1$ and $\mu(d\xi)=c_H |\xi|^{1-2H}d\xi$ for some $H \in (\frac{1}{4},\frac{1}{2})$, where
    \begin{equation}
    \label{def-cH}
    c_{H}=\frac{\Gamma(2H+1)\sin(\pi H)}{2\pi}.
    \end{equation}
\end{description}

Alternatively, the inner product in $\cP_0$ can be represented as follows: in Case {\bf (I)},
\[
\langle \varphi, \psi\rangle_{\cP_0}=\int_{\bR^d} \int_{\bR^d}\gamma(x-y) \varphi(x) \psi(y)dxdy,
\]
with the convention $\gamma=\delta_0$ if the noise is white; in Case {\bf (II)},
\begin{equation}
\label{Gagliardo}
\langle \varphi, \psi\rangle_{\cP_0}=C_H \int_{\bR}\int_{\bR} \big(\varphi(x)-\varphi(y)\big)\big(\psi(x)-\psi(y)\big)|x-y|^{2H-2}dxdy,
\end{equation}
where $C_H=\frac{H(1-2H)}{2}$. We say that \eqref{Gagliardo} is the {\em Gagliardo representation}.

\medskip

A typical example of a function $\gamma$ as in Case {\bf (I)} is the Riesz kernel:
\[
\gamma(x)=|x|^{-\beta} \quad \mbox{for some} \quad \beta \in (0,d).
\]
 In this case, $\mu(d\xi)=C_{d,\beta}|\xi|^{-(d-\beta)}d\xi$, and (D) holds if and only if $\beta<2$.

\medskip

For $x \in \bR^d$ and $t>0$, let $p_t(x) = (2 \pi t)^{-d/2} \exp(-|x|^2/(2t))$ be the heat kernel, where $|\cdot|$ is the Euclidean norm. Note that the Fourier transform of $p_t$ is
\begin{align*}
	\cF p_t(\xi) = \exp \left( -\frac{t|\xi|^2}{2} \right), \quad \mbox{for all} \quad \xi \in \bR^d.
\end{align*}

A process $u=u(t,x);t\geq 0,x \in \bR^d\}$ is called a (Skorohod) {\bf solution} of \eqref{eq-heat} if it satisfies the following integral equation:
\begin{align}
\label{int-pam}
	u(t,x) = 1 + \int_0^t \int_{\bR^d} p_{t-s}(x-y) u(s,y) W(\delta y) ds,
\end{align}
where $W(\delta y)$ denotes the Skorokod integral with respect to $W$. The definition of this integral is given in Section \ref{section-Malliavin} below.

We are interested in the asymptotic behavior as $R\to \infty$ of the spatial integral:
\[
F_R(t)=\int_{B_R}\big(u(t,x)-1\big)dx,
\]
where $B_R=\{x \in \bR^d;|x|<R\}$. We let $\omega_d$ be the Lebesque measure of $B_1$. We denote
\[
\sigma_R^2(t)={\rm Var}\big(F_R(t)\big) \quad \mbox{and} \quad \rho_{t,s}(x-y)=\bE[(u(t,x)-1)(u(s,y)-1)].
\]


We consider the following assumptions:

\medskip

\noindent {\bf Assumption A.} ({\em integrable kernel}) $\gamma=\cF \mu$ is a non-negative function, $\gamma \in L^1(\bR^d)$ and $\mu$ satisfies condition (D), or $d=1$ and the noise is white.

\medskip

\noindent {\bf Assumption B.} ({\em Riesz kernel}) $\gamma(x)=|x|^{-\beta}$ for some $\beta \in (0,d \wedge 2)$.

\medskip

\noindent {\bf Assumption C.} ({\em rough noise}) $d=1$ and $\mu(d\xi)=c_H|\xi|^{1-2H}d\xi$ for some $H \in (\frac{1}{4},\frac{1}{2})$, where the constant $c_H$ is given by \eqref{def-cH}.

\medskip

We write $f(R) \sim g(R)$ if $f(R)/g(R) \to 1$ as $R \to \infty$.

\medskip

The main results of this article are summarized in the following theorem.

\begin{theorem}
\label{main}
(i) {\rm (Limiting Covariance)} For any $t>0$ and $s>0$,
\[
\bE[F_R(t)F_R(s)] \sim
\begin{cases}
R^{d}K(t,s)  & \mbox{under Assumptions A or C} \\
R^{2d-\beta}K(t,s) & \mbox{under Assumption B}
\end{cases}
\]
where
\[
K(t,s)=
\begin{cases}
\omega_d \int_{\bR^d}\rho_{t,s}(z)dz    & \mbox{under Assumption A} \\
ts \int_{B_1^2} |x-x'|^{-\beta} dxdx' & \mbox{under Assumption B} \\
2\int_{\bR}\bE[e^{\cI_{t,s}^{1,2}(z)}-\cI_{t,s}^{1,2}(z)-1]dz & \mbox{under Assumption C}
\end{cases}
\]
where $\cI_{t,s}^{1,2}(z)$ is defined by \eqref{def-cI} below; in particular,
$\sigma_R^2(t) \sim R^d K(t,t)$ under Assumptions A or C, and
$\sigma_R^2(t) \sim R^{2d-\beta} K(t,t)$ under Assumption B.

(ii) {\rm (Quantitative Central Limit Theorem)} For any $t>0$,
\[
d_{\rm TV}\left( \frac{F_R(t)}{\sigma_R(t)},Z\right) \leq
\begin{cases}
C_t R^{-d/2}  & \mbox{under Assumptions A or C}, \\
C_t R^{-\beta/2} & \mbox{under Assumption B}
\end{cases}
\]
where $Z\sim N(0,1)$ and $C_{t}>0$ is a constant depending on $t$.

(iii) {\rm (Functional Central Limit Theorem)} The process $\{Q_R(t)\}_{t\geq 0}$ given by:
\[
Q_R(t)=
\begin{cases}
R^{-d/2} F_R(t) & \mbox{under Assumptions A or C}, \\
R^{-d+\beta/2} F_R(t) & \mbox{under Assumption B}
\end{cases}
\]
has a continuous modification which converges in distribution in $C[0,\infty)$ as $R \to \infty$, to a zero-mean Gaussian process $\{\cG(t)\}_{t\geq 0}$ with covariance
$ \bE[\cG(t)\cG(s)]=K(t,s)$.
\end{theorem}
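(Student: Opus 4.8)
The plan is to follow the Malliavin--Stein paradigm and to prove the three parts in the order (i) $\to$ (ii) $\to$ (iii), since the quantitative bound in (ii) feeds the finite-dimensional convergence needed for (iii). The common starting point is the Wiener chaos expansion of the solution. Because \eqref{int-pam} is linear with constant initial datum, Picard iteration gives $u(t,x)=1+\sum_{n\ge 1} I_n(f_{t,x,n})$, where $I_n$ is the $n$-th multiple Wiener integral and $f_{t,x,n}$ is the symmetrization of an explicit product of heat kernels integrated over a time simplex (the time variables decouple because the noise is time-independent). Integrating over $B_R$ yields $F_R(t)=\sum_{n\ge 1} I_n(g_{R,n}(t))$ with $g_{R,n}(t)=\int_{B_R} f_{t,x,n}\,dx$, so that $\bE[F_R(t)F_R(s)]=\sum_{n\ge 1} n!\,\langle g_{R,n}(t),g_{R,n}(s)\rangle_{\cP_0^{\otimes n}}$.

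For (i) I would pass to the variable $z=x-y$ and write $\bE[F_R(t)F_R(s)]=\int_{\bR^d}\rho_{t,s}(z)\,|B_R\cap(B_R+z)|\,dz$. Under Assumption A the decisive lemma is that $\rho_{t,s}\in L^1(\bR^d)$, which I would extract from a Feynman--Kac moment representation of $\rho_{t,s}$ together with $\gamma\in L^1$; since $|B_R\cap(B_R+z)|/R^d\to\omega_d$ and is bounded by $\omega_d$, dominated convergence produces the constant $\omega_d\int_{\bR^d}\rho_{t,s}(z)\,dz$. Assumption C is handled analogously but through the Gagliardo representation \eqref{Gagliardo}, which is what brings in the functional $\cI^{1,2}_{t,s}(z)$ and its exponential moment. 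Under Assumption B the kernel $\gamma=|x|^{-\beta}$ is not integrable and the normalization $R^{2d-\beta}$ is dictated by the first chaos: using $g_{R,1}(t)(y)\approx t$ for $y$ deep inside $B_R$ and the scaling $\int_{B_R^2}|y-y'|^{-\beta}=R^{2d-\beta}\int_{B_1^2}|x-x'|^{-\beta}$, I would show $1!\,\langle g_{R,1}(t),g_{R,1}(s)\rangle\sim R^{2d-\beta}\,ts\int_{B_1^2}|x-x'|^{-\beta}$ and that $\sum_{n\ge 2} n!\,\langle g_{R,n}(t),g_{R,n}(s)\rangle=o(R^{2d-\beta})$.

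For (ii) I would invoke the total-variation bound coming from the second-order Gaussian Poincar\'e inequality (as in \cite{BNQZ,NXZ}), which controls $d_{\rm TV}(F_R(t)/\sigma_R(t),Z)$ by $\sigma_R^{-2}(t)$ times the square root of the variance of the inner product $\langle DF_R(t),-DL^{-1}F_R(t)\rangle_{\cP_0}$; the latter is in turn bounded by integrals of $\gamma$ against products of the $L^p(\Omega)$-norms of the Malliavin derivatives $D_y u(t,x)$ and $D^2_{y,y'}u(t,x)$. The essential inputs are the moment estimates $\|D_y u(t,x)\|_p\lesssim \int_0^t p_{t-r}(x-y)\,dr$ and a corresponding product bound for the second derivative, both obtained by differentiating \eqref{int-pam} and running a Gronwall argument that uses the finiteness of all $p$-th moments of $u$. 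Power counting in the resulting $\gamma$-integrals then yields $R^{-d/2}$ under A and C and $R^{-\beta/2}$ under B. I expect the rough case C to be the most delicate step here, since $\gamma$ has to be read through \eqref{Gagliardo} and every norm must be computed in $\cP_0$ with the Gagliardo seminorm rather than a convolution.

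For (iii) I would first upgrade (ii) to its multivariate form: via the Cram\'er--Wold device and the multivariate Malliavin--Stein bound, the finite-dimensional distributions of $Q_R$ converge to those of a centred Gaussian process $\cG$ with $\bE[\cG(t)\cG(s)]=K(t,s)$. It then remains to prove tightness in $C[0,\infty)$, and this is the main obstacle. I would establish, uniformly in $R$, a Kolmogorov-type increment bound $\bE|Q_R(t)-Q_R(s)|^{p}\le C|t-s|^{1+\eta}$: hypercontractivity reduces the $L^p$-norm to the $L^2$-norm on each chaos, and the $L^2$-norm of the increment is then controlled by re-running the variance estimates of part (i) with $f_{t,x,n}-f_{s,x,n}$ in place of $f_{t,x,n}$, the gain of a factor $|t-s|^{\eta}$ coming from the time-simplex integrals defining the kernels. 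Finite-dimensional convergence together with tightness gives convergence in law on $C[0,\infty)$, with limiting covariance $K(t,s)$ identified in (i), and the existence of the continuous modification follows from the same increment bound via the Kolmogorov continuity criterion.
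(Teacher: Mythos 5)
Your overall skeleton (chaos expansion, first-chaos domination under B, second-order Gaussian Poincar\'e inequality for the QCLT, Kolmogorov increments plus multivariate Malliavin--Stein for the FCLT) matches the paper's architecture, but two of your key technical steps are misidentified and would not go through as stated. First, in part (i) under Assumption C, the rough case is \emph{not} ``handled analogously'' to Assumption A: the functional $\cI_{t,s}^{1,2}(z)$ does not come from the Gagliardo representation \eqref{Gagliardo} at all, but from a Feynman--Kac representation of the moments (Proposition \ref{Appen-prop}), in which $\prod_j \cF p_{t_{j+1}-t_j}(\xi_1+\cdots+\xi_j)$ is rewritten as $\bE[e^{i\sum_j B_{t-t_j}\xi_j}]$ for an auxiliary Brownian motion. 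More importantly, your ``decisive lemma'' $\rho_{t,s}\in L^1$ fails under C: the limit constant is $2\int_\bR \bE[e^{\cI_{t,s}^{1,2}(z)}-\cI_{t,s}^{1,2}(z)-1]\,dz$, with the \emph{linear term subtracted}, precisely because the first chaos is negligible at scale $R$ (the paper shows $R^{-1}\bE[{\bf J}_{1,R}(t)^2]\to 0$ via Lemma \ref{lem3-2-NSZ}) while $z\mapsto\rho_{t,s}(z)=\bE[e^{\cI_{t,s}^{1,2}(z)}]-1$ by itself need not be integrable; only the sum over chaoses $n\geq 2$, i.e. $\bE[e^{\cI}-\cI-1]$, is (relation \eqref{ineq-series-claim}). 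A dominated-convergence argument on $\rho_{t,s}$ itself therefore cannot produce the stated $K(t,s)$.

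Second, in part (ii) your route to the Malliavin derivative bounds --- ``differentiating \eqref{int-pam} and running a Gronwall argument'' --- is not available in this setting. The noise is time-independent and the stochastic integral is a Skorohod integral: there is no martingale structure in time, hence no Burkholder-type inequality to close a Gronwall loop, and Meyer's inequalities would bound $\|\delta(\cdot)\|_p$ in terms of still higher Malliavin derivatives, creating a circularity. The paper instead obtains $\|D_z u(t,x)\|_p\lesssim \int_0^t p_{t-r}(x-z)\,dr$ and $\|D^2_{w,z}u(t,x)\|_p\lesssim \widetilde f_2(w,z,x;t)$ directly from the explicit chaos expansions \eqref{D-series} and \eqref{D2-series}, via the combinatorial decompositions of $\widetilde f_n$ and the maximal principle \eqref{max-principle} (Theorems \ref{main-th3} and \ref{Thm-D2-bound}), with a separate argument in the rough case where \eqref{max-principle} fails. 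Moreover, in the rough case the Poincar\'e inequality requires $L^4$ bounds on \emph{increments} $D_{z+z'}u-D_zu$ and rectangular increments of $D^2u$ weighted by $|z'|^{2H-2}$ (Proposition \ref{prop-4-1}), not just on the derivatives themselves; your proposal only gestures at this, and it is the technical heart of obtaining the rate $R^{-1/2}$ under Assumption C. Your treatment of part (iii) and of Assumptions A and B in part (i) is essentially the paper's argument.
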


This article is organized as follows. In Section 2, we give some preliminaries about Malliavin calculus. The proof of Theorem \ref{main} is given Section 3 in the regular case (under Assumptions A or B), respectively Section 4 in the rough case (under Assumption C). In each case, we prove the existence of (Skorohod) solution, and we give some estimates on its Malliavin derivatives of first and second order, which play a key role in our developments. The appendices contain some auxiliary results needed in the sequel.

\section{Malliavin Calculus Preliminaries}
\label{section-Malliavin}

In this section, we include some preliminaries on Malliavin calculus and we establish the existence and uniqueness of the solution to \eqref{eq-heat}.

Since $W=\{W(\varphi);\varphi \in \cP_0\}$ is an isonormal Gaussian process,
every square-integrable random variable $F$
which is measurable with respect to $W$ has the Wiener chaos expansion:
\begin{equation}
\label{F-chaos}
F=E(F)+\sum_{n \geq 1}I_n(f_n) \quad \mbox{for some} \quad f_n \in \cP_0^{\otimes n},
\end{equation}
where $\cP_0^{\otimes n}$ is the
$n$-th tensor product of $\cP_0$ and $I_n$
is the multiple Wiener integral with respect to $W$.
By the orthogonality of the Wiener chaos spaces,
\[
 E[I_n(f)I_m(g)]=\left\{
\begin{array}{ll} n! \, \langle \widetilde{f}, \widetilde{g} \rangle_{\cP_{0}^{\otimes n}} & \mbox{if $n=m$} \\
0 & \mbox{if $n \not=m$}
\end{array} \right.
\]
where $\widetilde{f}$ is the symmetrization of $f$ in all $n$
variables:
$$\widetilde{f}(x_1,\ldots,x_n)=\frac{1}{n!}\sum_{\rho \in S_n}f(x_{\rho(1)},\ldots,x_{\rho(n)}),$$
and $S_n$ is the set of all permutations of $\{1,
\ldots,n\}$. It can be proved that:
\begin{equation}
\label{rough-bound}
\|\widetilde f\|_{\cP_0^{\otimes n}} \leq \|f\|_{\cP_0^{\otimes n}},
\end{equation}

If $F$ has the chaos expansion \eqref{F-chaos}, then
$$E|F|^2=\sum_{n \geq 0}E|I_n(f_n)|^2=\sum_{n \geq 0}n! \, \|\widetilde{f}_n\|_{\cH^{\otimes n}}^{2}.$$

Let $\cS$ be the class of ``smooth'' random variables, i.e variables of the form
\begin{equation}
\label{form-F}F=f(W(\varphi_1),\ldots, W(\varphi_n)),
\end{equation} where $f \in C_{b}^{\infty}(\bR^n)$, $\varphi_i \in \cP_0$, $n \geq 1$, and
$C_b^{\infty}(\bR^n)$ is the class of bounded $C^{\infty}$-functions
on $\bR^n$, whose partial derivatives of all orders are bounded. The
{\em Malliavin derivative} of $F$ of the form (\ref{form-F}) is the
$\cP_0$-valued random variable given by:
$$DF:=\sum_{i=1}^{n}\frac{\partial f}{\partial x_i}(W(\varphi_1),\ldots,
W(\varphi_n))\varphi_i.$$ We endow $\cS$ with the norm
$\|F\|_{\bD^{1,2}}:=(E|F|^2)^{1/2}+(E\|D F \|_{\cP_0}^{2})^{1/2}$. The
operator $D$ can be extended to the space $\bD^{1,2}$, the
completion of $\cS$ with respect to $\|\cdot \|_{\bD^{1,2}}$.

The {\em divergence operator} $\delta$ is the adjoint of
the operator $D$. The domain of $\delta$, denoted by $\mbox{Dom} \
\delta$, is the set of $u \in L^2(\Omega;\cP_0)$ such that
$$|E \langle DF,u \rangle_{\cH}| \leq c (E|F|^2)^{1/2}, \quad \forall F \in \bD^{1,2},$$
where $c$ is a constant depending on $u$. If $u \in {\rm Dom} \
\delta$, then $\delta(u)$ is the element of $L^2(\Omega)$
characterized by the following duality relation:
\begin{equation}
\label{duality}
E(F \delta(u))=E\langle DF,u \rangle_{\cP_0}, \quad
\forall F \in \bD^{1,2}.
 \end{equation}
In particular, $E(\delta(u))=0$. If $u \in \mbox{Dom} \ \delta$, we
use the notation
$$\delta(u)=\int_{\bR^d}u(x) W(\delta x),$$
and we say that $\delta(u)$ is the {\em Skorohod integral} of $u$
with respect to $W$.

If $F$ has the chaos expansion \eqref{F-chaos}, we define the {\em Ornstein-Uhlenbeck generator}
\[
LF=\sum_{n\geq 1}n I_n(f_n)
\]
provided that the series converges in $L^2(\Omega)$.
It can be proved that
$F \in {\rm Dom}\ L$ if and only if $F \in \bD^{1,2}$ and $DF \in {\rm Dom} \ \delta$; in this case, $LF=-\delta (D F)$.
The pseudo-inverse $L^{-1}$ of $L$ is defined by
\[
L^{-1}F=\sum_{n\geq 1}\frac{1}{n} I_n(f_n).
\]
For any $F \in \bD^{1,2}$ with $\bE(F)=0$, the process $u=-D L^{-1}F$ belongs to ${\rm Dom} \ \delta$
and
\begin{equation}
\label{F-deltaD}
F=\delta(-D L^{-1} F).
\end{equation}
(see e.g. Proposition 6.5.1 of \cite{NN}).

\medskip

We return now to our problem.

If the solution exists, it should be given by the series
\begin{align} \label{eq-chaos expension-heat}
	u(t,x) = 1 + \sum_{n\geq 1} I_n(f_n(\cdot,x;t)),
\end{align}
where the kernel $f_n(\cdot,x;t)$ is given by
\begin{align*}
	f_n(x_1,\ldots,x_n,x;t)
	= \int_{T_n(t)} p_{t-t_n}(x-x_n) \ldots p_{t_2-t_1}(x_2-x_1) dt_1\ldots dt_n,
\end{align*}
and $T_n(t) = \{ (t_1,\ldots,t_n): 0<t_1<\ldots<t_n<t \}$. The necessary and sufficient condition for the existence of the solution is that this series converges in $L^2(\Omega)$, i.e.
\begin{equation}
\label{series-conv}
\sum_{n \geq 1}n! \|\widetilde{f}_n(\cdot,x;t)\|_{\cP_0^{\otimes n}}^2<\infty.
\end{equation}
Under this condition, the solution is unique, and
\[
\bE|u(t,x)|^2=1+\sum_{n\geq 1}n! \|\widetilde{f}_n(\cdot,x;t)\|_{\cP_0^{\otimes n}}^2.
\]

\section{The Regular Case}
\label{section-regular}
In this section we consider Case (I) (the regular case). One of the most important properties which is specific to this is case is a maximal principle:
if $\gamma=\cF \mu$ is a non-negative and non-negative-definite function on $\bR^d$, and $\mu$ satisfies Dalang's condition \eqref{D-cond}, then
\begin{align}
\label{max-principle}
	\sup_{\eta \in \bR^d} \int_{\bR^d} |\cF h(\xi+\eta)|^2 \mu(d\xi)
	= \int_{\bR^d} |\cF h(\xi)|^2 \mu(d\xi),
\end{align}
for any non-negative function $h$ such that $h \in \cP_0 \cap L^1(\bR^d)$ or $\cF h \geq 0$ (see Lemma 2.6 of \cite{BNQZ} and Lemma 3.6 of \cite{CDST}). In particular, this holds for $h(t)=e^{-t|\xi|^2}$ for any $t>0$.

\subsection{Existence of Solution}

In this section, we examine the question of existence of solution.

For any $t>0$, we denote $h_0(t)=1$ and
\begin{align}
\label{def-hn}
h_n(t)=
\int_{T_n(t)} \int_{(\bR^d)^n} \prod_{j=1}^{n} e^{-(t_{j+1}-t_j)|\xi_j|^2} \mu(d\xi_1) \ldots \mu(d\xi_n)d\pmb{t} \quad \mbox{for all} \quad n\geq 1,
\end{align}
where $\pmb{t}=(t_1,\ldots,t_n)$ and we let $t_{n+1}=t$.
We recall the following result.

\begin{lemma}[Lemma 3.8 of \cite{BC}]
\label{Lem-3.8}
Under condition (D), for any $t>0$ and $\gamma >0$,
\begin{align*}
H(t;\gamma):=\sum_{n\ge0} \gamma^n h_n(t) <\infty \quad \mbox{and} \quad
\widetilde{H}(t;\gamma):=\sum_{n\ge0} \sqrt{\gamma^n h_n(t)} <\infty.
\end{align*}
\end{lemma}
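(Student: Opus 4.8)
The plan is to integrate out the spatial variables first, reduce $h_n(t)$ to an $n$-fold time-convolution, and then control that convolution after inserting an exponential weight $e^{-\beta s}$ calibrated so that Dalang's condition \eqref{D-cond} is exactly what makes the relevant $L^1$-norm finite. First I would do the spatial integration: since $\prod_{j=1}^n e^{-(t_{j+1}-t_j)|\xi_j|^2}$ factorizes over $\xi_1,\dots,\xi_n$ and $\mu^{\otimes n}$ is a product measure, Tonelli's theorem gives
\[
\int_{(\bR^d)^n}\prod_{j=1}^n e^{-(t_{j+1}-t_j)|\xi_j|^2}\,\mu(d\xi_1)\cdots\mu(d\xi_n)=\prod_{j=1}^n g(t_{j+1}-t_j),\qquad g(s):=\int_{\bR^d}e^{-s|\xi|^2}\mu(d\xi),
\]
so that $h_n(t)=\int_{T_n(t)}\prod_{j=1}^n g(t_{j+1}-t_j)\,d\pmb{t}$ with $t_{n+1}=t$.

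Next I would introduce the weight. For $\beta>0$ set $\tilde g(s):=e^{-\beta s}g(s)$ and $N_\beta:=\int_{\bR^d}(\beta+|\xi|^2)^{-1}\mu(d\xi)$. By Tonelli,
\[
\int_0^\infty \tilde g(s)\,ds=\int_{\bR^d}\int_0^\infty e^{-(\beta+|\xi|^2)s}\,ds\,\mu(d\xi)=N_\beta,
\]
and condition \eqref{D-cond} yields $N_\beta\le\max(1,\beta^{-1})\int_{\bR^d}(1+|\xi|^2)^{-1}\mu(d\xi)<\infty$ for every $\beta>0$, with $N_\beta\to 0$ as $\beta\to\infty$ by dominated convergence (dominated by $(1+|\xi|^2)^{-1}$ once $\beta\ge1$). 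Thus $\tilde g\in L^1(0,\infty)$ with $\|\tilde g\|_{L^1}=N_\beta$, and for non-negative $L^1$ functions the $n$-fold convolution satisfies $\int_0^\infty (\tilde g^{*n})(u)\,du=N_\beta^n$.

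The key estimate then comes from the substitution $s_0=t_1$, $s_j=t_{j+1}-t_j$ ($1\le j\le n$), under which $\sum_{j=1}^n s_j=t-s_0\le t$, so that $\prod_{j=1}^n g(s_j)=e^{\beta(t-s_0)}\prod_{j=1}^n\tilde g(s_j)\le e^{\beta t}\prod_{j=1}^n\tilde g(s_j)$. Integrating the free variable $s_0=t_1$ over $(0,t)$ and recognizing that the remaining ordered integral in $t_2,\dots,t_n$ is precisely $(\tilde g^{*n})(t-t_1)$ gives
\[
h_n(t)\le e^{\beta t}\int_0^t(\tilde g^{*n})(u)\,du\le e^{\beta t}\,N_\beta^{\,n}.
\]
Finally I would fix $\beta$ large enough that $\gamma N_\beta<1$ (possible since $N_\beta\to0$); then both series are dominated by convergent geometric series, $H(t;\gamma)\le e^{\beta t}\sum_{n\ge0}(\gamma N_\beta)^n<\infty$ and $\widetilde H(t;\gamma)\le e^{\beta t/2}\sum_{n\ge0}\big(\sqrt{\gamma N_\beta}\big)^n<\infty$, with the $n=0$ term $h_0=1$ fitting the bound trivially.

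The main obstacle is that $g(s)$ is typically unbounded as $s\downarrow 0$ (indeed $g(0^+)=\mu(\bR^d)$ may be infinite), so the naive route of bounding $g$ by a constant and using $\mathrm{vol}(T_n(t))=t^n/n!$ is unavailable. The device that overcomes this is the exponential weight: it converts the one functional that stays finite under \eqref{D-cond}, namely $\int_{\bR^d}(\beta+|\xi|^2)^{-1}\mu(d\xi)$, into $\|\tilde g\|_{L^1}$, and the freedom to enlarge $\beta$ pushes $N_\beta$ below $\gamma^{-1}$, securing geometric convergence; the remaining measurability and Tonelli steps are routine.
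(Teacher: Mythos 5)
Your proof is correct, and it takes a genuinely different route from the one the paper relies on. Note that the paper does not prove this lemma at all: it is quoted as Lemma 3.8 of \cite{BC}, and the machinery the paper (and \cite{BC}, via Lemma 3.3 of \cite{HHNT}) uses for bounds of this kind is frequency splitting, i.e. the estimate $h_n(t) \leq \sum_{\ell=0}^{n} \binom{n}{\ell} \frac{C_N^{n-\ell}}{\ell!}(tD_N)^{\ell}$ of \eqref{norm-fn-white}, with $C_N=\int_{|\xi|>N}|\xi|^{-2}\mu(d\xi)\to 0$ and $D_N=\mu(\{|\xi|\leq N\})$, followed by choosing $N$ large (the same device reappears in the proofs of Theorems \ref{main-th3} and \ref{Thm-D2-bound}, where $N$ is chosen so that $32TC_N<1$, etc.). You instead work in the time domain: the factorization of the spatial integral is legitimate because each factor in \eqref{def-hn} involves a single $\xi_j$ (unlike $J_n$ in \eqref{def-Jn}, which involves partial sums), Tonelli applies throughout by non-negativity, the identification of the ordered $dt_2\cdots dt_n$ integral with $\tilde g^{*n}(t-t_1)$ is the standard simplex--convolution correspondence, and the exponential weight $e^{-\beta s}$ turns Dalang's condition \eqref{D-cond} into $\|\tilde g\|_{L^1}=N_\beta=\int_{\bR^d}(\beta+|\xi|^2)^{-1}\mu(d\xi)<\infty$ with $N_\beta\to 0$ as $\beta\to\infty$, yielding the clean geometric bound $h_n(t)\leq e^{\beta t}N_\beta^n$ and hence both $H(t;\gamma)<\infty$ and $\widetilde H(t;\gamma)<\infty$ at once. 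As for what each approach buys: the frequency-splitting bound carries more refined structural information (the separation into a small factor $C_N^{n-\ell}$ and a factorially decaying factor $(tD_N)^\ell/\ell!$), which the paper exploits in later arguments; your resolvent-type argument is shorter, self-contained, and closer in spirit to Dalang's original formulation of condition \eqref{D-cond}, at the price of producing only the cruder (but here entirely sufficient) bound $e^{\beta t}N_\beta^n$.
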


By Lemma 2.6 of \cite{chen-kim}, $h_n$ is non-decreasing for any $n\geq 1$. 

%

\begin{theorem} \label{Thm-existence-heat}
Assume that $\gamma$ is non-negative and non-negative definite and the spectral measure $\mu$ satisfies (D), or the noise is white. Then for any $t>0$ and $x \in \bR^d$,
\begin{equation}
\label{norm-fn}
\|f_n(\cdot,x;t)\|_{\cP_0^{\otimes n}}^2 \leq \frac{t^n}{n!}h_n(t).
\end{equation}
Consequently, equation \eqref{eq-heat} has a unique solution $u$, and for any $p \geq 2$ and $T>0$,
\begin{equation}
\label{p-mom-u}
\sup_{(t,x) \in [0,T] \times \bR^d}\|u(t,x)\|_p<\infty.
\end{equation}
\end{theorem}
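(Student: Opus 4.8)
The plan is to prove the kernel bound \eqref{norm-fn} first, since the $L^2(\Omega)$-convergence of the chaos series \eqref{series-conv} and the uniform moment estimate \eqref{p-mom-u} both follow from it together with Lemma~\ref{Lem-3.8}. To establish \eqref{norm-fn}, I would begin by writing the $\cP_0^{\otimes n}$-norm of the \emph{symmetrized} kernel in the spectral domain. Using the definition of $\langle\cdot,\cdot\rangle_{\cP_0}$ via $\mu$, the tensor-product inner product of the (unsymmetrized) kernel $f_n(\cdot,x;t)$ becomes an integral over $(\bR^d)^n$ of $|\cF f_n(\cdot,x;t)(\xi_1,\dots,\xi_n)|^2\,\mu(d\xi_1)\cdots\mu(d\xi_n)$, where the Fourier transform is taken in the first $n$ spatial variables. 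Since each $p_{t_{j+1}-t_j}$ has Fourier transform $\exp(-\tfrac{1}{2}(t_{j+1}-t_j)|\cdot|^2)$, a direct computation from the convolution structure of $f_n$ shows that $\cF f_n$ factors, on the simplex $T_n(t)$, into a product of Gaussian factors in the \emph{partial sums} $\xi_1+\cdots+\xi_j$.

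The crucial step is controlling these shifted Gaussian factors. After taking the modulus squared and integrating over the time simplex, one is left with an integral whose integrand involves $\exp\big(-(t_{j+1}-t_j)|\xi_1+\cdots+\xi_j|^2\big)$. Here the maximal principle \eqref{max-principle} is exactly the tool needed: treating the variables one at a time and using that $h(\xi)=e^{-t|\xi|^2}$ satisfies $\cF h\geq 0$, the inequality \eqref{max-principle} lets me replace each shifted argument $\xi_1+\cdots+\xi_j$ by $\xi_j$ (absorbing the shift as the $\eta$ in the supremum), thereby decoupling the spectral integrals and bounding the result by the unshifted product that defines $h_n(t)$. Applying \eqref{rough-bound} to pass from the symmetrized to the unsymmetrized kernel, and accounting for the $\tfrac{1}{n!}$ coming from integrating over $T_n(t)$ versus the full cube (equivalently, the volume of the simplex), yields precisely $\|f_n(\cdot,x;t)\|_{\cP_0^{\otimes n}}^2\leq \tfrac{t^n}{n!}h_n(t)$.

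Granting the bound \eqref{norm-fn}, I would finish as follows. Summing $n!\,\|\widetilde f_n(\cdot,x;t)\|_{\cP_0^{\otimes n}}^2\leq n!\,\|f_n(\cdot,x;t)\|_{\cP_0^{\otimes n}}^2\leq t^n h_n(t)$ over $n\geq 1$ gives $\sum_{n\geq 1}n!\,\|\widetilde f_n\|_{\cP_0^{\otimes n}}^2\leq H(t;t)-1<\infty$ by Lemma~\ref{Lem-3.8} with $\gamma=t$, which verifies \eqref{series-conv} and hence, by the discussion preceding the theorem, establishes existence and uniqueness of the solution $u$ with $\bE|u(t,x)|^2=1+\sum_{n\geq1}n!\|\widetilde f_n\|^2_{\cP_0^{\otimes n}}$. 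For the $p$-th moment bound \eqref{p-mom-u}, I would invoke hypercontractivity on Wiener chaos: for a fixed chaos order $n$, all $L^p(\Omega)$ norms are equivalent to the $L^2(\Omega)$ norm up to a factor $(p-1)^{n/2}$, so $\|u(t,x)\|_p\leq \sum_{n\geq0}(p-1)^{n/2}\big(n!\|\widetilde f_n(\cdot,x;t)\|^2_{\cP_0^{\otimes n}}\big)^{1/2}\leq \sum_{n\geq0}\sqrt{(p-1)^n\,t^n h_n(t)}=\widetilde H\big(t;(p-1)t\big)$, which is finite by the second assertion of Lemma~\ref{Lem-3.8}. Since $h_n$ is non-decreasing in $t$, the bound is uniform over $[0,T]$, and it is already independent of $x$, giving \eqref{p-mom-u}.

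The main obstacle is the spectral estimate in the second paragraph: making the iterated application of the maximal principle \eqref{max-principle} rigorous requires carefully fixing all but one spectral variable at each stage so that the function being tested is genuinely of the admissible form (non-negative with $\cF h\geq 0$), and tracking that the shift $\eta=\xi_1+\cdots+\xi_{j-1}$ is handled correctly as the supremum is taken. Everything else is bookkeeping with Gaussian Fourier transforms, the simplex volume, and the series summation.
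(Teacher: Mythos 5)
Your proposal follows essentially the same route as the paper's proof: the spectral representation of $\|f_n(\cdot,x;t)\|_{\cP_0^{\otimes n}}^2$, Cauchy--Schwarz over the time simplex producing the factor $t^n/n!$, the iterated maximal principle \eqref{max-principle} to remove the shifts $\xi_1+\cdots+\xi_{j-1}$ and bound $J_n(t)$ by $h_n(t)$, then \eqref{rough-bound}, Lemma~\ref{Lem-3.8} and hypercontractivity exactly as in the paper (with the uniform bound over $[0,T]$ from the monotonicity of $h_n$). The only slight imprecision is invoking \eqref{rough-bound} when deriving \eqref{norm-fn} itself, which concerns the unsymmetrized kernel and does not need it; you use it correctly afterwards when summing the chaos series.
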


\begin{proof}
The Fourier transform of $f_n(\cdot,x;t)$ is given by:
\[
\cF f_n(\cdot,x;t)(\xi_1,\ldots,\xi_n)=e^{-i(\xi_1+\ldots+\xi_n)\cdot x} \int_{T_n(t)} \prod_{j=1}^{n}e^{-\frac{1}{2}(t_{j+1}-t_j)|\xi_1+\ldots+\xi_j|^2}d\pmb{t},
\]
where $\pmb{t}=(t_1,\ldots,t_n)$ and $t_{n+1}=t$.
By Cauchy-Schwarz inequality,
\begin{equation}
\label{Jn-CS}
\|f_n(\cdot,x;t)\|_{\cP_0^{\otimes n}}^2 =\int_{(\bR^d)^n}|\cF f_n(\cdot,x;t)(\xi_1,\ldots,\xi_n)|^2 \mu(d\xi_1)\ldots \mu(d\xi_n) \leq \frac{t^n}{n!}J_n(t),
\end{equation}
where
\begin{equation}
\label{def-Jn}
J_n(t)=\int_{T_n(t)}\int_{(\bR^d)^n} \prod_{j=1}^{n}e^{-(t_{j+1}-t_j)|\xi_1+\ldots+\xi_j|^2}\mu(d\xi_1)\ldots \mu(d\xi_n) d\pmb{t}.
\end{equation}
By the maximal principle \eqref{max-principle},
\begin{equation}
\label{Jn-hn}
J_n(t) \leq h_n(t).
\end{equation}
Relation  \eqref{norm-fn} follows.
By the rough bound \eqref{rough-bound} and Lemma \ref{Lem-3.8}, we have:
\[
\sum_{n\geq 1}n! \|\widetilde{f}_n(\cdot,x;t)\|_{\cP_0^{\otimes n}}^2 \leq \sum_{n\geq 1}n! \|f_n(\cdot,x;t)\|_{\cP_0^{\otimes n}}^2 \leq \sum_{n\geq 0}t^n J_n(t) \leq \sum_{n\geq 0}t^n h_n(t)=H(t,t)<\infty.
\]
This proves the existence and uniqueness of the solution. By Minkowski's inequality and hypercontractivity,
\[
\|u(t,x)\|_p \leq \sum_{n\geq 0}(p-1)^{n/2}(n!)^{1/2}\|\widetilde{f}_n(\cdot,x;t)\|_{\cP_0^{\otimes n}} \leq \sum_{n\geq 0}(p-1)^{n/2} t^{n/2}\sqrt{h_n(t)}=\widetilde{H}(t;(p-1)t).
\]
The last term is bounded by $\widetilde{H}(T;(p-1)T)$, for any $t \in [0,T]$ and $x \in \bR^d$.
\end{proof}

\begin{remark}[Comparison with the white noise in time]
{\rm
Consider the parabolic Anderson model with Gaussian noise $\fX$ which is white noise in time and has the same spatial covariance structure as $W$:
\begin{align}
\label{pam1}
	\begin{cases}
		\dfrac{\partial v}{\partial t} (t,x)
		= \Delta v(t,x) + v(t,x) \dot{\fX}(t,x), \
		t>0, \ x \in \bR^d \\
		v(0,x) = 1,
	\end{cases}
\end{align}
More precisely, $\fX=\{\fX(\varphi);\varphi \in \cD(\bR_{+} \times \bR^d)\}$ is a zero-mean Gaussian process with covariance
\[
\bE[\fX(\varphi)\fX(\psi)]=\int_{\bR_{+}}
\int_{(\bR^d)^2}\gamma(x-y)\varphi(t,x)\psi(t,y)dxdydt=:\langle \varphi,\psi \rangle_{\cH_0},
\]
We let $\cH_0$ be the completion of $\cD(\bR_{+} \times \bR^d)$ with respect to the inner product $\langle \varphi,\psi \rangle_0$. Then $\cH_0$ is isomorphic to $L^2(\bR_{+};\cP_0)$.
If (D) holds, equation \eqref{pam1} has a unique solution which has the chaos expansion:
\[
v(t,x)=1+\sum_{n\geq 1}I_n^{\fX}(f_n(\cdot,t,x))
\]
where $I_n^{\fX}$ is the multiple integral with respect to $\fX$ and the kernel $f_n(\cdot,t,x)$ is given by
\[
f_n(t_1,x_1,\ldots,t_n,x_n,t,x)=p_{t-t_n}(x-x_n)\ldots p_{t_2-t_1}(x_2-x_1)1_{\{0<t_1<\ldots<t_n<t\}}.
\]
It can be proved that $\|f_n(\cdot,t,x)\|_{\cH_0^{\otimes n}}^2=J_n(t)$.
Using \eqref{Jn-hn}, followed by Lemma 3.3 of \cite{HHNT}, we obtain the following inequality:
\begin{equation}
\label{norm-fn-white}
\|f_n(\cdot,t,x)\|_{\cH_0^{\otimes n}}^2 \leq h_n(t) \leq \sum_{\ell=0}^{n} \binom{n}{\ell} \frac{C_N^{n-\ell}}{\ell!}(tD_N)^{\ell},
\end{equation}
for any $N>0$, where
\[
C_N=\int_{|\xi|> N}\frac{1}{|\xi|^2}\mu(d\xi) \quad \mbox{and} \quad
D_N=\int_{\{|\xi|\leq N\}}\mu(d\xi).
\]
Note that $\bE|v(t,x)|^2=\sum_{n\geq 0} J_n(t)$, whereas $\bE|u(t,x)|^2 \leq \sum_{n\geq 0}t^n J_n(t)$.}
\end{remark}

\subsection{Estimates on the Malliavin derivatives}

In this subsection, we give the chaos expansions for the first and second Malliavin derivatives of the solution to \eqref{eq-heat}. For each of these derivatives, we establish a moment estimate which shows that the first term in the chaos expansion dominates the rest.

We begin with the first Malliavin derivative. We fix $t>0$ and $x \in \bR^d$. We will prove that for any $z \in \bR^d$, we have the following chaos expansion:
\begin{equation}
\label{D-series}
	D_{z}u(t,x)
	=\sum_{n\geq 1} n I_{n-1}(\widetilde{f}_n(\cdot,z,x;t))=:\sum_{n\geq 1}A_n(z,x;t) \quad \mbox{in $L^2(\Omega)$}.
\end{equation}

In order to do this, we first note that, as in the case of the wave equation (studied in \cite{BY}), we have the decomposition:
\begin{equation} \label{decomp'}
	\widetilde{f}_n(\cdot,z,x;t)
	=\frac{1}{n}\sum_{j=1}^n h_j^{(n)}(\cdot,z,x;t),
\end{equation}
where $h_j^{(n)}(\cdot,z,x;t)$ is the symmetrization of the function $f_j^{(n)}(\cdot,z,x;t)$ given by:
\begin{align*}
	& f_j^{(n)}(x_1,\ldots,x_{n-1},z,x;t) = f_n(x_1,\ldots,x_{j-1},z,x_{j},\ldots,x_{n-1},x;t)  \\
	& \quad =\int_{\{0<t_1<\ldots<t_{j-1}<r<t_j<\ldots<t_{n-1}<t\}}
	p_{t-t_{n-1}}(x-x_{n-1}) \ldots p_{t_j-r}(x_j-z) p_{r-t_{j-1}}(z-x_{j-1}) \ldots \\
	& \qquad \qquad \qquad \qquad \qquad p_{t_2-t_1}(x_2-x_1) dt_1 \ldots dt_{n-1}dr
\end{align*}

We will use the following functions:
\begin{align}
\label{def-fk}
f_n(t_1,x_1,\ldots,t_n,x_n,t,x)&=p_{t-t_n}(x-x_n)\ldots p_{t_2-t_1}(x_2-x_1) \\
\label{def-gk}
g_n(t_1,x_1,\ldots,t_n,x_n,r,z,t,x)&=p_{t-t_n}(x-x_n)\ldots p_{t_2-t_1}(x_2-x_1)p_{t_1-r}(x_1-z),
\end{align}
We use the following notational convention: for any function $f: (\bR_{+} \times \bR^d)^n \to \bR$, we denote
\[
f(\pmb{t_n},\pmb{x_n}):=f(t_1,x_1,\ldots,t_n,x_n).
\]
where $\pmb{t_n}=(t_1,\ldots,t_n)$ and $\pmb{x_n}=(x_1,\ldots,x_n)$.
We denote
$\pmb{t_{j:n}}=(t_{j},\ldots,t_{n})$ for any $j\leq n$.


We let $g_0(r,z,t,x)=p_{t-r}(x-z)$. The following decomposition holds:
\begin{align}
\label{decomp-fjn}
f_j^{(n)}(\pmb{x_{n-1}},z,x;t)&=\int_0^t
\left(\int_{T_{j-1}(r)} f_{j-1}
	(\pmb{t_{j-1}},\pmb{x_{j-1}},r,z) d\pmb{t_{j-1}} \right)\\
\nonumber
& \quad \left(\int_{\{r<t_j<\ldots<t_{n-1}<t\}}
g_{n-j}(\pmb{t_{j:n-1}},\pmb{x_{j:n-1}},r,z,t,x)d\pmb{t_{j:n-1}}\right)
dr.
\end{align}

Recall that $\cH_0=L^2(\bR_{+};\cP_0)$ is the Hilbert space associated with the white noise in time. By Cauchy-Schwarz inequality and \eqref{norm-fn-white},
\begin{align}
\nonumber
	\left(\int_{T_n(t)} \|f_{n}
	(\pmb{t_{n}},\bullet,t,x)\|_{\cP_0^{\otimes n}}d\pmb{t_n}\right)^2  & \leq \dfrac{t^n}{n!} \|f_{n}(\cdot,t,x)\|_{\cH_0^{\otimes n}}^2 \leq \dfrac{t^n}{n!}\sum_{\ell=0}^{n}\binom{n}{\ell}\frac{C_N^{n-\ell}}{\ell!}
(tD_N)^{\ell}\\
\label{in1}
& \leq \dfrac{(2t)^{n}}{n!}\sum_{\ell=0}^{n}\frac{C_N^{n-\ell}}{\ell!}
(tD_N)^{\ell}.
\end{align}
A similar inequality holds for $g_n$.
To see this, we recall the following estimate which was proved in \cite{NXZ}: 
\begin{equation}
\label{3-17-NXZ}
\|g_n(\cdot,r,z,t,x)\|_{\cH_0}^{2} \leq p_{t-r}^2(x-z) \sum_{\ell=0}^{n} \binom{n}{\ell} \frac{(4C_N)^{n-\ell}}{\ell!}[(t-r)D_N]^{\ell}
\end{equation}
(see the proof of relation (3.17) of \cite{NXZ}). By convention, the sum is equal to $1$ if $n=0$. Applying Cauchy-Schwarz inequality, we obtain:
\begin{align}
\nonumber
	& \left(\int_{\{r<t_1<\ldots<t_{n}<t\}} \|g_{n}
	(\pmb{t_{n}},\bullet,r,z,t,x)\|_{\cP_0^{\otimes n}} d\pmb{t_{n}}\right)^2  \leq \dfrac{(t-r)^{n}}{n!}
	\|g_{n}(\cdot,r,z,t,x)\|_{\cH_0^{\otimes n}}^2 \\
\nonumber
& \quad \quad \quad \leq \dfrac{(t-r)^{n}}{n!}p_{t-r}^2(x-z)\sum_{\ell=0}^{n}\binom{n}{\ell}
\frac{(4C_N)^{n-\ell}}{\ell!}
[(t-r)D_N]^{\ell} \\
\label{in2}
& \quad \quad \quad \leq \dfrac{(2t)^{n}}{n!}p_{t-r}^2(x-z)\sum_{\ell=0}^{n}
\frac{(4C_N)^{n-\ell}}{\ell!}
(tD_N)^{\ell}.
\end{align}

Here is the first result of this section.

\begin{theorem}
\label{main-th3}
Suppose that $\gamma=\cF \mu$ is a non-negative locally-integrable function and $\mu$ satisfies (D), or $d=1$ and the noise is white. Then the series in \eqref{D-series} converges in $L^2(\Omega)$, and for any $p\geq 2$, $T>0$, $t \in [0,T]$ and $x,z \in \bR^d$,
\begin{equation}
\label{D-bound}
	\|D_{z}u(t,x)\|_p \leq C_T \int_0^t p_{t-r}(x-z)dr,
\end{equation}
where the constant $C_T$ depends on $(T,p,\gamma)$.
\end{theorem}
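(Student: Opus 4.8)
The plan is to bound $\|D_z u(t,x)\|_p$ by summing the $L^p$-norms of the chaos components $A_n(z,x;t)=n\,I_{n-1}(\widetilde f_n(\cdot,z,x;t))$ and reducing each to an $L^2$ computation via hypercontractivity. Since $A_n$ lives in the $(n-1)$-th Wiener chaos, hypercontractivity gives $\|A_n\|_p\le (p-1)^{(n-1)/2}\|A_n\|_2$, and by the isometry $\|A_n\|_2=n\sqrt{(n-1)!}\,\|\widetilde f_n(\cdot,z,x;t)\|_{\cP_0^{\otimes(n-1)}}$. Using the decomposition \eqref{decomp'}, the triangle inequality and the contraction property \eqref{rough-bound} of symmetrization, I would bound $\|\widetilde f_n(\cdot,z,x;t)\|_{\cP_0^{\otimes(n-1)}}\le \frac1n\sum_{j=1}^n \|f_j^{(n)}(\cdot,z,x;t)\|_{\cP_0^{\otimes(n-1)}}$, so that the prefactor $n$ cancels and
\[
\|A_n\|_p\le (p-1)^{(n-1)/2}\sqrt{(n-1)!}\sum_{j=1}^n \|f_j^{(n)}(\cdot,z,x;t)\|_{\cP_0^{\otimes(n-1)}}.
\]

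Next I would estimate each term using the factorized decomposition \eqref{decomp-fjn}. The two bracketed factors there depend on disjoint blocks of spatial variables ($x_1,\dots,x_{j-1}$ and $x_j,\dots,x_{n-1}$), so the $\cP_0^{\otimes(n-1)}$-norm of their product splits as a product of a $\cP_0^{\otimes(j-1)}$-norm and a $\cP_0^{\otimes(n-j)}$-norm; pulling the norm inside the $dr$-integral by the generalized Minkowski inequality and then applying \eqref{in1} to the $f_{j-1}$ block (with terminal point $z$) and \eqref{in2} to the $g_{n-j}$ block, I obtain
\[
\|f_j^{(n)}(\cdot,z,x;t)\|_{\cP_0^{\otimes(n-1)}}\le \tilde a_{j-1}\,\tilde b_{n-j}\int_0^t p_{t-r}(x-z)\,dr,
\]
where $\tilde a_k^2=\frac{(2t)^k}{k!}\sum_{\ell=0}^k \frac{C_N^{k-\ell}}{\ell!}(tD_N)^\ell$ and $\tilde b_m^2=\frac{(2t)^m}{m!}\sum_{\ell=0}^m \frac{(4C_N)^{m-\ell}}{\ell!}(tD_N)^\ell$. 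Here the factor $p_{t-r}^2(x-z)$ in \eqref{in2} produces $p_{t-r}(x-z)$ after taking square roots, and I replace $r$ by $t$ in the polynomial prefactors since all terms are non-negative. The common kernel $\int_0^t p_{t-r}(x-z)\,dr$ then factors out of the whole estimate.

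It remains to sum the resulting scalar double series. Writing $M=n-1$, $k=j-1$, $m=n-j$, the bound becomes $\big(\int_0^t p_{t-r}(x-z)\,dr\big)\,S$ with $S=\sum_{M\ge0}(p-1)^{M/2}\sqrt{M!}\sum_{k+m=M}\tilde a_k\tilde b_m$. Using $\sqrt{(k+m)!}\le 2^{(k+m)/2}\sqrt{k!}\,\sqrt{m!}$, this convolution factorizes into a product $S\le A\cdot B$, where $A=\sum_{k\ge0}\theta^k\,(\sum_{\ell=0}^k \frac{C_N^{k-\ell}}{\ell!}(tD_N)^\ell)^{1/2}$ with $\theta=\sqrt{4t(p-1)}$, and $B$ is the analogous series with $4C_N$ in place of $C_N$ (absorbing the $(2t)^{k/2}$ from $\tilde a_k$ together with $(2(p-1))^{k/2}$ into $\theta^k$). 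Applying the subadditivity $\big(\sum_\ell c_\ell\big)^{1/2}\le \sum_\ell c_\ell^{1/2}$ to the inner sum turns it into a Cauchy product, so that $A\le \big(\sum_j (\theta\sqrt{C_N})^j\big)\big(\sum_\ell (\theta\sqrt{tD_N})^\ell/\sqrt{\ell!}\big)$. The second factor is an entire series that converges for every argument, so the only real constraint comes from the geometric first factor, namely $\theta^2 C_N<1$ for $A$ and $\theta^2(4C_N)<1$ for $B$, i.e. $C_N<\frac1{16t(p-1)}$.

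This last point is where the main obstacle lies: guaranteeing the geometric factors are summable. Here I would invoke Dalang's condition \eqref{D-cond}, which forces $C_N=\int_{|\xi|>N}|\xi|^{-2}\mu(d\xi)\to0$ as $N\to\infty$ while $D_N=\int_{|\xi|\le N}\mu(d\xi)<\infty$ for each fixed $N$; choosing $N=N(T,p)$ large enough that $C_N<\frac1{16T(p-1)}$ makes both $A$ and $B$ finite, uniformly for $t\in[0,T]$, and yields a constant $C_T=C_T(T,p,\gamma)$. Taking $p=2$ and using that the $A_n$ lie in mutually orthogonal chaoses gives $\sum_n\|A_n\|_2^2<\infty$, hence the $L^2(\Omega)$-convergence of the series \eqref{D-series}; the general $p$-norm estimate is exactly \eqref{D-bound}. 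The white-noise case $d=1$ is handled identically, with $\mu(d\xi)=(2\pi)^{-1}d\xi$, for which the same splitting into $C_N,D_N$ (and Dalang's condition) holds.
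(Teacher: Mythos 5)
Your proof is correct and follows essentially the same route as the paper's: the same starting bound on $\|A_n\|_2$ via the decomposition \eqref{decomp'}, the same factorized representation \eqref{decomp-fjn} with Minkowski's inequality, the same key estimates \eqref{in1} and \eqref{in2}, the factoring out of the kernel $\int_0^t p_{t-r}(x-z)\,dr$, and Dalang's condition to make $C_N$ small. The one genuine difference is the final summation of the scalar series. The paper splits into two cases: if $C_N=0$ for large $N$, the inner sums collapse to single terms with factorial decay $\frac{(8t^2D_N)^{n-1}}{[(n-1)!]^2}$; otherwise the sums are bounded by $C_N^{j-1}e^{C_N^{-1}tD_N}$ (and its analogue with $4C_N$), and one needs $32TC_N<1$. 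Your inequality $\sqrt{(k+m)!}\le 2^{(k+m)/2}\sqrt{k!}\,\sqrt{m!}$ combined with subadditivity of the square root factorizes the double series into a geometric factor times an entire-series factor, which handles both cases at once under the single condition $16T(p-1)C_N<1$ (trivially satisfied when $C_N=0$). A side benefit of your version: the choice of $N$ explicitly depends on $p$ as well as $T$, which is in fact needed for the hypercontractivity step to produce a convergent series in the paper's Case 2, where this dependence is left implicit behind the phrase ``as in Case 1 above''; since the constant $C_T$ is allowed to depend on $(T,p,\gamma)$, this is harmless in both treatments.
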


\begin{proof}
We first prove the convergence of series \eqref{D-series} in $L^2(\Omega)$. By relation (26) of \cite{BY} (which obviously continues to hold for the heat equation), we have:
\begin{equation}
\label{bound-An}
\bE|A_n(z,x;t)|^2 \leq n! \sum_{j=1}^{n} \|f_j^{(n)}(\cdot,z,x;t)\|_{\cP_0^{\otimes (n-1)}}^2.
\end{equation}

We estimate separately the terms of this sum.
 By \eqref{decomp-fjn} and Minkowski's inequality,
\begin{align}
\label{eq-norm-f_j^n-heat}
	& \|f_j^{(n)}(\cdot,z,x;t)\|_{\cP_0^{\otimes (n-1)}}
	\leq \int_0^t
	\left(\int_{T_{j-1}(r)} \|f_{j-1}
	(\pmb{t_{j-1}},\bullet,r,z)\|_{\cP_0^{\otimes (j-1)}} d\pmb{t_{j-1}}\right) \nonumber \\
	& \qquad \qquad \qquad \qquad \qquad \left(\int_{\{r<t_j<\ldots<t_{n-1}<t\}}	\|g_{n-j}(\pmb{t_{j:n-1}},\bullet,r,z,t,x)\|_{\cP_0^{\otimes (n-j)}} d\pmb{t_{j:n-1}} \right)dr.
\end{align}
For each $r \in (0,t)$ fixed, we estimate separately each of the inner integrals above. (By convention, the first integral is equal 1 if $j=1$, and the second one is equal to $p_{t-r}(x-z)$ if $j=n$.)
We use \eqref{in1} for the first integral and \eqref{in2} for the second integral. We obtain:
\begin{align}
\nonumber
& \|f_j^{(n)}(\cdot,z,x;t)\|_{\cP_0^{\otimes (n-1)}}^2 \leq \left(\int_0^t p_{t-r}(x-z)dr \right)^2 \\
\label{fjn-step1}
& \quad
\frac{(2t)^{n-1}}{(j-1)!(n-j)!}
\left(\sum_{\ell=0}^{j-1}\frac{C_N^{j-1-\ell}}{\ell!}
(tD_N)^{\ell}\right) \left(\sum_{\ell=0}^{n-j}
\frac{(4C_N)^{n-j-\ell}}{\ell!}
(tD_N)^{\ell} \right).
\end{align}
We consider two cases.

{\em Case 1.} Suppose that there exists $N_0 \geq 1$ such that $C_{N_0}=0$. Then $C_N=0$ for all $N \geq N_0$. Let $N \geq N_0$ be arbitrary. Then the only non-zero terms in the two sums on the right-hand side of \eqref{fjn-step1} are those corresponding to $\ell=j-1$ (for the first sum), respectively $\ell=n-j$ (for the second sum). Therefore, relation \eqref{fjn-step1} becomes:
\begin{align*}
 \|f_j^{(n)}(\cdot,z,x;t)\|_{\cP_0^{\otimes (n-1)}}^2 \leq
 \left(\int_0^t p_{t-r}(x-z)dr \right)^2 \frac{(2t^2 D_N)^{n-1}}{[(j-1)!]^2 [(n-j)!]^2}
\end{align*}
Taking the sum for all $j=1,\ldots,n$ we obtain:
\begin{align*}
\sum_{j=1}^n \|f_j^{(n)}(\cdot,z,x;t)\|_{\cP_0^{\otimes (n-1)}}^2
& \leq \left(\int_0^t p_{t-r}(x-z)dr \right)^2 \frac{(2t^2 D_N)^{n-1}}{[(n-1)!]^2} \sum_{j=1}^{n-1}\binom{n-1}{j-1}^2\\
& \leq \left(\int_0^t p_{t-r}(x-z)dr \right)^2 \frac{(8t^2 D_N)^{n-1}}{[(n-1)!]^2},
\end{align*}
using the fact that $\sum_{j=1}^{n}\binom{n-1}{j-1}^2 \leq \left(\sum_{j=1}^{n}\binom{n-1}{j-1}\right)^2=4^{n-1}$.
Coming back to \eqref{bound-An}, we get:
\[
\bE|A_n(z,x;t)|^2 \leq n\left(\int_0^t p_{t-r}(x-z)dr \right)^2  \frac{(8t^2 D_N)^{n-1}}{(n-1)!}.
\]
This proves that $\sum_{n\geq 1}A_n(z,x;t)$ converges in $L^2(\Omega)$, and
\[
\bE|D_{z}u(t,x)|^2=\sum_{n\geq 1}\bE|A_n(z,x;t)|^2 \leq \left(\int_0^t p_{t-r}(x-z)dr \right)^2 \exp(16t^2 D_N).
\]
Relation \eqref{D-bound} follows by hypercontractivity:
\[
\|D_{z}u(t,x)\|_p \leq \sum_{n\geq 1}(p-1)^{(n-1)/2} \|A_n(z,x;t)\|_2 \leq C_{t,p} \left(\int_0^t p_{t-r}(x-z)dr \right),
\]
where $C_{t,p}=\exp(c (p-1)t^2 D_N)$ for some constant $c>0$.

\medskip
{\em Case 2.} Suppose that $C_{N}>0$ for all $N>0$. Bounding the two sums on the right-hand side of \eqref{fjn-step1} by $C_{N}^{j-1}e^{C_N^{-1}tD_N}$, respectively $(4C_N)^{n-j}e^{(4C_N)^{-1}tD_N}$, we obtain:
\[
\|f_j^{(n)}(\cdot,z,x;t)\|_{\cP_0^{\otimes (n-1)}}^2 \leq \left(\int_0^t p_{t-r}(x-z)dr \right)^2 \frac{(2t)^{n-1}}{(j-1)!(n-j)!}(4C_N)^{n-1}e^{2t C_N^{-1}D_N}.
\]
Taking the sum over $j=1,\ldots,n$ and using the fact that $\sum_{j=1}^{n}\binom{n-1}{j-1}=2^{n-1}$, we get:
\[
\sum_{j=1}^n\|f_j^{(n)}(\cdot,z,x;t)\|_{\cP_0^{\otimes (n-1)}}^2 \leq \left(\int_0^t p_{t-r}(x-z)dr \right)^2 \frac{(16t C_N)^{n-1}}{(n-1)!}e^{2t C_{N}^{-1}D_N}.
\]
Coming back to \eqref{bound-An} and using the fact that $n \leq 2^{n-1}$, we get:
\[
\bE|A_n(z,x;t)|^2 \leq \left(\int_0^t p_{t-r}(x-z)dr \right)^2 (32t C_N)^{n-1}
e^{2t C_{N}^{-1}D_N}.
\]
Due to condition (D), $\lim_{N \to \infty}C_N=0$. Hence, we can choose $N=N_T$ such that $$32TC_N<1.$$ Then for any $t \in [0,T]$, the series $\sum_{n\geq 1}A_n(z,x;t)$ converges in $L^2(\Omega)$, and
\[
\bE|D_zu(t,x)|^2=\sum_{n\geq 1}\bE|A_n(z,x;t)|^2 \leq \left(\int_0^t p_{t-r}(x-z)dr \right)^2 \frac{1}{1-32 TC_N}e^{2T C_{N}^{-1}D_N}.
\]
Relation \eqref{D-bound} follows by hypercontractivity, as in Case 1 above.

\end{proof}

\medskip
We now examine the second Malliavin derivative. We fix $t>0$ and $x \in \bR^d$.
We will show below that for any $w,z \in \bR^d$, we have the following chaos expansion:
\begin{equation}
\label{D2-series}
	D_{w,z}^2 u(t,x)
	=\sum_{n\geq 2} n(n-1) I_{n-2}(\widetilde{f}_n(\cdot,w,z,x;t))=:\sum_{n\geq 2}B_n(w,z,x;t) \quad \mbox{in} \quad L^2(\Omega).
\end{equation}	

As in the case of the wave equation (see \cite{BY}), we note that:
\[
 \widetilde{f}_n(\cdot,w,z,x;t)=\frac{1}{n(n-1)} \sum_{i,j=1,i\not=j}^{n}
 h_{ij}^{(n)}(\cdot,w,z,x;t),
\]
 where $h_{ij}^{(n)}(\cdot,w,z,x;t)$ is the symmetrization of the function
 $f_{ij}^{(n)}(\cdot,w,z,x;t)$ defined as follows. If $i<j$,
\begin{align*}
	& f_{ij}^{(n)}(x_1,\ldots,x_{n-2},w,z,x;t) = f_n(x_1, \ldots, x_{i-1}, w, x_i,	\ldots, x_{j-2}, z, x_{j-1}, \ldots, x_{n-2},x;t) \\
	=&	\int_{\{t_1< \ldots< t_{i-1}< \theta< t_i< \ldots< t_{j-2}< r< t_{j-1}< \ldots< t_{n-2}<t\}}
	p_{t-t_{n-2}}(x-x_{n-2}) \ldots p_{t_{j-1}-r}(x_{j-1}-z) \\
	& p_{r-t_{j-2}}(z-x_{j-2}) \ldots p_{t_i-\theta}(x_i-w)p_{\theta-t_{i-1}}(w-x_{i-1}) \ldots p_{t_2-t_1}(x_2-x_1) dt_1 \ldots dt_{n-2}drd\theta,
\end{align*}
and we have the decomposition:
\begin{align}
\label{dec-fij}
f_{ij}^{(n)}(\pmb{x_{n-2}},w,z,x;t)& =\int_{\{0<\theta<r<t\}}
\left(\int_{T_{i-1}(\theta)} f_{i-1} (\pmb{t_{i-1}},\pmb{x_{i-1}},\theta,w) d\pmb{t_{i-1}}\right) \\
\nonumber
& \quad \left(\int_{\{ \theta <t_i<\ldots< t_{j-2}<r \}}
g_{j-i-1}(\pmb{t_{i:j-2}},\pmb{x_{i:j-2}},\theta,w,r,z) d\pmb{t_{i:j-2}}\right) \\
\nonumber
& \quad \left(\int_{\{ r<t_{j-1}<\ldots<t_{n-2}<t \}}
g_{n-j}(\pmb{t_{j-1:n-2}},\pmb{x_{j-1:n-2}},r,z,t,x) \pmb{t_{j-i:n-2}}\right)d\theta dr.
\end{align}

If $j<i$,
\[
 f_{ij}^{(n)}(x_1,\ldots,x_{n-2},w,z,x;t)=f_n(x_1,\ldots,x_{j-1},z,x_{j},
\ldots,x_{i-2},w,x_{i-1},\ldots,x_{n-2},x;t)
\]
In both cases, $w$ is on position $i$ and $z$ is on position $j$.

After introducing these notations, we are now ready to prove the following result.

\begin{theorem}
\label{Thm-D2-bound}
Under the hypotheses of Theorem \ref{main-th3}, the series in \eqref{D2-series} converges in $L^2(\Omega)$, and for any $p \geq 2$, $T>0$, $t\in [0,T]$ and $x,w,z \in \bR^d$,
\begin{equation}
\label{bound-D2}
	\|D_{w,z}^2 u(t,x) \|_p \leq C_T \widetilde{f}_2(w,z,x;t),
\end{equation}
where $C_T>0$ is a constant that depends on $(T,p,\gamma)$.
\end{theorem}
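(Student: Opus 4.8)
The plan is to mirror the proof of Theorem~\ref{main-th3} step by step, now with two insertions instead of one, and to show that the leading term $B_2=2\widetilde f_2(w,z,x;t)$ dominates the series \eqref{D2-series}. First I would establish \eqref{D2-series} together with the estimate
\[
\bE|B_n(w,z,x;t)|^2 \leq n!\sum_{i\neq j}\|f_{ij}^{(n)}(\cdot,w,z,x;t)\|_{\cP_0^{\otimes(n-2)}}^2,
\]
the second-derivative analogue of \eqref{bound-An}. This follows from the identity $\widetilde f_n(\cdot,w,z,x;t)=\frac{1}{n(n-1)}\sum_{i\neq j}h_{ij}^{(n)}$, the rough bound \eqref{rough-bound} applied to each $h_{ij}^{(n)}=\widetilde{f_{ij}^{(n)}}$, and Cauchy--Schwarz over the $n(n-1)$ summands, using $[n(n-1)]^2(n-2)!\cdot\frac{1}{n(n-1)}=n!$; this is exactly relation (26) of \cite{BY}, which carries over verbatim to the heat equation.

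For the kernel estimate with $i<j$, I would insert the three-factor decomposition \eqref{dec-fij} and apply Minkowski's inequality to bound $\|f_{ij}^{(n)}\|_{\cP_0^{\otimes(n-2)}}$ by the integral over $\{0<\theta<r<t\}$ of the product of the $\cP_0$-norms of the three inner integrals. The first factor ($i-1$ free variables anchored at $(\theta,w)$) is controlled by \eqref{in1} with $t$ replaced by $\theta$; the third factor (a $g$-kernel anchored at $(r,z)$ and $(t,x)$) by \eqref{in2}, producing a factor $p_{t-r}(x-z)$; and the middle factor (a $g$-kernel anchored at $(\theta,w)$ and $(r,z)$) by \eqref{in2} after the relabeling $(r,z,t,x)\mapsto(\theta,w,r,z)$, producing a factor $p_{r-\theta}(z-w)$. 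Since the three finite sums $S_1,S_2,S_3$ arising from \eqref{in1}--\eqref{in2} are independent of $\theta,r$, they factor out and the remaining space-time integral collapses to
\[
\int_{\{0<\theta<r<t\}}p_{r-\theta}(z-w)\,p_{t-r}(x-z)\,d\theta\,dr = f_2(w,z,x;t) \leq 2\,\widetilde f_2(w,z,x;t),
\]
the last step using $\widetilde f_2=\tfrac12[f_2(w,z,x;t)+f_2(z,w,x;t)]$ with both summands non-negative. The case $j<i$ is identical with $w$ and $z$ interchanged, yielding $f_2(z,w,x;t)\leq 2\widetilde f_2(w,z,x;t)$. In both cases this leaves, in front of $[2\widetilde f_2(w,z,x;t)]^2$, a factor $(2t)^{n-2}$ divided by the product of three factorials $(\min(i,j)-1)!\,(|i-j|-1)!\,(n-\max(i,j))!$ (whose arguments sum to $n-2$) times $S_1S_2S_3$.

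The remaining work is the double sum over $n$ and over the pairs $(i,j)$, which I would split into the two cases of Theorem~\ref{main-th3}. In Case~1 ($C_{N_0}=0$) only the top term of each finite sum survives, and the multinomial identity $\sum_{a+b+c=n-2}\frac{1}{a!\,b!\,c!}=\frac{3^{n-2}}{(n-2)!}$ (together with $\sum_k x_k^2\leq(\sum_k x_k)^2$) yields $\bE|B_n|^2\leq C\,[\widetilde f_2(w,z,x;t)]^2\,n(n-1)\frac{(c\,t^2D_N)^{n-2}}{(n-2)!}$, a summable series. In Case~2 ($C_N>0$ for all $N$) I would bound each finite sum by an exponential times a power of $C_N$ and, using $\lim_{N\to\infty}C_N=0$ from \eqref{D-cond}, choose $N=N_T$ so that the resulting geometric-type series over $n$ converges for all $t\leq T$. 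Either way $\bE|D_{w,z}^2u(t,x)|^2\leq C_T[\widetilde f_2(w,z,x;t)]^2$, which proves the $L^2(\Omega)$-convergence of \eqref{D2-series}, and hypercontractivity then gives $\|D_{w,z}^2u(t,x)\|_p\leq\sum_{n\geq2}(p-1)^{(n-2)/2}\|B_n\|_2\leq C_T\widetilde f_2(w,z,x;t)$ exactly as in Theorem~\ref{main-th3}, with $C_T$ depending on $(T,p,\gamma)$.

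I expect the only genuine difficulty to be bookkeeping: correctly tracking the three factorial denominators and the three finite sums $S_1,S_2,S_3$ through the multinomial summation, and verifying in Case~2 that the constants close up after choosing $N$ large. The conceptual input --- the factorization \eqref{dec-fij}, the collapse of the time integral to $f_2$, and the domination $f_2\leq2\widetilde f_2$ --- is immediate once the first-derivative argument is in hand.
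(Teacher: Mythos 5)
Your proposal is correct and follows essentially the same route as the paper's proof: the bound \eqref{bound-Bn} via relation (26) of \cite{BY}, the three-factor decomposition \eqref{dec-fij} with Minkowski's inequality, the estimates \eqref{in1}--\eqref{in2} collapsing the time integral to $f_2$, the two-case analysis on $C_N$ with the same multinomial combinatorics, and the hypercontractivity finish. The only cosmetic difference is that you invoke $f_2(w,z,x;t)\leq 2\widetilde f_2(w,z,x;t)$ early, whereas the paper carries $f_2^2(w,z,x;t)$ and $f_2^2(z,w,x;t)$ separately and symmetrizes at the end.
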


\begin{proof}
As in the case of Theorem \ref{main-th3}, the main effort will be dedicated to proving that the series \eqref{D2-series} converges in $L^2(\Omega)$. To this end, we will use the following estimate, which is proved exactly as in the case of the wave equation (see \cite{BY}):
\begin{equation}
\label{bound-Bn}
	\bE |B_n(w,z,x;t)|^2
\leq  n! \sum_{i,j=1,i\not=j}^n \left\| f_{ij}^{(n)}(\cdot,w,z,x;t)) \right\|_{\cP_0^{\otimes (n-2)}}^2,
\end{equation}

We proceed with the estimation of $\| f_{ij}^{(n)}(\cdot,w,z,x;t)) \|_{\cP_0^{\otimes (n-2)}}^2$. Let $1\leq i<j\leq n$ be arbitrary.
By Minkowski's inequality and \eqref{dec-fij},
\begin{align} \label{ineq-fijn}
	& \big\| f_{ij}^{(n)}(\cdot,w,z,x;t) \big\|_{\cP_0^{\otimes (n-2)}}
	\leq  \int_{0<\theta<r<t}
	\left(\int_{T_{i-1}(\theta)} \big\| f_{i-1} (
\pmb{t_{i-1}},\bullet,\theta,w) \big\|_{\cP_0^{\otimes (i-1)}}  d\pmb{t_{i-1}} \right) \nonumber \\
	& \quad \quad \quad \left(\int_{\{\theta<t_i<\ldots< t_{j-2}<r\}} \big\| g_{j-i-1}(\pmb{t_{i:j-2}},\bullet,\theta,w,r,z) \big\|_{\cP_0^{\otimes (j-i-1)}} d\pmb{t_{i:j-2}}\right) \nonumber \\
	& \quad \quad \quad  \left(\int_{\{r<t_{j-1}<\ldots<t_{n-2}<t\}} \big\| g_{n-j}(\pmb{t_{j-1:n-2}},\bullet,r,z,t,x) \big\|_{\cP_0^{\otimes (n-j)}} d\pmb{t_{j-1:n-2}} \right)d\theta dr.
\end{align}
We estimate each of the 3 inner integrals above using \eqref{in1} and \eqref{in2}. We have
\begin{align}
\nonumber
& \big\| f_{ij}^{(n)}(\cdot,w,z,x;t) \big\|_{\cP_0^{\otimes (n-2)}}^2 \leq \\
\nonumber
& \quad \left( \int_{0<\theta<r<t} p_{t-r}(x-z)p_{r-\theta}(z-w)drd\theta\right)^2 \frac{(2t)^{n-2}}{(i-1)!(j-i-1)!(n-j)!} \\
\label{f-ijn-1}
& \quad \left( \sum_{\ell=0}^{i-1} \frac{C_N^{i-1-\ell}}{\ell!} (t D_N)^{\ell}\right) \left( \sum_{\ell=0}^{j-i-1} \frac{(4C_N)^{j-i-1-\ell}}{\ell!}(tD_N)^{\ell}\right) \left( \sum_{\ell=0}^{n-j}\frac{(4C_N)^{n-j-\ell}}{\ell!}(tD_N)^{\ell}\right).
\end{align}
The same argument shows that for all $1\leq j<i\leq n$,
\begin{align}
\nonumber
& \big\| f_{ij}^{(n)}(\cdot,w,z,x;t) \big\|_{\cP_0^{\otimes (n-2)}}^2 \leq \\
\nonumber
& \quad \left( \int_{0<r<\theta<t} p_{t-\theta}(x-w)p_{\theta-r}(w-z)drd\theta\right)^2 \frac{(2t)^{n-2}}{(j-1)!(i-j-1)!(n-i)!} \\
\label{f-ijn-2}
& \quad \left( \sum_{\ell=0}^{j-1} \frac{C_N^{j-1-\ell}}{\ell!} (t D_N)^{\ell}\right) \left( \sum_{\ell=0}^{i-j-1} \frac{(4C_N)^{i-j-1-\ell}}{\ell!}(tD_N)^{\ell}\right) \left( \sum_{\ell=0}^{n-i}\frac{(4C_N)^{n-i-\ell}}{\ell!}(tD_N)^{\ell}\right).
\end{align}

 We now consider two cases.

{\em Case 1.} Suppose that there exists $N_0>0$ such that $C_{N_0}=0$. Then $C_N=0$ for all $N\geq N_0$. Let $N \geq N_0$ be arbitrary. Then the only non-zero terms in the 3 sums appearing on the right-hand-side of \eqref{f-ijn-1} are those corresponding to $\ell=i-1$ (for the first sum), $\ell=j-i-1$ (for the second sum), respectively $\ell=n-j$ (for the third sum). Therefore, \eqref{f-ijn-1} becomes:
\begin{align*}
\big\| f_{ij}^{ (n)}(\cdot,w,z,x;t) \big\|_{\cP_0^{\otimes (n-2)}}^2 \leq f_2^2(w,z,x;t) \frac{(2t^2D_N)^{n-2}}{[(i-1)!(j-i-1)!(n-j)!]^2}.
\end{align*}
Taking the sum for all $1\leq i<j\leq n$, and using the fact that
$\sum_{1\leq i <j \leq n} \binom{n-2}{i-1,j-i-1,n-j}^2 \leq \left(\sum_{1\leq i <j \leq n} \binom{n-2}{i-1,j-i-1,n-j}\right)^2=9^{n-2}$, we obtain:
\begin{align*}
\sum_{1\leq i<j \leq n}\big\| f_{ij}^{ (n)}(\cdot,w,z,x;t) \big\|_{\cP_0^{\otimes (n-2)}}^2 \leq
f_2^2(w,z,x;t) \frac{(18t^2D_N)^{n-2}}{[(n-2)!]^2}.
\end{align*}
For the sum for $j<i$, we use \eqref{f-ijn-2} and obtain that:
\begin{align*}
\sum_{1\leq j<i \leq n}\big\| f_{ij}^{ (n)}(\cdot,w,z,x;t) \big\|_{\cP_0^{\otimes (n-2)}}^2 \leq f_2^2(z,w,x;t) \frac{(18t^2D_N)^{n-2}}{[(n-2)!]^2}.
\end{align*}
Coming back to \eqref{bound-Bn} and using the fact that $n(n-1)\leq 4^{n-2}$, we get:
\[
\bE|B_n(w,z,x;t)|^2 \leq \Big( f_2^2(w,z,x;t)+f_2^2(z,w,x;t) \Big) \frac{(72t^2D_N)^{n-2}}{(n-2)!}.
\]
This proves that $\sum_{n\geq 2}B_n(w,z,x;t)$ converges in $L^2(\Omega)$. Moreover,
\[
\|D_{w,z}^2 u(t,x)\|_2 \leq \sum_{n\geq 2}\|B_n(w,z,x;t)\|_2 \leq C\widetilde{f}_2(w,z,x;t)\exp(C t^2 D_N).
\]
For higher moments (of order $p>2$), we use hypercontractivity to obtain \eqref{bound-D2}.

\medskip

{\em Case 2.} Suppose that $C_N>0$ for all $N>0$. In this case, bounding the three sums appearing on the right-hand-side of \eqref{f-ijn-1}, respectively by $C_{N}^{i-1} e^{C_N^{-1}tD_N}$, $(4C_{N})^{j-i-1} e^{(4C_N)^{-1}tD_N}$ and $(4C_{N})^{n-j} e^{(4C_N)^{-1}tD_N}$, we obtain that for all $i<j$,
\begin{align*}
\big\| f_{ij}^{ (n)}(\cdot,w,z,x;t) \big\|_{\cP_0^{\otimes (n-2)}}^2 \leq f_2^2(w,z,x;t) \frac{(2t)^{n-2}}{(i-1)!(j-i-1)!(n-j)!}(4C_N)^{n-2} e^{2tC_{N}^{-1}D_N}.
\end{align*}
Taking the sum over all $1\leq i<j\leq n$ and using the fact that
$\sum_{1\leq i<j\leq n}\binom{n-2}{i-1,j-i-1,n-j}=3^{n-2}$, we get:
\[
\sum_{1\leq i<j \leq n}\big\| f_{ij}^{ (n)}(\cdot,w,z,x;t) \big\|_{\cP_0^{\otimes (n-2)}}^2 \leq f_2^2(w,z,x;t) \frac{(24 tC_N)^{n-2}}{(n-2)!}e^{2t C_{N}^{-1}D_N}.
\]
Similarly,
\[
\sum_{1\leq j<i \leq n}\big\| f_{ij}^{ (n)}(\cdot,w,z,x;t) \big\|_{\cP_0^{\otimes (n-2)}}^2 \leq f_2^2(z,w,x;t) \frac{(24 tC_N)^{n-2}}{(n-2)!}e^{2t C_{N}^{-1}D_N}.
\]
Coming back to \eqref{bound-Bn} and using again the fact that $n(n-1) \leq 4^{n-2}$, we see that
\[
\bE|B_n(w,z,x;t)|^2 \leq \Big(f_2^2(w,z,x;t) + f_2^2(z,w,x;t) \Big)(96 tC_N)^{n-2}e^{2t C_{N}^{-1}D_N}.
\]
Since $\lim_{N \to \infty}C_N=0$ (by (D)), we can choose $N=N_T$ large enough such that
$$ 96T C_N<1.$$
Then, for any $t \in [0,T]$, $\sum_{n\geq 2}B_n(w,z,x;t)$ converges in $L^2(\Omega)$, and
\[
\|D_{w,z}^2 u(t,x)\|_2 \leq \sum_{n\geq 2}\|B_n(w,z,x;t)\|_2 \leq 2\widetilde{f}_2(w,z,x;t)\frac{1}{1-96T C_N}e^{2T C_{N}^{-1}D_N}.
\]
For moments of order $p>2$, we obtain relation \eqref{bound-D2} by hypercontractivity, as usually.
\end{proof}

\begin{remark}[Higher order Malliavin derivatives]
{\rm Under the hypotheses of Theorem \ref{main-th3}, using the same argument as above, it can be proved that for any $m\in \bN$, and for any $y_1,\ldots,y_m \in \bR^d$,
we have the following chaos expansion:
\[
D_{y_1,\ldots,y_m}^m u(t,x)
	=\sum_{n\geq m} n(n-1)\ldots (n-m+1) I_{n-m}(\widetilde{f}_n(\cdot,y_1,\ldots,y_m,x;t)).
\]
This series converges in $L^2(\Omega)$. Moreover, for any $p \geq 2$, $T>0$, $t\in [0,T]$ and $x \in \bR^d$,
\[
	\|D_{y_1,\ldots,y_m}^m u(t,x) \|_p \leq C_T \widetilde{f}_m(y_1,\ldots,y_m,x;t),
\]
where $C_T>0$ is a constant that depends on $(T,p,\gamma,m)$.
}
\end{remark}

\subsection{Proof of Theorem \ref{main} under Assumption A}

Since $\gamma$ is integrable, $\mu$ has a continuous density $g$, which is bounded by $\|\gamma\|_{L^1(\bR^d)}$.

\medskip

(i) (Limiting covariance) We use the same argument as in the proof of Theorem 1.3.(i) of \cite{BY} (for the wave equation), with some minor differences which we include below. It is enough to prove that
\[
\int_{\bR^d}\rho_{t,s}(z)dz<\infty.
\]
Let $\alpha_n(z;t,s)=(n!)^2\langle \widetilde{f}_n(\cdot,z;t),
\widetilde{f}_n(\cdot,0;s) \rangle_{\cP_0^{\otimes n}}$.
Using the fact that $\int_{\bR^d}p_t(x)dx=1$ and $|\cF p_t(\xi)|^2=e^{-t|\xi|^2} \leq 1$, we obtain that for any $\e>0$,
\begin{align*}
&\int_{\bR^d}\alpha_{n}(z;t,s)e^{-\frac{\e|z|^2}{2}}dz\\
& \quad \leq n! t^2 \|\gamma\|_{L^1(\bR^d)} \int_{T_n(t)} \int_{(\bR^d)^{n-1}} \prod_{j=1}^{n-1} |\cF p_{t_{j+1}-t_j}(\xi_1+\ldots+\xi_j)|^2 \mu(d\xi_1)\ldots \mu(d\xi_n) d\pmb{t} \\
& \quad = n! t^2 \|\gamma\|_{L^1(\bR^d)} \int_0^t J_{n-1}(t_n)dt_n \leq n! t^3 \|\gamma\|_{L^1(\bR^d)} h_{n-1}(t),
\end{align*}
where for the last inequality we used \eqref{Jn-hn} and the fact that $h_{n-1}$ is non-decreasing. Therefore, by Lemma \ref{Lem-3.8},
\[
\int_{\bR^d}\rho_{t,s}(z)dz=\sum_{n\geq 1}\frac{1}{n!}\int_{\bR^d}\alpha_n(z;t,s)dz \leq  t^3 \|\gamma\|_{L^1(\bR^d)} \sum_{n\geq 1}h_{n-1}(t)<\infty.
\]

\medskip

(ii) (QCLT) We apply a version of Proposition 1.8 of \cite{BNQZ} for the time-independent noise, and we use the key estimates for $Du$ and $D^2u$ given by \eqref{D-bound} and \eqref{bound-D2}. We omit the details, as they are very similar to the case of the wave equation (see the proof of Theorem 1.3.(ii) of \cite{BY}).

\medskip

(iii) (FCLT)
{\em Step 1.} (tightness) We prove that for any $R>0$, $p\geq 2$, $0<s<t<T$ and $\alpha \in (0,1/2)$,
\begin{equation}
\label{tight1}
\|F_R(t)-F_R(s)\|_p \leq CR^{d/2} \left( (t-s)^{\alpha} + (t-s)^{1/2} \right),
\end{equation}
where $C>0$ is a constant that depends on $T,\gamma,p,\alpha,d$. From this, it will follow that $F_R=\{F_R(t)\}_{t\geq 0}$ has a continuous modification (denoted also by $F_R$), by Kolmogorov's continuity theorem, and $\{F_R\}_{R >0}$ is tight in $C[0,T]$, by Theorem 12.3 of \cite{billingsley68}.

Using the chaos expansion, we can write
$F_R(t)-F_R(s)= \sum_{n\geq 1} I_n (g_{n,R}(\cdot;t,s))$,
where
\begin{align} \label{eq-decomposition-1}
	& g_{n,R}(x_1,\ldots,x_n;t,s)
	= \int_{B_R} \big( f_n(x_1,\ldots,x_n,x;t) - f_n(x_1,\ldots,x_n,x;s) \big)dx \nonumber \\
	=& \int_{B_R} \int_{T_n(s)} \prod_{j=1}^{n-1} p_{t_{j+1}-t_j} (x_{j+1} - x_j) \big( p_{t-t_n}(x-x_n) - p_{s-t_n}(x-x_n) \big) d\pmb{t} dx \nonumber \\
	&+ \int_{B_R} \int_{T_n(t)} \prod_{j=1}^{n-1} p_{t_{j+1}-t_j} (x_{j+1} - x_j) p_{t-t_n}(x-x_n) \mathbf{1}_{[s,t]}(t_n) d\pmb{t} dx \nonumber \\
	:=& g_{n,R}^{(1)}(x_1,\ldots,x_n;t,s) + g_{n,R}^{(2)}(x_1,\ldots,x_n;t,s).
\end{align}


We first estimate $\left\| \widetilde{g}^{(1)}_{n,R} (\cdot;t,s) \right\|_{\cP_0^{\otimes n}}$. By Lemma 3.1 of \cite{CH2019}, for any $\alpha \in (0,1/2)$, 
\[
	\left| p_{t-t_n}(x-x_n) - p_{s-t_n}(x-x_n) \right|
	\le C_{\alpha} (t-s)^{\alpha} (s-t_n)^{-\alpha} p_{4(t-t_n)} (x-x_n),
\]
where $C_{\alpha}>0$ depends on $\alpha$.
Then $\left| g_{n,R}^{(1)}(x_1,\ldots,x_n;t,s) \right| \le (t-s)^{\alpha} g_{n,R}'(x_1,\ldots,x_n;t,s)$, where
\begin{align*}
	& g_{n,R}'(x_1,\ldots,x_n;t,s)
	= C_{\alpha} \int_{B_R} \int_{T_n(s)} \prod_{j=1}^{n-1} p_{t_{j+1}-t_j} (x_{j+1} - x_j) (s-t_n)^{-\alpha} p_{4(t-t_n)} (x-x_n) d\pmb{t} dx.
\end{align*}
The spatial Fourier transform of $g_{n,R}'$ is:
\begin{align*}
	&\cF g_{n,R}' (\cdot;t,s) (\xi_1,\ldots,\xi_n) \\
	=& C_{\alpha} \int_{B_R} e^{-i(\xi_1+\ldots+\xi_n) \cdot x} \int_{T_n(s)} (s-t_n)^{-\alpha} \prod_{j=1}^{n-1} \cF p_{t_{j+1}-t_j} (\xi_1 + \ldots + \xi_j) \cF p_{4(t-t_n)} (\xi_1 + \ldots + \xi_n) d\pmb{t} dx \\
	=& C_{\alpha} \cF \mathbf{1}_{B_R} (\xi_1+\ldots+\xi_n) \int_{T_n(s)} (s-t_n)^{-\alpha} \prod_{j=1}^{n-1} \cF p_{t_{j+1}-t_j} (\xi_1 + \ldots + \xi_j) \cF p_{4(t-t_n)} (\xi_1 + \ldots + \xi_n) d\pmb{t}.
\end{align*}
By the definition of the norm $\|\cdot\|_{\cP_0^{\otimes n}}$ and the non-negativity of $\gamma$, we have
\begin{align*}
	& n! \left\| \widetilde{g}^{(1)}_{n,R} (\cdot;t,s) \right\|_{\cP_0^{\otimes n}}^2 =n! \langle  g^{(1)}_{n,R} (\cdot;t,s), \widetilde{g}^{(1)}_{n,R} (\cdot;t,s) \rangle_{\cP_0^{\otimes n}}^2 \nonumber \\
	=& n! \int_{(\bR^{d})^n}\int_{(\bR^{d})^n}  g^{(1)}_{n,R} (x_1, \ldots, x_n ;t,s) \widetilde{g}^{(1)}_{n,R} (y_1, \ldots, y_n ;t,s) \prod_{i=1}^n\gamma(x_i-y_i) d\pmb{x} d\pmb{y}\nonumber \\
	\le& n! (t-s)^{2\alpha} \int_{(\bR^{d})^n}  \int_{(\bR^{d})^n}  g'_{n,R} (x_1, \ldots, x_n ;t,s) \widetilde{g'}_{n,R} (y_1, \ldots, y_n ;t,s) \prod_{i=1}^n\gamma(x_i-y_i) d\pmb{x} d\pmb{y}\\
	=& n! (t-s)^{2\alpha} \int_{(\bR^{d})^n} \cF g_{n,R}' (\cdot;t,s) (\xi_1,\ldots,\xi_n) \overline{\cF \widetilde{g'}_{n,R} (\cdot;t,s) (\xi_1,\ldots,\xi_n)} \mu(d\xi_1) \ldots \mu(d\xi_n) \nonumber \\
	=& (t-s)^{2\alpha} \sum_{\rho \in S_n} \int_{(\bR^{d})^n} \cF g_{n,R}' (\cdot;t,s) (\xi_1,\ldots,\xi_n) \overline{\cF g'_{n,R} (\cdot;t,s) (\xi_{\rho(1)},\ldots,\xi_{\rho(n)})} \mu(d\xi_1) \ldots \mu(d\xi_n) \nonumber \\
	=& C_{\alpha}^2 (t-s)^{2\alpha} \sum_{\rho \in S_n} \int_{T_n(s)^2} \int_{(\bR^{d})^n}
	  (s-t_n)^{-\alpha} \prod_{j=1}^{n-1} \cF p_{t_{j+1}-t_j} (\xi_1 + \ldots + \xi_j)  \cF p_{4(t-t_n)} (\xi_1 + \ldots + \xi_n)    \nonumber  \\
&  (s-s_n)^{-\alpha}  \prod_{j=1}^{n-1}\cF p_{s_{j+1}-s_j} (\xi_{\rho(1)} + \ldots + \xi_{\rho(j)})\cF p_{4(t-s_n)} (\xi_{\rho(1)} + \ldots + \xi_{\rho(n)}) \nonumber \\
&
\left| \cF \mathbf{1}_{B_R} (\xi_1+\ldots+\xi_n) \right|^2  \mu(d\xi_1) \ldots \mu(d\xi_n)d\pmb{t} d\pmb{s} .
\nonumber
\end{align*}
By applying Lemma A.1 of \cite{BY}, we obtain:
\begin{align*}
	& n! \left\| \widetilde{g}^{(1)}_{n,R} (\cdot;t,s) \right\|_{\cP_0^{\otimes n}}^2 \leq C_{\alpha}^2 (t-s)^{2\alpha} s^n \int_{T_n(s)} (s-t_n)^{-2\alpha} \int_{\bR^{nd}} \prod_{j=1}^{n-1} \left| \cF p_{t_{j+1}-t_j} (\xi_1 + \ldots + \xi_j) \right|^2 \nonumber \\
& \quad \quad \quad
\left| \cF \left( \mathbf{1}_{B_R} * p_{4(t-t_n)} \right) (\xi_1+\ldots+\xi_n) \right|^2 \mu(d\xi_1) \ldots \mu(d\xi_n) d\pmb{t}.
\nonumber
\end{align*}

Using the maximal principle \eqref{max-principle}, we have:
\begin{align} \label{ineq-norm-(1)}
	& n! \left\| \widetilde{g}^{(1)}_{n,R} (\cdot;t,s) \right\|_{\cP_0^{\otimes n}}^2
	\le C_{\alpha}^2 (t-s)^{2\alpha} s^n \int_{T_n(s)} (s-t_n)^{-2\alpha}  \int_{(\bR^d)^n} \prod_{j=1}^{n-1} \left| \cF p_{t_{j+1}-t_j} (\xi_j) \right|^2
\nonumber \\
	& \times \left| \cF \mathbf{1}_{B_R} (\xi_n) \right|^2 \left| \cF p_{4(t-t_n)} (\xi_n) \right|^2 \mu(d\xi_1) \ldots \mu(d\xi_n) d\pmb{t}
\nonumber \\
	=& C_{\alpha}^2 (t-s)^{2\alpha} s^n \int_0^s (s-t_n)^{-2\alpha} h_{n-1}(t_n) \int_{\bR^d} \left| \cF \mathbf{1}_{B_R} (\xi_n) \right|^2 \left| \cF p_{4(t-t_n)} (\xi_n) \right|^2 \mu(d\xi_n)dt_n.
\end{align}
For the integral with respect to $d\xi_n$, using the fact that $|\cF p_t(\xi)|  \le 1$, we have
\begin{align}
\nonumber
	& \int_{\bR^d} \left| \cF \mathbf{1}_{B_R} (\xi_n) \right|^2 \left| \cF p_{4(t-t_n)} (\xi_n) \right|^2 \mu(d\xi_n)
	\le \int_{\bR^d} \left| \cF \mathbf{1}_{B_R} (\xi_n) \right|^2 \mu(d\xi_n) \\
\label{ineq-BR}
	& \quad \quad \quad =\int_{(\bR^d)^2} \mathbf{1}_{B_R}(x) \mathbf{1}_{B_R}(y) \gamma(x-y) dxdy
	\le C_{\gamma,d} R^d,
\end{align}
where $C_{\gamma,d} = \omega_d \|\gamma\|_{L^1(\bR^d)}$. For the integral with respect to $dt_n$, since $h_{n-1}$ is a non-decreasing function, we have
\begin{align} \label{ineq-integral of h}
	\int_0^s (s-t_n)^{-2\alpha} h_{n-1}(t_n) dt_n
	\le h_{n-1}(s) \int_0^s (s-t_n)^{-2\alpha} dt_n
	= h_{n-1}(s) \dfrac{s^{1-2\alpha}}{1-2\alpha}.
\end{align}
Hence,
\begin{align}
\label{ineq-norm-g(1)}
	n! \left\| \widetilde{g}^{(1)}_{n,R} (\cdot;t,s) \right\|_{\cP_0^{\otimes n}}^2
	\leq \dfrac{C_{\alpha}^2 C_{\gamma,d}}{1-2\alpha} R^d (t-s)^{2\alpha} s^{n+1-2\alpha} h_{n-1}(s).
\end{align}

Next, we estimate $\left\| \widetilde{g}^{(2)}_{n,R} (\cdot;t,s) \right\|_{\cP_0^{\otimes n}}$.
Note that the spatial Fourier transform of $g^{(2)}_{n,R}$ is
\begin{align*}
	\cF g^{(2)}_{n,R} (\cdot;t,s) (\xi_1, \ldots, \xi_n)
	= \cF \mathbf{1}_{B_R} (\xi_1+\ldots+\xi_n) \int_{T_n(t)} \prod_{j=1}^{n} \cF p_{t_{j+1}-t_j} (\xi_1 + \ldots + \xi_j) 1_{[s,t]}(t_n) d\pmb{t},
\end{align*}
where $t_{n+1}=t$.
Expressing the $\cP_0^{\otimes n}$-inner product in Fourier mode, and using Lemma A.1 of \cite{BY}, we have:
\begin{align*}
	& n! \left\| \widetilde{g}^{(2)}_{n,R} (\cdot;t,s) \right\|_{\cP_0^{\otimes n}}^2 = n! \langle g^{(2)}_{n,R} (\cdot;t,s) , \widetilde{g}^{(2)}_{n,R} (\cdot;t,s) \rangle_{\cP_0^{\otimes n}} \nonumber \\
	=& \sum_{\rho \in S_n} \int_{(\bR^{d})^n} \cF g_{n,R}^{(2)} (\cdot;t,s) (\xi_1,\ldots,\xi_n) \overline{\cF g^{(2)}_{n,R} (\cdot;t,s) (\xi_{\rho(1)},\ldots,\xi_{\rho(n)})} \mu(d\xi_1) \ldots \mu(d\xi_n) \nonumber \\
	=& \sum_{\rho \in S_n} \int_{T_n(t)}  \int_{T_n(t)}  \int_{(\bR^{d})^n}
\prod_{j=1}^{n} \cF p_{t_{j+1}-t_j} (\xi_1 + \ldots + \xi_j)
\prod_{j=1}^{n} \cF p_{s_{j+1}-s_j} (\xi_{\rho(1)} + \ldots + \xi_{\rho(j)})
\nonumber \\
	& \times 1_{[s,t]} (t_n) 1_{[s,t]}(s_n)
\left| \cF \mathbf{1}_{B_R} (\xi_1+\ldots+\xi_n) \right|^2 \mu(d\xi_1) \ldots \mu(d\xi_n)
d\pmb{s}   d\pmb{t}  \nonumber \\
	\le& t^n \int_{T_n(t)} 1_{[s,t]}(t_n) \ \int_{(\bR^{d})^n} \prod_{j=1}^{n-1} \left| \cF p_{t_{j+1}-t_j} (\xi_1 + \ldots + \xi_j) \right|^2
\nonumber \\
& \times \left| \cF (\mathbf{1}_{B_R}* p_{t-t_n}) (\xi_1+\ldots+\xi_n) \right|^2 \mu(d\xi_1) \ldots \mu(d\xi_n)  d\pmb{t}.
\end{align*}
Using the maximal principle \eqref{max-principle} and an inequality similar to \eqref{ineq-BR} but for $p_{t-t_n}$ instead of $p_{4(t-t_n)}$, we have:
\begin{align}
	& n! \left\| \widetilde{g}^{(2)}_{n,R} (\cdot;t,s) \right\|_{\cP_0^{\otimes n}}^2 \nonumber \\
	\le& t^n \int_{T_n(t)} 1_{[s,t]} (t_n) \int_{(\bR^{d})^n}
	\prod_{j=1}^{n-1} \left| \cF p_{t_{j+1}-t_j} (\xi_j) \right|^2 \left| \cF \left( \mathbf{1}_{B_R} * p_{t-t_n} \right) (\xi_n) \right|^2 \mu(d\xi_1) \ldots \mu(d\xi_n)  d\pmb{t} \nonumber \\
\label{ineq-norm-(2)}
	=& t^n \int_s^t h_{n-1}(t_n) \left(\int_{\bR^d} \left| \cF \mathbf{1}_{B_R} (\xi_n) \right|^2 \left| \cF p_{t-t_n} (\xi_n) \right|^2 \mu(d\xi_n)\right) dt_n \\
\nonumber
\leq & t^n C_{\gamma,d} R^d \int_s^t h_{n-1}(t_n) dt_n.
\end{align}
Using the fact that $h_{n-1}$ is a non-decreasing function, we obtain:
\begin{align}
\label{ineq-norm-g(2)}
	n! \left\| \widetilde{g}^{(2)}_{n,R} (\cdot;t,s) \right\|_{\cP_0^{\otimes n}}^2
	\le C_{\gamma,d} t^n (t-s) h_{n-1}(t) R^d.
\end{align}

By the hypercontractivity, \eqref{ineq-norm-g(1)}, \eqref{ineq-norm-g(2)} and Lemma \ref{Lem-3.8}, we have
\begin{align*}
	\|F_R(t)-F_R(s)\|_p
	\le& \sum_{n\geq 1} (p-1)^{n/2} (n!)^{1/2} \Big(\left\| \widetilde{g}^{(1)}_{n,R} (\cdot;t,s) \right\|_{\cP_0^{\otimes n}}
	+ \left\| \widetilde{g}^{(2)}_{n,R} (\cdot;t,s) \right\|_{\cP_0^{\otimes n}} \Big) \\
	\le& C R^{d/2} \sum_{n\geq 1} (p-1)^{n/2}  \sqrt{h_{n-1}(t)}\left[\sqrt{ s^{n+1-2\alpha} (t-s)^{2\alpha} }+
	  \sqrt{t^n (t-s) } \right] \\
	\le& C R^{d/2} \left( (t-s)^{\alpha} + (t-s)^{1/2} \right),
\end{align*}
where $C>0$ depends on $p, t, \gamma, \alpha, d$ and is increasing in $t$. This proves \eqref{tight1}.

{\em Step 2.} (finite-dimensional convergence)
Recall that $Q_R(t)=R^{-d/2}F_R(t)$. The same argument as in the proof of Theorem 1.3 (iii) (Step 2) of \cite{BY} (for wave equation) shows that for any $m \in \bN_+$, $0\leq t_1<\ldots<t_m\leq T$,
\begin{equation*}
	(Q_R(t_1),\ldots,Q_R(t_m)) \stackrel{d}{\to} (\cG(t_1),\ldots,\cG(t_m)).
\end{equation*}

\subsection{Proof of Theorem \ref{main} under Assumption B} \label{sec:CLT Riesz}

Under Assumption B, $\Gamma$ is the Fourier transform of the measure $\mu(d\xi) = c_{d,\beta} |\xi|^{\beta-d} d\xi$.

(i) (Limiting Covariance) We use a similar argument as in the proof of Theorem 1.4.(i) of \cite{BY} (for the wave equation). We sketch this below.

We have the chaos expansion
$F_R(t) = \sum_{n\geq 1} {\bf J}_{n,R} (t)$, with
\begin{align}
\label{def-JnR}
	{\bf J}_{n,R} (t) = I_n(g_{n,R}(\cdot;t))  \quad \mbox{and} \quad
g_{n,R}(\cdot;t)=\int_{B_R} f_n(\cdot,x;t) dx.
\end{align}

We show that only the first-chaos term contributes to the limit. For this term, we use the fact that
$|\cF p_t(\xi/R)| = \exp (-\frac{t|\xi|^2}{2R^2})$ converges to 1 as $R \to \infty$, and is bounded by $1$. Hence,
\begin{align} \label{eq-E[I1I1]}
	\bE [ {\bf J}_{1,R} (t) {\bf J}_{1,R} (s) ]
	\sim R^{2d-\beta} ts \int_{B_1^2} |x-x'|^{-\beta} dxdx' \quad \mbox{as} \quad R \to \infty.
\end{align}
Next, we show that the chaos terms of order $n\geq 2$ are negligible.
By (64) of \cite{BY},
\begin{align*}
	& \bE [ {\bf J}_{n,R}^2(t)] \\
	&\le t^n c_{d,\beta}^n R^{2d-\beta} \int_{T_n(t)} dt_1 \ldots dt_n \int_{(\bR^d)^{n-1}}  d\eta_1 \ldots d\eta_{n-1}\prod_{j=1}^{n-1} |\eta_j-\eta_{j-1}|^{-(d-\beta)} \prod_{j=1}^{n-1} |\cF p_{t_{j+1}-t_j}(\eta_j)|^2 \\
	& \quad \int_{\bR^d} \left( \int_{B_1^2}
	e^{-i \eta_n \cdot (x-x')} dxdx'\right) |\cF p_{t-t_n}(\eta_n/R)|^2 |\eta_n - \eta_{n-1}R|^{\beta-d} d\eta_n,
\end{align*}
where $\eta_0=0$. The right-hand side coincide with the integral on the right-hand side of relation (3.35) in \cite{NZ20EJP}. Thus, by the computation in \cite{NZ20EJP}, we have:
\begin{align} \label{eq-E[InIn]}
	\sum_{n\geq 2} \bE [ {\bf J}_{n,R}^2 (t) ]
	= o(R^{2d-\beta}) \quad {\rm as} \quad R \to \infty.
\end{align}
The conclusion follows by \eqref{eq-E[I1I1]} and \eqref{eq-E[InIn]}.

\medskip

(ii) (QCLT) The argument is similar to the Section 3.1.2 of \cite{NXZ} for the heat equation with colored noise in time, but the details are slightly different.

We apply a version of Proposition 1.8 of \cite{BNQZ} for the time-independent noise. We obtain:
\[
d_{TV}\left( \frac{F_R(t)}{\sigma_R(t)},Z\right)\leq \frac{4}{\sigma_R^2(t)}\sqrt{\cA}
\]
where $Z\sim N(0,1)$ and
\begin{align*}
\cA&=\int_{(\bR^d)^6} \|D_{z,w}^2 F_R(t)\|_4
 \|D_{y,w'}^2 F_R(t)\|_4  \|D_{z'} F_R(t)\|_4  \|D_{y'} F_R(t)\|_4\\
 & \qquad \qquad \qquad \gamma(y-y') \gamma(z-z')\gamma(w-w')dydy' dz dz' dw dw'.
\end{align*}
Since $\sigma_{R}^2(t) \sim K(t,t) R^{2d-\beta}$ (by part (i)), it is enough to prove that
\[
\cA \leq C_t R^{4d-3\beta},
\]
where $C_t>0$ is a constant depending on $t$. Note that $D_{z}F_R(t)=\int_{B_R}u(t,x)dx$. By Minkowski's inequality and \eqref{D-bound},
\[
\|D_z F_R(t)\|_4 \leq \int_{B_R}\|D_z u(t,x)\|_4 dx \leq C_t \int_{B_R} f_1(z,x;t)dx.
\]
Similarly, using \eqref{bound-D2},
\[
\|D_{z,w}^2 F_R(t)\|_4 \leq \int_{B_R}\|D_{z,w}^2 u(t,x)\|_4 dx \leq C_t \int_{B_R} \widetilde{f}_2(z,w,x;t)dx.
\]
Hence
\begin{align*}
\cA & \leq C_t^4 \int_{B_R^4} \int_{(\bR^d)^6}
\widetilde{f}_2(z,w,x_1;t) \widetilde{f}_2(y,w',x_2;t) f_1(z',x_3;t) f_1(y',x_4,x;t) \\
& \qquad \gamma(y-y') \gamma(z-z')\gamma(w-w') dydy' dzdz' dw dw' dx_1 dx_2 dx_3 dx_4 =:C_t^4 \sum_{i=1}^4 \cA_i.
\end{align*}
We treat only $\cA_1$, the other terms being similar. We have:
\begin{align*}
& \cA_1 =\int_{B_R^4} \int_{(\bR^d)^6}\left( \int_{0<\theta<r<t} p_{t-r}(x_1-z) p_{r-\theta}(z-w)dr d\theta\right) \\
& \ \left( \int_{0<\theta'<s<t} p_{t-s}(x_2-y) p_{s-\theta'}(y-w')ds d\theta'\right) \left( \int_0^t p_{t-r'}(x_3-z') dr'\right)
\left( \int_0^t p_{t-s'}(x_4-y') ds'\right) \\
& \quad  \gamma(y-y') \gamma(z-z')\gamma(w-w') dydy' dzdz' dw dw' dx_1 dx_2 dx_3 dx_4.
\end{align*}
We notice that the integral on $B_R^4 \times (\bR^d)^6$ coincides with the integral (on the same domain) which appears on the right-hand side of $\cA^*$ in Section 3.1.2 of \cite{NXZ}. Therefore, the remaining part of the argument is the same as in \cite{NXZ}.

\medskip

(iii) (CLT)
{\em Step 1.} (tightness) We prove that for any $R>0$, $p\geq 2$, $0<s<t<T$ and $\alpha \in (0,\frac{1}{2})$,
\begin{equation}
\label{Riesz-tight}
\|F_R(t)-F_R(s)\|_p \leq C R^{d-\beta/2} \left( (t-s)^{\alpha} + (t-s)^{1/2} \right),
\end{equation}
where $C>0$ is a constant depending on $T,\gamma,p,\alpha,d$.
The conclusion will follow by Kolmogorov's continuity theorem and Theorem 12.3 of \cite{billingsley68}.

Note that formula \eqref{eq-decomposition-1} still holds. For the term $\left\| \widetilde{g}^{(1)}_{n,R} (\cdot;t,s) \right\|_{\cP_0^{\otimes n}}$, the estimate \eqref{ineq-norm-(1)} is still valid. For the integral with respect to $d\xi_n$ in \eqref{ineq-norm-(1)}, noting that $|\cF p_t(\xi)| = \exp(-t|\xi|^2/2) \le 1$ for all $t > 0$ and $\xi \in \bR^d$, we have
\begin{align}
\label{BR-Riesz}
	\int_{\bR^d} \left| \cF \mathbf{1}_{B_R} (\xi_n) \right|^2 \left| \cF p_{4(t-t_n)} (\xi_n) \right|^2 \mu(d\xi_n)
	\le \int_{\bR^d} \left| \cF \mathbf{1}_{B_R} (\xi_n) \right|^2 \mu(d\xi_n)
	= C_{\gamma,d}' R^{2d-\beta},
\end{align}
where $C_{\gamma,d}'$ is a positive constant that only depends on $\gamma, d$, and we use relation (68) of \cite{BY} in the equality. Hence, together with \eqref{ineq-integral of h}, we can write \eqref{ineq-norm-(1)} as
\begin{align} \label{eq-norm-g(1)-Riesz}
	n! \left\| \widetilde{g}^{(1)}_{n,R} (\cdot;t,s) \right\|_{\cP_0^{\otimes n}}^2
	\le \dfrac{C_{\alpha}^2 C_{\gamma,d}'}{1-2\alpha} R^{2d-\beta} (t-s)^{2\alpha} s^{n+1-2\alpha} h_{n-1}(s).
\end{align}
For the term $\left\| \widetilde{g}^{(2)}_{n,R} (\cdot;t,s) \right\|_{\cP_0^{\otimes n}}$, the estimate \eqref{ineq-norm-(2)} is still valid. For the inner integral, we use an inequality similar to \eqref{BR-Riesz} but for $p_{t-t_n}$ instead of $p_{4(t-t_n)}$.
So, we can write \eqref{ineq-norm-(2)} as
\begin{align} \label{eq-norm-g(2)-Riesz}
	n! \left\| \widetilde{g}^{(2)}_{n,R} (\cdot;t,s) \right\|_{\cP_0^{\otimes n}}^2
	\le C_{\gamma,d}' t^n (t-s) h_{n-1}(t) R^{2d-\beta}.
\end{align}
Relation \eqref{Riesz-tight} follows by hypercontractivity, \eqref{eq-norm-g(1)-Riesz}, \eqref{eq-norm-g(2)-Riesz} and Lemma \ref{Lem-3.8}.

{\em Step 2.} (finite-dimensional convergence)
Recall that $Q_R(t)=R^{-d+\beta/2}F_R(t)$. The same argument (based on the domination of the first chaos) as in the proof of Theorem 1.4 (iii) (Step 2) of \cite{BY} (for wave equation) shows that for any $m \in \bN_+$, $0\leq t_1<\ldots<t_m\leq T$,
\begin{equation*}
	(Q_R(t_1),\ldots,Q_R(t_m)) \stackrel{d}{\to} (\cG(t_1),\ldots,\cG(t_m)).
\end{equation*}


\section{The Rough Case}

In this section, we examine the case of the rough noise. One of the differences compared with the regular case is that the maximal principle \eqref{max-principle} does not hold. We first examine the question of existence of solution, then we give some estimates for the increments of the Malliavin derivative of the solution, and finally we give the proof of Theorem \ref{main} under Assumption C.

\subsection{Existence of solution}

In this subsection, we establish the existence of the solution to \eqref{eq-heat} under Assumption C.

We will use frequently the following properties.
By Stirling's formula, for any $a>0$,
 $\Gamma(an+1) \sim (n!)^a a^{an} a^{1/2} (2\pi n)^{\frac{1-a}{2}}$, and hence,
\begin{equation}
\label{Gamma1}
c_1^n (n!)^a \leq \Gamma(an+1)\leq c_2^n (n!)^a \quad \mbox{for all $n\geq 1$},
\end{equation}
where $c_1,c_2>0$ are constants depending on $a$. In addition,
\begin{equation}
\label{ML-function}
 \sum_{n\geq 0}\frac{x^n}{(n!)^a}\leq c_1 \exp(c_2 x^{1/a}) \quad \mbox{for all $x>0$}.
 \end{equation}

We denote by $c>0$ a constant depending on $H$ that may be different in each of its appearances. We first list some inequalities which we will use frequently below.

From relation (4.16) of \cite{NXZ} (with $H_0=1/2$ and $s=0$) and \eqref{Gamma1}, we have:
\begin{equation}
\label{4-16}
\|f_n(\cdot,t,x)\|_{\cH_0^{\otimes n}}^2=\int_{T_n(t)}\|f_n(\pmb{t_n},\bullet,t,x)\|_{\cP_0^{\otimes n}}^2 d\pmb{t_n} \leq
c^n \frac{t^{nH}}{\Gamma(nH+1)}\leq c^n \frac{t^{nH}}{(n!)^H}.
\end{equation}
Therefore, using Cauchy-Schwarz inequality, we obtain:
\begin{align}
\label{in1'}
	\left(\int_{T_n(t)} \|f_{n}
	(\pmb{t_{n}},\bullet,t,x)\|_{\cP_0^{\otimes n}}d\pmb{t_n}\right)^2   \leq \frac{t^n}{n!} \|f_{n}(\cdot,t,x)\|_{\cH_0^{\otimes n}}^2 \leq
c^n \frac{t^{n(H+1)}}{(n!)^{H+1}}.
\end{align}

By relation (4.19) of \cite{NXZ} (with $H_0=1/2$),
\[
n! \|g_n(\cdot,r,z,t,x)\|_{\cH_0^{\otimes n}}^2 \leq c^n \Gamma((1-H)n+1)(t-r)^{nH}p_{t-r}^2(x-z),
\]
which, due to \eqref{Gamma1}, can be written as:
\begin{equation}
\label{4-19}
\|g_n(\cdot,r,z,t,x)\|_{\cH_0^{\otimes n}}^2 \leq c^n \frac{(t-r)^{nH}}{(n!)^{H}}p_{t-r}^2(x-z).
\end{equation}
Therefore, by Cauchy-Schwarz inequality,
\begin{align}
\label{in2'}
	\left( \int_{\{r<t_1<\ldots<t_{n}<t\}} \|g_{n}
	(\pmb{t_{n}},\bullet,r,z,t,x)\|_{\cP_0^{\otimes n}}d\pmb{t_{n}} \right)^2  
	\leq c^{n} \frac{(t-r)^{n(H+1)}}{(n!)^{H+1}}p_{t-r}^2(x-z).
\end{align}

Note that \eqref{4-16}, \eqref{in1'}, \eqref{4-19} and \eqref{in2'} are the respective analogues of \eqref{norm-fn-white}, \eqref{in1}, \eqref{3-17-NXZ} and \eqref{in2} for the rough case.

\begin{lemma}
\label{Thm-existence-heat-rough}
Suppose Assumption C holds. Then equation \eqref{eq-heat} has a unique solution $u$. Moreover, for any $p \geq 2$, $t>0$ and $x \in \bR$,
\begin{equation}
\label{rough-mom-p}
	\|u(t,x)\|_p\leq c_1 \exp\left(c_2 (p-1)^{\frac{1}{H}} t^{\frac{H+1}{H}}  \right)=:C_1(t),
\end{equation}
where $c_1>0$ and $c_2>0$ are some constants depending on $H$.
\end{lemma}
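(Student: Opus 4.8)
The plan is to mirror the proof of Theorem \ref{Thm-existence-heat} from the regular case, replacing the two ingredients that are unavailable here --- the maximal principle \eqref{max-principle} and Lemma \ref{Lem-3.8} --- by their rough-case analogues \eqref{4-16} and \eqref{in1'}, together with the Mittag-Leffler-type summation bound \eqref{ML-function}. There are three steps: (a) bound the $\cP_0^{\otimes n}$-norm of the time-integrated kernel $f_n(\cdot,x;t)$; (b) deduce the convergence of the chaos series \eqref{eq-chaos expension-heat} via the criterion \eqref{series-conv}, which gives existence and uniqueness; and (c) obtain the $L^p$-estimate by hypercontractivity.

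First I would estimate $\|f_n(\cdot,x;t)\|_{\cP_0^{\otimes n}}$. Since $f_n(\cdot,x;t)=\int_{T_n(t)} f_n(\pmb{t_n},\bullet,t,x)\,d\pmb{t_n}$, Minkowski's integral inequality in the Hilbert space $\cP_0^{\otimes n}$ gives $\|f_n(\cdot,x;t)\|_{\cP_0^{\otimes n}}\leq \int_{T_n(t)}\|f_n(\pmb{t_n},\bullet,t,x)\|_{\cP_0^{\otimes n}}\,d\pmb{t_n}$, and the right-hand side is exactly the quantity controlled by \eqref{in1'}. This yields
\[
\|f_n(\cdot,x;t)\|_{\cP_0^{\otimes n}}^2 \leq c^n \frac{t^{n(H+1)}}{(n!)^{H+1}},
\]
which is the rough-case substitute for \eqref{norm-fn}.

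For existence I would combine the rough bound \eqref{rough-bound} with the display above to obtain $\sum_{n\geq 1} n!\,\|\widetilde f_n(\cdot,x;t)\|_{\cP_0^{\otimes n}}^2 \leq \sum_{n\geq 1} c^n t^{n(H+1)}/(n!)^H$, and then apply \eqref{ML-function} with $a=H$ and $x=c\,t^{H+1}$ to see that this series is finite; by \eqref{series-conv} this establishes existence and uniqueness. For the moment bound I would use Minkowski's inequality in $L^p(\Omega)$ together with hypercontractivity, exactly as at the end of the proof of Theorem \ref{Thm-existence-heat}:
\[
\|u(t,x)\|_p \leq \sum_{n\geq 0}(p-1)^{n/2}(n!)^{1/2}\|\widetilde f_n(\cdot,x;t)\|_{\cP_0^{\otimes n}} \leq \sum_{n\geq 0}\frac{\bigl[c^{1/2}(p-1)^{1/2}t^{(H+1)/2}\bigr]^n}{(n!)^{H/2}},
\]
and then invoke \eqref{ML-function} with $a=H/2$. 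With $x=c^{1/2}(p-1)^{1/2}t^{(H+1)/2}$ one has $x^{2/H}=c^{1/H}(p-1)^{1/H}t^{(H+1)/H}$, so after absorbing the constant into $c_2$ this reproduces precisely the claimed bound \eqref{rough-mom-p}.

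The calculations are routine once the analogue inequalities are in hand; the only genuine content is the bookkeeping of factorial powers. The crucial point --- and the place where the rough case truly differs from the regular one --- is that the chaos norms decay only like $(n!)^{-H}$ rather than $(n!)^{-1}$, so an ordinary exponential series no longer converges and one must pass through the Mittag-Leffler bound \eqref{ML-function}. Matching the exponent $a$ in each application ($a=H$ for existence, $a=H/2$ for the $p$-th moment) and verifying that $x^{1/a}$ regenerates the exponents $(H+1)/H$ in $t$ and $1/H$ in $(p-1)$ is the single step that requires care.
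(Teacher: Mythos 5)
Your proposal is correct and follows essentially the same route as the paper's proof: Minkowski's inequality combined with \eqref{in1'} to bound $\|f_n(\cdot,x;t)\|_{\cP_0^{\otimes n}}$, the trivial bound $\|\widetilde f_n\|_{\cP_0^{\otimes n}}\leq\|f_n\|_{\cP_0^{\otimes n}}$ together with \eqref{ML-function} for convergence of the chaos series, and hypercontractivity plus \eqref{ML-function} (with $a=H/2$) for the $L^p$ estimate. The exponent bookkeeping in your final step, $x^{2/H}=c^{1/H}(p-1)^{1/H}t^{(H+1)/H}$, matches the paper exactly.
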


\begin{proof}
By Minkowski inequality and \eqref{in1'}, we have
\[
\|f_n(\cdot,x;t)\|_{\cP_0^{\otimes n}}=\left\| \int_{T_n(t)} f_n(\pmb{t_n},\bullet,t,x)d\pmb{t_n}\right\|_{\cP_0^{\otimes n}}
\leq \int_{T_n(t)}\| f_n(\pmb{t_n},\bullet,t,x)\|_{\cP_0^{\otimes n}}d\pmb{t_n} \leq \frac{c^{n/2} t^{\frac{n(H+1)}{2}}}{(n!)^{\frac{H+1}{2}}}.
\]
Using the trivial bound $\|\widetilde{f}\|_{\cP_0^{\otimes n}}\leq
\|f\|_{\cP_0^{\otimes n}}$, we have:
\[
\bE|u(t,x)|^2=
\sum_{n\geq 1}n!\|\widetilde{f}_n(\cdot,x;t)\|_{\cP_0^{\otimes n}}^2
\leq \sum_{n\geq 1}n!\|f_n(\cdot,x;t)\|_{\cP_0^{\otimes n}}^2 \leq  \sum_{n\geq 1} c^n \frac{t^{n(H+1)}}{(n!)^H}.
\]
Relation \eqref{rough-mom-p} for $p=2$ follows by \eqref{ML-function}.
For general $p\geq 2$, by hypercontractivity,
\[
\|u(t,x)\|_p \leq \sum_{n\geq 0}(p-1)^{n/2}(n!)^{1/2}\|f_n(\cdot,x;t)\|_{\cP_0^{\otimes n}} \leq \sum_{n\geq 0}(p-1)^{n/2}c^{n/2}\frac{t^{n(H+1)/2}}{(n!)^{H/2}}.
\]
Relation \eqref{rough-mom-p} follows by \eqref{ML-function}.
\end{proof}

\begin{remark}
{\rm Relation \eqref{rough-mom-p} extends Corollary 5.4 of \cite{HLN17} (with $\alpha=2-2H$) to the case of time-independent noise (when $\alpha_0=0$). In Remark 5.3 of \cite{HLN17}, the authors mention that their methods do not cover the case $\alpha_0=0$.
}
\end{remark}

\subsection{Estimates on the Malliavin derivatives}
\label{section-increments}

In this section, we provide some estimates for the increments of $Du(t,x)$ and the rectangular increments of $D^2 u(t,x)$, which will be used for the proof of the QCLT below. These estimates are similar to those given by Proposition 4.1 of \cite{NXZ} for the solution of the parabolic Anderson model with colored noise in time.

We recall the notation from \cite{NXZ}: for any $t>0$ and $x,x',x'' \in \bR^d$,
\begin{align*}
\Delta_t(x,x')&=p_t(x+x')-p_t(x)\\
R_t(x,x',x'')&=p_t(x+x'-x'')-p_t(x+x')-p_t(x-x'')+p_t(x).
\end{align*}
The following function is the one given by relation (2.16) of \cite{NXZ} in the case $H_0=1/2$:
\[
N_t(x) = t^{-1/8}|x|^{1/4}  \mathbf{1}_{\{|x| \le \sqrt{t}\}} + \mathbf{1}_{\{|x| > \sqrt{t}\}}, \quad t>0,x \in \bR^d.
\]

We will need some inequalities for the increments of $f_n$ and $g_n$ which we borrow from \cite{NXZ}. We list them below.

Using the Cauchy-Schwarz inequality, followed by relations (4.18), (4.20)-(4.22) of \cite{NXZ} (with $H_0 = 1/2$), and the estimate \eqref{Gamma1} for the Gamma function, we obtain:
\begin{align}
\label{ineq-norm-difference f_j}
	& \left( \int_{T_{n}(t)} \| f_{n} (\pmb{t_{n}},\bullet,t,x+x') - f_{n} (\pmb{t_{n}},\bullet,t,x) \|_{\cP_0^{\otimes (n)}} d\pmb{t_{n}} \right)^2 \leq c^{n} \dfrac{t^{n(H+1)}}{(n!)^{(H+1)}}  N_r^2(x').
\end{align}
\begin{align}
\nonumber
	&\left( \int_{\{r<t_1<\ldots<t_{n}<t\}}  \| g_{n}(\pmb{t_{n}},\bullet,r,z+z',t,x) - g_{n}(\pmb{t_{n}},\bullet,r,z,t,x) \|_{\cP_0^{\otimes (n-j)}} d\pmb{t_{n}} \right)^2 \nonumber \\
	\label{ineq-norm-difference g_j}
	& \quad \leq  c^{n} \dfrac{(t-r)^{n(H+1)}}{(n!)^{(H+1)}} \Big( |\Delta_{t-r}(z-x,z')| + p_{t-r}(x-z) N_{t-r}(z') \Big)^2.
\end{align}
\begin{align}
\nonumber
	&\left( \int_{\{r<t_1<\ldots<t_{n}<t\}}  \| g_{n}(\pmb{t_{n}},\bullet,r,z,t,x+x') - g_{n}(\pmb{t_{n}},\bullet,r,z,t,x) \|_{\cP_0^{\otimes (n-j)}} d\pmb{t_{n}} \right)^2 \nonumber \\
	\label{ineq-norm-difference g_j-2}
	& \quad \leq  c^{n} \dfrac{(t-r)^{n(H+1)}}{(n!)^{(H+1)}} \Big( |\Delta_{t-r}(x-z,x')| + p_{t-r}(x-z) N_{t-r}(x') \Big)^2.
\end{align}
\begin{align}
\nonumber
&\left( \int_{\{r<t_1<\ldots<t_{n}<t\}}  \| g_{n}(\pmb{t_{n}},\bullet,r,z+z',t,x+x') - g_{n}(\pmb{t_{n}},\bullet,r,z,t,x+x') \right.\\
\nonumber
& \qquad \qquad \qquad \qquad \quad \left.-
g_{n}(\pmb{t_{n}},\bullet,r,z+z',t,x)+g_{n}(\pmb{t_{n}},\bullet,r,z,t,x) \|_{\cP_0^{\otimes (n-j)}} d\pmb{t_{n}} \right)^2 \\
\nonumber
	& \quad \leq  c^{n} \dfrac{(t-r)^{n(H+1)}}{(n!)^{(H+1)}} \Big( |R_{t-r}(x-z,x',z')| + |\Delta_{t-r}(x-z,x')|N_{t-r}(z') \\
& \label{rectangular-diff}
\quad \quad \quad +|\Delta_{t-r}(z-x,z')|N_{t-r}(x')
+ p_{t-r}(x-z) N_{t-r}(x')N_{t-r}(z') \Big)^2.
\end{align}

The following results can be viewed as extensions of Lemmas 4.2 and 4.3 of \cite{NXZ} to the time-independent case (when formally, $H_0=1$).

\begin{lemma}
Under Assumption C, the series in \eqref{D-series} converges in $L^2(\Omega)$, and
for any $p\geq 2$, $t>0$ and $x,z,z'\in \bR$,
\begin{equation}
\label{D-bound-rough}
\|D_z u(t,x)\|_p \leq C_1(t) \int_0^t p_{t-r}(x-z)dr
\end{equation}
an
\begin{equation}
\label{D-bound-incr-rough}
\|D_{z+z'} u(t,x)-D_{z}u(t,x)\|_p
\leq C_1(t) \int_0^t \Big(|\Delta_{t-r}(z-x,z')| + p_{t-r}(x-z) N_{t-r}(z')  \Big)dr,
\end{equation}
where the constant $C_1(t)$ is the same as in Lemma \ref{Thm-existence-heat-rough}.
\end{lemma}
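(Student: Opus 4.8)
The plan is to prove both assertions chaos-term by chaos-term, following exactly the scheme used for Theorem~\ref{main-th3}, but replacing the maximal-principle estimates \eqref{in1} and \eqref{in2} by their rough-case analogues \eqref{in1'} and \eqref{in2'} (and, for the increment, by \eqref{ineq-norm-difference f_j} and \eqref{ineq-norm-difference g_j}), and replacing the exponential summation of the series by the Mittag-Leffler-type bound \eqref{ML-function} together with \eqref{Gamma1}. Throughout, $c>0$ denotes a constant depending only on $H$ that may change from line to line, and I write $A_n(z,x;t)=nI_{n-1}(\widetilde{f}_n(\cdot,z,x;t))$ as in \eqref{D-series}.

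First I would establish \eqref{D-bound-rough}. As in \eqref{bound-An} one has $\bE|A_n(z,x;t)|^2\le n!\sum_{j=1}^n\|f_j^{(n)}(\cdot,z,x;t)\|_{\cP_0^{\otimes(n-1)}}^2$. I decompose each $f_j^{(n)}$ by \eqref{decomp-fjn}, apply Minkowski's inequality to bring the $\cP_0^{\otimes(n-1)}$-norm inside the $dr$-integral, and factor the tensor norm into the $f_{j-1}$-block and the $g_{n-j}$-block. Bounding the first inner integral by \eqref{in1'} (with $t$ replaced by $r$) and the second by \eqref{in2'}, the factor $p_{t-r}(x-z)$ comes out, and after $r\le t$, $t-r\le t$,
\[
\|f_j^{(n)}(\cdot,z,x;t)\|_{\cP_0^{\otimes(n-1)}}^2
\le \frac{c^{\,n}\,t^{(n-1)(H+1)}}{\big((j-1)!\,(n-j)!\big)^{H+1}}\Big(\int_0^t p_{t-r}(x-z)\,dr\Big)^2 .
\]
Summing over $j$ via $\sum_{j=1}^n\binom{n-1}{j-1}^{H+1}\le\big(\sum_{j=1}^n\binom{n-1}{j-1}\big)^{H+1}=2^{(n-1)(H+1)}$ (valid since $H+1>1$), then using $n!/((n-1)!)^{H+1}=n/((n-1)!)^{H}$ and $n\le 2^{n-1}$, yields $\bE|A_n(z,x;t)|^2\le c^{\,n}\,t^{(n-1)(H+1)}\big((n-1)!\big)^{-H}\big(\int_0^t p_{t-r}(x-z)\,dr\big)^2$. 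The series $\sum_n A_n$ converges in $L^2(\Omega)$ by \eqref{ML-function} with $a=H$, and hypercontractivity combined with \eqref{ML-function} at $a=H/2$ upgrades this to the $L^p$ bound \eqref{D-bound-rough}, with constant exactly $C_1(t)=c_1\exp\!\big(c_2(p-1)^{1/H}t^{(H+1)/H}\big)$ as in Lemma~\ref{Thm-existence-heat-rough}.

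For the increment \eqref{D-bound-incr-rough} I would run the same machinery on $\bE|A_n(z+z',x;t)-A_n(z,x;t)|^2\le n!\sum_{j=1}^n\|f_j^{(n)}(\cdot,z+z',x;t)-f_j^{(n)}(\cdot,z,x;t)\|_{\cP_0^{\otimes(n-1)}}^2$, justified exactly as \eqref{bound-An}. Since in \eqref{decomp-fjn} the point $z$ enters only through the terminal kernel of the $f_{j-1}$-block and the initial kernel of the $g_{n-j}$-block, I write $F(r,z)$ and $G(r,z)$ for those two inner time-integrals and expand the increment by the telescoping identity $FG(z+z')-FG(z)=\big(F(z+z')-F(z)\big)G(z)+F(z+z')\big(G(z+z')-G(z)\big)$. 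For the first summand I apply \eqref{ineq-norm-difference f_j} (with $t\to r$) to $F(z+z')-F(z)$ and the plain bound \eqref{in2'} to $G(z)$; for the second I apply \eqref{in1'} to $F(z+z')$ and the increment bound \eqref{ineq-norm-difference g_j} to $G(z+z')-G(z)$. The $j$- and $n$-summations are then identical to the first part, and everything reduces to bounding, uniformly in $n,j$, the $dr$-integral of
\[
N_r(z')\,p_{t-r}(x-z)\;+\;\big|\Delta_{t-r}(z-x,z')\big|\;+\;p_{t-r}(x-z)\,N_{t-r}(z').
\]

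The last two summands are already the target integrand of \eqref{D-bound-incr-rough}, so the remaining, and main, obstacle is the first one: it carries the modulus $N_r(z')$ at the \emph{terminal time} $r$ of the $f_{j-1}$-block rather than the $N_{t-r}(z')$ demanded by the target (note $N_r$ is non-increasing in $r$, so $N_r\ge N_{t-r}$ precisely on $r<t/2$, the wrong direction). The key step will therefore be the comparison
\[
\int_0^t N_r(z')\,p_{t-r}(x-z)\,dr
\;\le\; C\int_0^t\Big(\big|\Delta_{t-r}(z-x,z')\big|+p_{t-r}(x-z)\,N_{t-r}(z')\Big)\,dr,
\]
which I expect to be the only genuinely new analytic point. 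I would prove it by a regional estimate of these one-dimensional time integrals: substituting $s=t-r$ turns the left side into $\int_0^t N_{t-s}(z')p_s(x-z)\,ds$, and on $s<t/2$ one has $N_{t-s}\le N_s$ so that region is dominated by the target $\int N_s p_s\,ds$, while on $s>t/2$ the heat kernel is non-singular and comparable to $p_t(x-z)$ (the ratio $p_s/p_t$ is bounded on $(t/2,t)$ in $d=1$), reducing the estimate to controlling $\int_0^{t/2}N_u(z')\,du\lesssim |z'|^{1/4}$, which matches the $|z'|^{1/4}$-scale produced by the target terms. Once the integrand is rewritten in target form, the series in \eqref{D-series} sums exactly as in the first part, producing the factor $C_1(t)$ and the required $dr$-integral; all the remaining bookkeeping (Stirling/Mittag-Leffler, hypercontractivity) is identical to Lemma~\ref{Thm-existence-heat-rough}.
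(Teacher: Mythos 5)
Your proof of \eqref{D-bound-rough} is correct and essentially identical to the paper's: same decomposition \eqref{decomp-fjn}, same inputs \eqref{in1'}--\eqref{in2'}, same combinatorial and Mittag--Leffler summation, yielding the bound $\bE|A_n|^2\le c^n t^{(n-1)(H+1)}((n-1)!)^{-H}(\int_0^t p_{t-r}(x-z)dr)^2$ as in \eqref{An-rough}. For the increment \eqref{D-bound-incr-rough} you also set up the same telescoping of the $f_{j-1}$-block against the $g_{n-j}$-block that the paper uses in \eqref{ineq-norm-different f_jn}, and you have correctly isolated the real difficulty, which the paper's own proof passes over in silence: the $f$-block increment is controlled via \eqref{ineq-norm-difference f_j} only by $N_r(z')$, the modulus at the terminal time $r$ of that block, so the resulting term $\int_0^t N_r(z')\,p_{t-r}(x-z)\,dr$ is not of the target form.

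The gap is that the comparison inequality you propose to bridge this,
\begin{equation*}
\int_0^t N_r(z')\,p_{t-r}(x-z)\,dr \;\le\; C\int_0^t\Big(|\Delta_{t-r}(z-x,z')|+p_{t-r}(x-z)\,N_{t-r}(z')\Big)\,dr,
\end{equation*}
is false, and the flaw sits exactly at your step ``on $s>t/2$ the heat kernel is comparable to $p_t(x-z)$.'' That comparison is only one-sided: $p_s(x-z)\le\sqrt2\,p_t(x-z)$ there, but to re-absorb the result into the target you would also need $\int_{t/2}^t p_s(x-z)\,ds\gtrsim t\,p_t(x-z)$, which fails when $|x-z|\gg\sqrt t$, since in that regime $\int_{t/2}^t p_s(x-z)\,ds\asymp \frac{t^2}{|x-z|^2}\,p_t(x-z)$. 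Concretely, take $t=1$, $q=|x-z|\to\infty$, and $z'$ of the same sign as $z-x$ with $|z'|=q^{-2}$. Laplace-type estimates give for the left side $\asymp q^{-1/2}\int_0^1(1-s)^{-1/8}p_s(q)\,ds\asymp q^{-9/4}p_1(q)$ (the singular weight $(1-s)^{-1/8}$ sits where $p_s(q)$ is largest), whereas on the right the $N_{t-r}$-term is $\asymp q^{-1/2}\int_0^1 s^{-1/8}p_s(q)\,ds\asymp q^{-5/2}p_1(q)$ and the $\Delta$-term is $\asymp q^{-3}p_1(q)$; the ratio blows up like $q^{1/4}$, so no constant $C$ can work. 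Note that your split-at-$t/2$ argument \emph{does} become correct if the right-hand side carries the dilated kernel $p_{4(t-r)}$, i.e.\ the $\Phi$-form appearing in Proposition \ref{prop-4-1}: for $u\in[t/2,t]$ one has $p_t(x-z)\le 2e^{-|x-z|^2/(4t)}p_{4u}(x-z)$, and the extra Gaussian factor absorbs precisely the loss identified above. But with the undilated $p_{t-r}$, as \eqref{D-bound-incr-rough} and your reduction demand, the key lemma of your proposal is not provable, so the argument does not close; repairing it requires either aiming directly at the weaker dilated form, or replacing the lossy modulus $N_r(z')$ for the $f$-block by the finer, Lipschitz-like modulus $\min(1,|z'|u^{-1/2})$ that can be absorbed into the $\Delta$-term.
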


\begin{proof}
We first prove that series in \eqref{D-series} converges in $L^2(\Omega)$. We argue as in the proof of Theorem \ref{main-th3}.
Formulas \eqref{bound-An} and \eqref{eq-norm-f_j^n-heat} are still valid. On the right hand side of \eqref{eq-norm-f_j^n-heat}, we estimate separately the two integrals, using \eqref{in1'} for the first integral and \eqref{in2'} for the second integral.
It follows that
\begin{align*}
	& \|f_j^{(n)}(\cdot,z,x;t)\|_{\cP_0^{\otimes (n-1)}} \leq c^{\frac{n-1}{2}} \frac{t^{\frac{(n-1)(H+1)}{2}}}{[(j-1)!]^{\frac{H+1}{2}} [(n-j)!]^{\frac{H+1}{2}}}\int_0^t p_{t-r}(x-z) dr.
\end{align*}

Coming back to \eqref{bound-An}, we obtain:
\begin{align}
\nonumber
\bE|A_n(z,x;t)|^2 & \leq n! c^{n-1} t^{(n-1)(H+1)}\left(\int_0^t p_{t-r}(x-z)dr\right)^2 \sum_{j=1}^{n}\frac{1}{[(j-1)!]^{H+1}[(n-j)!]^{H+1}} \\
\label{An-rough}
& \leq
\frac{1}{[(n-1)!]^H} c^{n-1} t^{(n-1)(H+1)} \left(\int_0^t p_{t-r}(x-z)dr\right)^2.
\end{align}
Using the fact that:
\begin{equation}
\label{combin-ineq}
\sum_{j=1}^n \binom{n-1}{j-1}^{H+1}\leq \sum_{j=1}^n \binom{n-1}{j-1}^{2}\leq
\left(\sum_{j=1}^n \binom{n-1}{j-1}\right)^{2}=4^{n-1}.
\end{equation}
From this, we deduce that the series in \eqref{D-series} converges in $L^2(\Omega)$, and
\[
\bE|D_z u(t,x)|^2 =\sum_{n\geq 1}\bE|A_n(z,x;t)|^2 \leq \left(\int_0^t p_{t-r}(x-z)dr\right)^2\sum_{n\geq 1}
\frac{c^{n-1} t^{(n-1)(H+1)}}{[(n-1)!]^H}.
\]
This proves \eqref{D-bound-rough} for $p=2$.
For higher moments, we use hypercontractivity and \eqref{An-rough}.

\medskip

Next, we prove \eqref{D-bound-incr-rough} with a similar argument. We denote $\bar{z}=z+z'$. By the chaos expansion \eqref{D-series}, we have
\begin{align}
\label{Du-Du-series}
	D_{z}u(t,x) - D_{\bar{z}}u(t,x)
	= \sum_{n\geq 1} n I_{n-1} \left( \widetilde{f}_n(\cdot,z,x;t) - \widetilde{f}_n(\cdot,\bar{z},x;t) \right)
	=:\sum_{n\geq 1} A^D_n(z,z',x;t).
\end{align}
The same argument leading to \eqref{bound-An} shows that:
\begin{equation} \label{bound-A^Dn}
	\bE|A^D_n(z,z',x;t)|^2
	\leq n! \sum_{j=1}^{n} \|f_j^{(n)}(\cdot,z,x;t) - f_j^{(n)}(\cdot,\bar{z},x;t) \|_{\cP_0^{\otimes (n-1)}}^2.
\end{equation}
Similar to \eqref{eq-norm-f_j^n-heat}, by triangle inequality and Minkowski's inequality, we have:
 \begin{align} \label{ineq-norm-different f_jn}
	& \|f_j^{(n)}(\cdot,z,x;t) - f_j^{(n)}(\cdot,\bar{z},x;t) \|_{\cP_0^{\otimes (n-1)}} \nonumber \\
	\le& \int_0^t
\left\{\left( \int_{\{0<t_1<\ldots<t_{j-1}<r\}}\| f_{j-1} (\pmb{t_{j-1}},\bullet,r,z) \|_{\cP_0^{\otimes (j-1)}}  d\pmb{t_{j-1}} \right) \right. \nonumber \\
	& \left( \int_{\{r<t_j<\ldots<t_{n-1}<t\}} \| g_{n-j}(\pmb{t_{j:n-1}},\bullet,r,z,t,x) - g_{n-j}(\pmb{t_{j:n-1}},\bullet,r,\bar{z},t,x) \|_{\cP_0^{\otimes (n-j)}} d\pmb{t_{j:n-1}} \right) \nonumber \\
	& + \left( \int_{\{r<t_j<\ldots<t_{n-1}<t\}} \| g_{n-j}(\pmb{t_{j:n-1}},\bullet,r,\bar{z},t,x) \|_{\cP_0^{\otimes (n-j)}} d\pmb{t_{j:n-1}}  \right) \nonumber \\
	& \left. \left( \int_{\{0<t_1<\ldots<t_{j-1}<r\}} \| f_{j-1} (\pmb{t_{j-1}},\bullet,r,z) - f_{j-1} (\pmb{t_{j-1}},\bullet,r,\bar{z}) \|_{\cP_0^{\otimes (j-1)}} d\pmb{t_{j-1}}  \right)\right\}dr.
\end{align}

The four integrals appearing inside the $dr$ integral above are bounded using \eqref{in1'}, \eqref{in2'}, \eqref{ineq-norm-difference f_j} and
\eqref{ineq-norm-difference g_j}. Thus,
\begin{align*}
	&\|f_j^{(n)}(\cdot,z,x;t) - f_j^{(n)}(\cdot,\bar{z},x;t) \|_{\cP_0^{\otimes (n-1)}} \leq c^{n-1} \dfrac{t^{(n-1)(H+1)/2}}{[(j-1)!]^{(H+1)/2} [(n-j)!]^{(H+1)/2}}
\\
& \quad \times \int_0^t \Big( |\Delta_{t-r}(z-x,z')| + p_{t-r}(x-z) N_{t-r}(z')  \Big)dr.
\end{align*}

Returning to \eqref{bound-A^Dn}, and using the combinatorial inequality \eqref{combin-ineq}, we obtain:
\begin{align*}
	\bE|A^D_n(z,z',x;t)|^2
	\le \dfrac{c^{n-1} t^{(n-1)(H+1)}}{[(n-1)!]^H} \left[ \int_0^t \Big(|\Delta_{t-r}(z-x,z')| + p_{t-r}(z-x) N_{t-r}(z') \Big) dr \right]^2.
\end{align*}
Relation \eqref{D-bound-incr-rough} follows from \eqref{ML-function} and hypercontractivity.
\end{proof}

\begin{lemma}
Under Assumption C holds, the series in \eqref{D2-series} converges in $L^2(\Omega)$, and
	for any $p\geq 2$, $t>0$ and $x,z,z',w,w'\in \bR$,
	\begin{equation} \label{D2-bound-rough}
		\|D^2_{w,z} u(t,x)\|_p \leq C_1(t) \widetilde{f}_2(w,z,x;t),
	\end{equation}
	and
\begin{align}
\label{D2-bound-incr-rough}
	&\big\| D^2_{w+w',z+z'} u(t,x) - D^2_{w+w',z} u(t,x) - D_{w,z+z'}u(t,x)+ D_{w,z}u(t,x)  \big\|_p \nonumber \\
	\leq& C_1(t) \left(\int_{0<\theta<r<t} H(\theta,w,w',r,z,z',x;t)d\theta dr+ \int_{0<\theta<r<t} H(r,z,z',\theta,w,w',x;t)dr d\theta\right),
\end{align}
where $H(\theta,w,w',r,z,z',x;t)$ is given by the second factor on the right-hand side of (4.5) of \cite{NXZ} with $(r,s,y,y',z,z')$ replaced by $(\theta,r,z,z',w,w')$, i.e.
\begin{align*}
& H(\theta,w,w',r,z,z',x;t)=p_{t-r}(x-z)N_{\theta}(w')\Big[ |\Delta_{r-\theta}(z-w,z')|+p_{r-\theta}(z-w)N_{r-\theta}(z')\Big]\\
& \quad +p_{t-r}(x-z) \Big[ |R_{r-\theta}(z-w,z',w')| +|\Delta_{r-\theta}(z-w,z')| N_{r-\theta}(w')\\
& \quad \quad \quad +|\Delta_{r-\theta}(w-z,w')| N_{r-\theta}(z')+p_{r-\theta}(z-w) N_{r-\theta}(z') N_{r-\theta}(w')\Big]\\
& \quad +p_{r-\theta}(z+z'-w)N_{\theta}(w')\Big[ |\Delta_{t-r}(z-x,z')|+p_{t-r}(x-z)N_{t-r}(z') \Big]\\
& \quad+\Big[ |\Delta_{r-\theta}(w-z-z',w')|+p_{r-\theta}(z+z'-w)N_{r-\theta}(w') \Big] \\ & \quad \quad \quad \times \Big[ |\Delta_{t-r}(z-x,z')|+p_{t-r}(x-z)N_{t-r}(z')\Big].
\end{align*}
where $C_1(t)$ is the same as in Lemma \ref{Thm-existence-heat-rough}.
\end{lemma}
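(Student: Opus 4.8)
The plan is to establish both bounds with the scheme already used for Theorem~\ref{Thm-D2-bound}, substituting the rough-case building blocks \eqref{in1'} and \eqref{in2'} for the maximal-principle estimates \eqref{in1}--\eqref{in2}, and replacing the exponential summation by the Mittag-Leffler bound \eqref{ML-function} together with the combinatorial inequality \eqref{combin-ineq}. For \eqref{D2-bound-rough} I would start from the chaos expansion \eqref{D2-series} and the bound \eqref{bound-Bn}. Fixing $i<j$, I would insert the decomposition \eqref{dec-fij}, move the $\cP_0^{\otimes(n-2)}$-norm inside the $d\theta\,dr$ integral by Minkowski's inequality, and bound the three inner time-integrals of $f_{i-1}$, $g_{j-i-1}$ and $g_{n-j}$ by \eqref{in1'}, \eqref{in2'} and \eqref{in2'} respectively. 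This yields the two heat-kernel products $f_2(w,z,x;t)$ and (for $j<i$) $f_2(z,w,x;t)$, which combine into $\widetilde f_2(w,z,x;t)$ as in the regular case, times the weight $c^{n-2}t^{(n-2)(H+1)}\big[(i-1)!(j-i-1)!(n-j)!\big]^{-(H+1)}$. Summing over $1\le i<j\le n$ and symmetrically over $j<i$ through the multinomial analogue $\sum\binom{n-2}{i-1,j-i-1,n-j}^{H+1}\le 9^{n-2}$ of \eqref{combin-ineq} collapses the factorials into $[(n-2)!]^{-H}$, and \eqref{ML-function} gives both the $L^2$-convergence of \eqref{D2-series} and the $p=2$ case; hypercontractivity upgrades to general $p$.

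For the rectangular increment \eqref{D2-bound-incr-rough} I would apply the same machinery to the mixed second difference of the kernel, namely to
\[
\widetilde f_n(\cdot,w{+}w',z{+}z',x;t)-\widetilde f_n(\cdot,w{+}w',z,x;t)-\widetilde f_n(\cdot,w,z{+}z',x;t)+\widetilde f_n(\cdot,w,z,x;t).
\]
The analogue of \eqref{bound-Bn} reduces the $L^2$-norm of the associated chaos term to $n!\sum_{i\ne j}\big\|\,\cdot\,\big\|_{\cP_0^{\otimes(n-2)}}^2$ of the corresponding mixed difference of $f_{ij}^{(n)}$, and the two integrals $\int_{0<\theta<r<t}H(\theta,w,w',r,z,z',x;t)\,d\theta\,dr$ and $\int_{0<\theta<r<t}H(r,z,z',\theta,w,w',x;t)\,dr\,d\theta$ in the statement arise from the ranges $i<j$ and $j<i$ respectively.

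The crux is the finite-difference expansion of the mixed difference of $f_{ij}^{(n)}$. In \eqref{dec-fij} the first factor $\int f_{i-1}(\cdots,\theta,w)$ depends on $w$ only, the third factor $\int g_{n-j}(\cdots,r,z,t,x)$ on $z$ only, and the middle factor $\int g_{j-i-1}(\cdots,\theta,w,r,z)$ on both, so the mixed difference acts on a product $A(w)B(w,z)C(z)$. Splitting $\delta_z(BC)=\delta_zB\cdot C(z)+B(w,z{+}z')\,\delta_zC$ and then applying $\delta_w$ produces exactly four groups: $\delta_wA\cdot\delta_zB\cdot C(z)$, a value of $A$ times the rectangular increment of $B$ times $C(z)$, $\delta_wA$ times $B(w,z{+}z')$ times $\delta_zC$, and a value of $A$ times $\delta_wB|_{z{+}z'}$ times $\delta_zC$. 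These match the four terms of the prescribed $H$ once I bound: the value and $w$-increment of the first factor by \eqref{in1'} and \eqref{ineq-norm-difference f_j} (the $N_\theta(w')$ factors); the value, the sink-increment, the source-increment and the rectangular increment of the middle factor by \eqref{in2'}, \eqref{ineq-norm-difference g_j-2}, \eqref{ineq-norm-difference g_j} and \eqref{rectangular-diff} (the $p_{r-\theta}(z{-}w)$, $|\Delta_{r-\theta}|$, $|R_{r-\theta}|$ and $N_{r-\theta}N_{r-\theta}$ factors); and the value and source-increment of the third factor by \eqref{in2'} and \eqref{ineq-norm-difference g_j} (the $p_{t-r}(x{-}z)$ and $|\Delta_{t-r}(z{-}x,z')|+p_{t-r}(x{-}z)N_{t-r}(z')$ factors). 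Each of these inequalities carries the common weight $c^{(n-2)/2}t^{(n-2)(H+1)/2}\big[(i-1)!(j-i-1)!(n-j)!\big]^{-(H+1)/2}$.

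The main obstacle will be organizing this Leibniz-type regrouping so that the evaluation points ($z$ versus $z{+}z'$, and the heat-kernel arguments $z{-}w$, $w{-}z{-}z'$, $z{+}z'{-}w$) match the prescribed $H$ verbatim, rather than the combinatorics, which is routine: after squaring, summing over $i<j$ via \eqref{combin-ineq} again produces the weight $[(n-2)!]^{-H}$ times the square of the $d\theta\,dr$-integral of $H$, whence \eqref{ML-function} yields the $L^2$ bound with the constant $C_1(t)$ and hypercontractivity gives the general-$p$ statement, exactly as for \eqref{D2-bound-rough}.
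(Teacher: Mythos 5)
Your proposal is correct and takes essentially the same approach as the paper: the same reduction via \eqref{bound-Bn} and the decomposition \eqref{dec-fij}, the identical four-term Leibniz splitting of the mixed rectangular difference (the paper's identity $a_1a_2a_3-b_1d_2a_3-a_1c_2b_3+b_1b_2b_3=(a_1-b_1)(a_2-c_2)a_3+b_1(a_2-c_2-d_2+b_2)a_3+(a_1-b_1)c_2(a_3-b_3)+b_1(c_2-b_2)(a_3-b_3)$), and the same use of \eqref{in1'}, \eqref{in2'}, \eqref{ineq-norm-difference f_j}, \eqref{ineq-norm-difference g_j}, \eqref{ineq-norm-difference g_j-2} and \eqref{rectangular-diff}, followed by the combinatorial bound \eqref{combin-ineq}, the Mittag-Leffler estimate \eqref{ML-function}, and hypercontractivity. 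Your assignment of the estimates to the value/increments of the three factors (including the source- versus sink-increment distinction for the middle factor) matches the paper's $K_1$--$K_4$ exactly, so there is nothing to add.
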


\begin{proof}
We first show that the series in \eqref{D2-series} converges in $L^2(\Omega)$. We proceed as in the proof of Theorem \ref{Thm-D2-bound}.
Note that relations \eqref{bound-Bn} and \eqref{ineq-fijn} still hold. We use \eqref{in1'} and \eqref{in2'} to estimate the three integrals on the right-hand side of \eqref{ineq-fijn}. For $1\leq i < j\leq n$, we have
\[
\big\| f_{ij}^{(n)}(\cdot,w,z,x;t) \big\|_{\cP_0^{\otimes (n-2)}}
	\leq \dfrac{c^{n-2} t^{(n-2)(H+1)/2}}{\left[ (i-1)!(j-i-1)!(n-j)! \right]^{(H+1)/2}}f_2(w,z,x;t),
\]
and for $1\leq j<i\leq n$,
\[
 \big\| f_{ij}^{(n)}(\cdot,w,z,x;t) \big\|_{\cP_0^{\otimes (n-2)}} \leq  \dfrac{c^{n-2} t^{(n-2)(H+1)/2}}{\left[ (j-1)!(i-j-1)!(n-i)! \right]^{(H+1)/2}}f_2(z,w,x;t).
\]
Thus, introducing these two estimates in \eqref{bound-Bn}, we have
\begin{align*}
	&\bE |B_n(w,z,x;t)|^2 \leq
n!c^{n-2} t^{(n-2)(H+1)} \dfrac{1}{[(n-2)!]^{H+1}} \\
& \quad \left[ \sum_{i,j=1,i<j}^n \binom{n-2}{i-1,j-i-1}^{H+1} f_2^2(w,z,x;t)
	+ \sum_{i,j=1,i>j}^n \binom{n-2}{j-1,i-j-1}^{H+1} f_2^2(z,w,x;t)\right] \\
	& \quad \leq c^{n-2} t^{(n-2)(H+1)} \dfrac{1}{[(n-2)!]^H} \left(f_2^2(w,z,x;t) +f_2^2(z,w,x;t) \right).
\end{align*}
This proves that $\sum_{n\geq 2} B_n(w,z,x;t)$ converges in $L^2(\Omega)$. By hypercontractivity, 
\begin{align*}
	\|D^2_{z,w}u(t,x)\|_p \leq \sum_{n\geq 2}(p-1)^{\frac{n-2}{2}}\|B_n(w,z,x;t)\|_2 \leq c_1 \widetilde{f}_2(w,z,x;t) \exp \left( c_2 (p-1)^{\frac{1}{H}} t^{\frac{1+H}{H}} \right).
\end{align*}

Next, we prove \eqref{D2-bound-incr-rough}. We denote $\bar{z}=z+z'$ and $\bar{w}=w+w'$.
By \eqref{D2-series}, we have
\begin{align} \label{eq-D2-difference}
	&D^2_{w,z} u(t,x) - D^2_{\bar{w},z} u(t,x) - D_{w,\bar{z}}u(t,x) + D_{\bar{w},\bar{z}}u(t,x) \nonumber \\
	=& \sum_{n\geq 2} n(n-1) I_{n-2} \left( \widetilde{f}_n(\cdot,w,z,x;t)  - \widetilde{f}_n(\cdot,\bar{w},z,x;t) - \widetilde{f}_n(\cdot,w,\bar{z},x;t)+ \widetilde{f}_n(\cdot,\bar{w},\bar{z},x;t) \right) \nonumber \\
	=:&\sum_{n\geq 1} B^D_n(w,w',z,z',x;t).
\end{align}

Similarly to \eqref{bound-Bn}, we have
\begin{align}
\nonumber
	\bE|B^D_n(w,w',z,z',x;t)|^2 \leq n! \sum_{i,j=1, i\neq j}^{n} & \big\| f_{ij}^{(n)}(\cdot,w,z,x;t) - f_{ij}^{(n)}(\cdot,\bar{w},z,x;t)\\
\label{bound-B^Dn}
	&  - f_{ij}^{(n)}(\cdot,w,\bar{z},x;t) + f_{ij}^{(n)}(\cdot,\bar{w},\bar{z},x;t) \big\|_{\cP_0^{\otimes (n-1)}}^2.
\end{align}

Assume first that $i<j$. Using \eqref{dec-fij} we have:
\begin{align*}
	& f_{ij}^{(n)}(\pmb{x_{n-2}},w,z,x;t) - f_{ij}^{(n)}(\pmb{x_{n-2}},\bar{w},z,x;t)-	f_{ij}^{(n)}(\pmb{x_{n-2}},w,\bar{z},x;t) +f_{ij}^{(n)}(\pmb{x_{n-2}},\bar{w},\bar{z},x;t) =\\
	& \quad \int_{\{0<t_1<\ldots<t_{i-1}<\theta<t_i<\ldots< t_{j-2}<r<t_{j-1}<\ldots<t_{n-2}<t\}} d\pmb{t_{n-2}}drd\theta  \\
& \quad \quad \Big( f_{i-1} (\pmb{t_{i-1}}, \pmb{x_{i-1}},\theta,w)
g_{j-i-1}(\pmb{t_{i:j-2}},\pmb{x_{i:j-2}},\theta,w,r,z)
g_{n-j}(\pmb{t_{j-1:n-2}},\pmb{x_{j-1:n-2}},r,z,t,x) \\
& \quad -f_{i-1} (\pmb{t_{i-1}}, \pmb{x_{i-1}},\theta,\bar{w})
g_{j-i-1}(\pmb{t_{i:j-2}},\pmb{x_{i:j-2}},\theta,\bar{w},r,z)
g_{n-j}(\pmb{t_{j-1:n-2}},\pmb{x_{j-1:n-2}},r,z,t,x) \\
& \quad - f_{i-1} (\pmb{t_{i-1}}, \pmb{x_{i-1}},\theta,w)
g_{j-i-1}(\pmb{t_{i:j-2}},\pmb{x_{i:j-2}},\theta,w,r,\bar{z})
g_{n-j}(\pmb{t_{j-1:n-2}},\pmb{x_{j-1:n-2}},r,\bar{z},t,x) \\
& \quad +f_{i-1} (\pmb{t_{i-1}}, \pmb{x_{i-1}},\theta,w')
g_{j-i-1}(\pmb{t_{i:j-2}},\pmb{x_{i:j-2}},\theta,\bar{w},r,\bar{z})
g_{n-j}(\pmb{t_{j-1:n-2}},\pmb{x_{j-1:n-2}},r,\bar{z},t,x) \Big).
\end{align*}
We denote
\begin{align*}
a_1 &= f_{i-1}, (\cdot,\theta,w) \quad \quad \quad \ \
a_2= g_{j-i-1}(\cdot,\theta,w,r,z), \quad \
a_3 = g_{n-j}(\cdot,r,z,t,x), \\
b_1 &= f_{i-1} (\cdot,\theta,\bar{w}), \quad \quad \quad \ \ b_2 = g_{j-i-1}(\cdot,\theta,\bar{w},r,\bar{z}), \quad \
b_3 = g_{n-j}(\cdot,r,\bar{z},t,x), \\
	d_2 &= g_{j-i-1}(\cdot,\theta,\bar{w},r,z), \quad c_2 = g_{j-i-1}(\cdot,\theta,w,r,\bar{z}).
\end{align*}
Using the decomposition
\begin{align*}
	& a_1a_2a_3 - b_1d_2a_3 - a_1c_2b_3 + b_1b_2b_3 = (a_1-b_1) (a_2-c_2)a_3+\\
	& \quad  b_1(a_2-c_2-d_2+b_2)a_3 + (a_1-b_1)c_2(a_3-b_3)+ b_1(c_2-b_2)(a_3-b_3),
\end{align*}
and applying the Minkowski's inequality, we obtain
\begin{align*}
	& \big\| f_{ij}^{(n)}(\cdot,w,z,x;t)- f_{ij}^{(n)}(\cdot,\bar{w},z,x;t)
	-	f_{ij}^{(n)}(\cdot,w,\bar{z},x;t)+ f_{ij}^{(n)}(\cdot,\bar{w},\bar{z},x;t) \big\|_{\cP_0^{\otimes (n-2)}}  \\
& \quad \quad \quad \quad \quad \leq K_1+K_2+K_3+K_4,
\end{align*}
where
\begin{align*}
K_1&=	 \int_{0<\theta<r<t}  \Big(
\int_{T_{i-1}(\theta)} \|f_{i-1}(\pmb{t_{i-1}},\bullet,\theta,w)-f_{i-1}(\pmb{t_{i-1}},\bullet,\theta,\bar{w})\|_{\cP_0^{\otimes (i-1)}} d\pmb{t_{i-1}}\Big)\\
& \
\Big(\int_{\{\theta<t_i<\ldots< t_{j-2}<r\}} \|g_{j-i-1}(\pmb{t_{i:j-2}},\bullet,\theta,w,r,z)-
g_{j-i-1}(\pmb{t_{i:j-2}},\bullet,\theta,w,r,\bar{z})\|_{\cP_0^{\otimes (j-i-1)}} d\pmb{t_{i:j-2}}\Big)\\
& \
\Big(\int_{\{r<t_{j-1}<\ldots<t_{n-2}<t\}} \| g_{n-j}(\pmb{t_{j-1:n-2}},\bullet,r,z,t,x) \|_{\cP_0^{\otimes (n-j)}} d\pmb{t_{j-1:n-2}} \Big) d\theta dr, \\
K_2&=	\int_{0<\theta<r<t} \Big(\int_{T_{i-1}(\theta)} \|f_{i-1}(\pmb{t_{i-1}},\bullet,\theta,\bar{w})\|_{\cP_0^{\otimes (i-1)}} d\pmb{t_{i-1}} \Big)  \\
& \
\Big(\int_{\{\theta<t_i<\ldots< t_{j-2}<r\}} \|g_{j-i-1}(\pmb{t_{i:j-2}},\bullet,\theta,w,r,z) - g_{j-i-1}(\pmb{t_{i:j-2}},\bullet,\theta,w,r,\bar{z})-\\
& \quad
g_{j-i-1}(\pmb{t_{i:j-2}},\bullet,\theta,\bar{w},r,z)+
g_{j-i-1}(\pmb{t_{i:j-2}},\bullet,\theta,\bar{w},r,\bar{z})\|_{\cP_0^{\otimes (j-i-1)}} d\pmb{t_{i:j-2}}\Big)\\
& \ \Big(\int_{\{r<t_{j-1}<\ldots<t_{n-2}<t\}} \|g_{n-j}(\pmb{t_{j-1:n-2}},\bullet,r,z,t,x)\|_{\cP_0^{\otimes (n-j)}} d\pmb{t_{j-1:n-2}} 	\Big)d\theta dr, \\
K_3&=\int_{0<\theta<r<t}
\Big(\int_{T_{i-1}(\theta)} \|f_{i-1}(\pmb{t_{i-1}},\bullet,\theta,w)-f_{i-1}(\pmb{t_{i-1}},\bullet,\theta,\bar{w})\|_{\cP_0^{\otimes (i-1)}} d\pmb{t_{i-1}}\Big)\\
& \
\Big(\int_{\{\theta<t_i<\ldots< t_{j-2}<r\}} \|g_{j-i-1}(\pmb{t_{i:j-2}},\bullet,\theta,w,r,\bar{z})\|_{\cP_0^{\otimes (j-i-1)}} d\pmb{t_{i:j-2}}\Big)\\
& \
\Big(\int_{\{r<t_{j-1}<\ldots<t_{n-2}<t\}} \| g_{n-j}(\pmb{t_{j-1:n-2}},\bullet,r,z,t,x)-g_{n-j} (\pmb{t_{j-1:n-2}},\bullet,r,\bar{z},t,x) \|_{\cP_0^{\otimes (n-j)}} d\pmb{t_{j-1:n-2}} \Big) d\theta dr, \\
K_4&=\int_{0<\theta<r<t} \Big(\int_{T_{i-1}(\theta)} \|f_{i-1}(\pmb{t_{i-1}},\bullet,\theta,\bar{w})\|_{\cP_0^{\otimes (i-1)}} d\pmb{t_{i-1}}\Big) \\
&\ \Big( \int_{\{r<t_{j-1}<\ldots<t_{n-2}<t\}} \|g_{j-i-1}(\pmb{t_{j-1:n-2}},\bullet,\theta,w,r,\bar{z})-
g_{j-i-1}(\pmb{t_{j-1:n-2}},\bullet,\theta,\bar{w},r,\bar{z})\|_{\cP_0^{\otimes (n-j)}} d\pmb{t_{j-1:n-2}}\Big) \\
& \
\Big(\int_{\{\theta<t_i<\ldots< t_{j-2}<r\}} \|g_{n-j}(\pmb{t_{i:j-2}},\bullet,r,z,t,x) - g_{n-j}(\pmb{t_{i:j-2}},\bullet,r,\bar{z},t,x) \|_{\cP_0^{\otimes (j-i-1)}} d\pmb{t_{i:j-2}}\Big)
d\theta dr.
\end{align*}

We now use inequalities \eqref{in1'}, \eqref{in2'},
\eqref{ineq-norm-difference f_j},
\eqref{ineq-norm-difference g_j},
\eqref{ineq-norm-difference g_j-2} and
\eqref{rectangular-diff}. Thus, for $i<j$,
\begin{align*}
	& \big\| f_{ij}^{(n)}(\cdot,w,z,x;t)- f_{ij}^{(n)}(\cdot,\bar{w},z,x;t)
	-	f_{ij}^{(n)}(\cdot,w,\bar{z},x;t)+ f_{ij}^{(n)}(\cdot,\bar{w},\bar{z},x;t) \big\|_{\cP_0^{\otimes (n-2)}}  \leq \\
& \quad \quad \quad \frac{c^{n-2}t^{(n-2)\frac{H+1}{2}}}{[(i-1)! (j-i-1)!(n-j)!]^{\frac{H+1}{2}}}
\int_{0<\theta<r<t}H(\theta,w,w',r,z,z',x;t)d\theta dr.
\end{align*}
A similar inequality holds for $j<i$, by replacing $(i,\theta,w,w', j,r,z,z')$ and
$(j,r,z,z',i,\theta,w,w')$. Returning to \eqref{bound-B^Dn}, we conclude that
\begin{align*}
& \bE|B_n^D(w,w',z,z',x;t)|^2 \leq \frac{C^{n-2}t^{(n-2)(H+1)}}{[(n-2)!]^{H+1}}\\
& \quad
\left\{\left( \int_{0<\theta<r<t}H(\theta,w,w',r,z,z',x;t)d\theta dr \right)^2+
\left( \int_{0<r<\theta<t}H(r,z,z',\theta,w,w'x;t)d\theta dr \right)^2\right\}.
\end{align*}
Finally, \eqref{D2-bound-incr-rough} follows by hypercontractivity.
\end{proof}

Next, we introduce two notations from \cite{NXZ}. For $t \in \bR_{+}$ and $x' \in \bR$, let $\Phi_{t,x'}$ be an operator on $\cB(\bR)$, the set of real Borel measurable functions on $\bR$, given by
\begin{align*}
	(\Phi_{t,x'} g) (x)
	=& g(x+x') {\bf 1}_{\{|x'| > \sqrt{t}\}} + g(x) N_t(x'),
\end{align*}
for $g \in \cB(\bR)$. For $0 < r < s$ and $y',z' \in \bR$, let $\Lambda_{r,z',s,y'}: \cB(\bR) \times \cB(\bR) \to \cB(\bR^2)$ be an operator given by
\begin{align*}
	\Lambda_{r,z',s,y'}(g_1,g_2) (x,y)
	=& g_1(x) \left( \Phi_{s-r,y'} g_2 \right)(y) N_r(z')
	+ g_1(x) \left( \Phi_{s-r,y'} \Phi_{s-r,-z'} g_2 \right)(y) \\
	& + \left( \Phi_{t-s,-y'} g_1 \right)(x) g_2(y+y') N_r(z')
	+ \left( \Phi_{t-s,-y'} g_1 \right)(x) \left( \Phi_{s-r,-z'} g_2 \right)(y+y'),
\end{align*}
for $g_1,g_2 \in \cB(\bR)$.
The following result is the analogue of Proposition 4.1 of \cite{NXZ} for the time-independent noise.

\begin{proposition}
\label{prop-4-1}
If Assumption C holds, then for any $p\geq 2$, $t>0$ and $x,z,z',w,w' \in \bR$,
\[
\|D_{z+z'}u(t,x)-D_z u(t,x) \|_p \leq C_1(t) \int_0^t (\Phi_{t-r,z'}p_{4(t-r)})(z-x)dr
\]
and
\begin{align*}
& \| D^2_{w+w',z+z'} u(t,x) - D^2_{w+w',z} u(t,x) - D_{w,z+z'}u(t,x)+ D_{w,z}u(t,x)  \big\|_p \leq \\
& \quad C_1(t) \left( \int_{0<\theta<r<t}\Lambda_{\theta,w',r,z'}(p_{4(t-r)},p_{4(r-\theta)})(x-z,z-w)d\theta dr+\right.\\
& \quad \quad \quad \left. \int_{0<r<\theta<t}\Lambda_{r,z',\theta,w'}(p_{4(t-\theta)},p_{4(\theta-r)})(x-w,w-z)drd\theta \right),
\end{align*}
where $C_1(t)$ is the same as in Lemma \ref{Thm-existence-heat-rough}.
\end{proposition}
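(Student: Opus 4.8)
The plan is to deduce both inequalities directly from the two preceding lemmas, namely the bounds \eqref{D-bound-incr-rough} and \eqref{D2-bound-incr-rough}, by replacing the explicit heat-kernel increment quantities $|\Delta_{\cdot}|$, $|R_{\cdot}|$ and $p_\cdot N_\cdot$ appearing there with the operator notation $\Phi$ and $\Lambda$. The probabilistic work (the chaos expansions, the moment bounds via the kernel estimates \eqref{in1'}-\eqref{rectangular-diff}, and hypercontractivity) is already contained in those lemmas; what remains is purely deterministic, namely a pointwise domination of the integrands by the corresponding $\Phi$/$\Lambda$ expressions, after which one integrates in $dr$ (respectively $d\theta\,dr$) and absorbs the extra constant into $C_1(t)$.

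The two deterministic estimates I would isolate first are, for all $t>0$ and $x,x',x''\in\bR$,
\[
p_t(x)\le 2\,p_{4t}(x),\qquad
|\Delta_t(x,x')|+p_t(x)N_t(x')\le c\,(\Phi_{t,x'}p_{4t})(x),
\]
together with the second-order analogue
\[
|R_t(x,x',x'')|+|\Delta_t(x,x')|N_t(x'')+|\Delta_t(x,x'')|N_t(x')+p_t(x)N_t(x')N_t(x'')\le c\,(\Phi_{t,x'}\Phi_{t,-x''}p_{4t})(x).
\]
Each follows by splitting according to whether the increment variable is small or large relative to $\sqrt t$: when $|x'|\le\sqrt t$ one uses the H\"older-type regularity of the heat kernel, $|p_t(x+x')-p_t(x)|\le c\,t^{-1/8}|x'|^{1/4}p_{4t}(x)=c\,p_{4t}(x)N_t(x')$ (and its rectangular version for $R_t$), while when $|x'|>\sqrt t$ one has $N_t(x')=1$ and bounds $|\Delta_t(x,x')|$ crudely by $p_t(x+x')+p_t(x)\le 2p_{4t}(x+x')+2p_{4t}(x)$, which is exactly the shifted term carried by $\Phi_{t,x'}$. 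These are precisely the estimates underlying Proposition 4.1 of \cite{NXZ}, and I would simply record them for $d=1$.

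With these in hand, the first inequality is immediate: applying the second displayed estimate with $(t,x,x')=(t-r,z-x,z')$ (and using $p_t(y)=p_t(-y)$, $N_t(-x')=N_t(x')$ to align arguments) bounds the integrand of \eqref{D-bound-incr-rough} pointwise by $c\,(\Phi_{t-r,z'}p_{4(t-r)})(z-x)$, and integrating in $r$ gives the claim. For the second inequality I would expand $\Lambda_{\theta,w',r,z'}(p_{4(t-r)},p_{4(r-\theta)})(x-z,z-w)$ into its four product terms and match them one-to-one against the four clusters in the definition of $H(\theta,w,w',r,z,z',x;t)$: the cluster with prefactor $p_{t-r}(x-z)N_\theta(w')$ against the first $\Lambda$-term, the cluster carrying $|R_{r-\theta}(z-w,z',w')|$ and its companion $\Delta\cdot N$ and $p\cdot N\cdot N$ terms against the second $\Lambda$-term (this is where the composed operator $\Phi_{r-\theta,z'}\Phi_{r-\theta,-w'}$ enters), and the remaining two clusters against the third and fourth $\Lambda$-terms. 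Each match is an application of the two deterministic estimates above, together with $p_t\le 2p_{4t}$ on the outer Gaussian factors.

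The main obstacle is the bookkeeping in this last step: one must verify that every one of the several increment terms of $H$ is dominated by the intended $\Lambda$-term, which requires carefully tracking the arguments $z-w$, $x-z$, $z+z'-w$, the signs of the shift variables $\pm z'$, $\pm w'$, and the two-sided nature of the composition $\Phi\circ\Phi$ that encodes the four outcomes of a rectangular increment. The symmetric case $j<i$ is handled by the stated substitution $(i,\theta,w,w')\leftrightarrow(j,r,z,z')$, which is exactly why \eqref{D2-bound-incr-rough} carries the two integrals over $\{0<\theta<r<t\}$ and $\{0<r<\theta<t\}$; these become the two $\Lambda$-integrals in the statement.
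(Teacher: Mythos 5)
Your overall strategy is exactly the paper's: the paper's proof of Proposition \ref{prop-4-1} consists precisely of invoking \eqref{D-bound-incr-rough} and \eqref{D2-bound-incr-rough} and then citing the proof of Proposition 4.1 of \cite{NXZ} for the pointwise domination of those integrands by the $\Phi$/$\Lambda$ expressions; you simply reconstruct that deterministic domination explicitly, and your matching of the four clusters of $H$ with the four terms of $\Lambda$ is the intended one.

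There is, however, one concrete error in what you wrote down: your second displayed deterministic estimate is false as stated. The term $|\Delta_t(x,x'')|N_t(x')$ must be $|\Delta_t(x,-x'')|N_t(x')$; the shift $-x''$ is forced both by the definition $R_t(x,x',x'')=p_t(x+x'-x'')-p_t(x+x')-p_t(x-x'')+p_t(x)$ (whose four corners lie in the directions $\{0,+x'\}\times\{0,-x''\}$) and by the operator $\Phi_{t,-x''}$, whose expansion produces $p_{4t}(x-x'')$ but never $p_{4t}(x+x'')$. Concretely, take $|x'|\le \sqrt{t}<|x''|$ and $x=-x''$: then the left-hand side of your inequality is at least
\[
|\Delta_t(-x'',x'')|\,N_t(x')=|p_t(0)-p_t(-x'')|\,N_t(x')\;\approx\;(2\pi t)^{-1/2}N_t(x'),
\]
while the right-hand side equals $c\,N_t(x')\bigl[p_{4t}(-2x'')+p_{4t}(-x'')\bigr]$, which tends to $0$ as $|x''|/\sqrt{t}\to\infty$; so no constant $c$ works. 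Fortunately this is a slip in the auxiliary statement rather than in the argument: the term actually produced by $H$ is $|\Delta_{r-\theta}(w-z,w')|N_{r-\theta}(z')$, which by evenness of the heat kernel equals $|\Delta_{r-\theta}(z-w,-w')|N_{r-\theta}(z')$, i.e.\ it carries the correct shift $-w'$, and with the corrected sign the estimate holds (in the regime $|x''|>\sqrt{t}$ one bounds $|p_t(x-x'')-p_t(x)|\le 2p_{4t}(x-x'')+2p_{4t}(x)$, both terms being present in the expansion of $\Phi_{t,x'}\Phi_{t,-x''}p_{4t}$). Everything else — the bound $p_t\le 2p_{4t}$, the first-order estimate and its application with $(t-r,z-x,z')$, the reflection identities used to align arguments, the absorption of $H$-dependent constants into $C_1(t)$, and the two time-ordered regions producing the two $\Lambda$ integrals — is correct.
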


\begin{proof}
We use \eqref{D-bound-incr-rough} and \eqref{D2-bound-incr-rough}. Since the integrands appearing in these two relations are the same as in (4.4) and (4.5) of \cite{NXZ}, we can bound these integrands exactly as in the proof of Proposition 4.1 of \cite{NXZ}.
\end{proof}

\subsection{Proof of Theorem \ref{main} under Assumption C}
\label{section-main-C}

In this section, we give the proof of Theorem \ref{main} in the rough noise case.

\medskip

(i) (Limiting Covariance) We use a similar method as in the proof of Proposition 1.2 of \cite{NSZ}, based on the Feynman-Kac formula given in Appendix \ref{appA}. For simplicity, we assume that $t=s$. The general case is similar.

Recall that we have the chaos expansion $F_R(t) = \sum_{n\geq 1} {\bf J}_{n,R} (t)$, where ${\bf J}_{n,R}(t)$ is given by \eqref{def-JnR}. We examine the first chaos. By direct calculation,
\begin{align} \label{eq-norm-J_1R(t)}
	\bE \left[ \left( {\bf J}_{1,R} (t) \right)^2 \right]
	=& \langle g_{1,R}(\cdot;t),g_{1,R}(\cdot;t) \rangle_{\cP_0} \nonumber \\
	=& \int_{B_R^2} \int_0^t  \int_0^t \int_{\bR}  e^{-i\xi (x-x')} \cF p_{t-t_1}(\xi) \overline{\cF p_{t-t_1'}(\xi)} \mu(d\xi) dt_1 dt_1' dxdx' \nonumber \\
	=& \int_0^t  \int_0^t \int_{\bR}  e^{-\frac{t_1+t_1'}{2}|\xi|^2} \left|\int_{B_R}e^{-i\xi x}dx\right|^2 \mu(d\xi)dt_1 dt_1' \nonumber \\
	=& 4\pi R \int_0^t  \int_0^t  \int_{\bR} e^{-\frac{t_1+t_1'}{2}|\xi|^2} \ell_R(\xi) \mu(d\xi)dt_1dt_1'.
\end{align}
Using \eqref{lem3-2-NSZ},
\begin{align} \label{ineq-norm-J_1R(t)}
	\bE \left[ \left( {\bf J}_{1,R} (t) \right)^2 \right]
	\le 4\pi R \int_0^t \int_0^t \int_{\bR} \ell_R(\xi) \mu(d\xi)dt_1 dt_1'
	\le 4\pi R t^2 \left( \varepsilon + \dfrac{C(\varepsilon)}{R} \right),
\end{align}
which implies that
\[
\lim_{R \to \infty} \frac{1}{R} \bE \left[ {\bf J}_{1,R} (t)^2 \right] = 0.
\]
For the variance of ${\bf J}_{n,R} (t)$ with $n \ge 2$, we have
\begin{align*}
	& \bE \left[ \left( {\bf J}_{n,R} (t) \right)^2 \right]
	=n! \langle  g_n(\cdot;t), \widetilde{g}_n(\cdot;t) \rangle_{\cP_0^{\otimes n}}\\
	=& n! \int_{B_R^2} dxdx' \int_{\bR^n} \cF f_n(\cdot,x;t) (\xi_1, \ldots, \xi_n) \overline{\cF \widetilde{f}_n(\cdot,x';t)(\xi_1, \ldots, \xi_n)} \mu(d\xi_1) \ldots \mu(d\xi_n)  \\
	=& \sum_{\rho \in \Sigma_n} \int_{B_R^2} dxdx' \int_{\bR^n} \mu(d\xi_1) \ldots \mu(d\xi_n) \int_{T_n(t)} d\pmb{t_n} \int_{T_n(t)} d\pmb{t_n'} \\
	& \times e^{-i(\xi_1+\ldots+\xi_n)\cdot (x-x')} \prod_{j=1}^n \cF p_{t_{j+1}-t_j}(\xi_1+\ldots+\xi_j) \overline{\cF p_{t_{j+1}'-t_j'}(\xi_{\rho(1)}+\ldots+\xi_{\rho(j)})},
\end{align*}
with
the convention $t_{n+1} = t_{n+1}' = t$.
 From this formula, we can see that the integrand only depends on $x-x'$. We change of variable $x' = x-z$, we have the following formula:
\begin{align*}
	\int_{B_R^2} h(x-x') dxdx'
	= \int_{\bR} \int_{B_R \cap (B_R+z)} h(z) dxdz
	= \int_{\bR} h(z) \text{Leb}([-R,R] \cap [-R+z,R+z]) dz
\end{align*}
Hence, using the formula, we have
\begin{align*}
	&\bE \left[ \left( {\bf J}_{n,R} (t) \right)^2 \right]
	= \sum_{\rho \in \Sigma_n} \int_{\bR} \text{Leb}([-R,R] \cap [-R+z,R+z]) dz \int_{\bR^n} \mu(d\xi_1) \ldots \mu(d\xi_n) \\
	& \quad \int_{T_n(t)}\int_{T_n(t)}
	 e^{-i(\xi_1+\ldots+\xi_n)\cdot z} \prod_{j=1}^n \cF p_{t_{j+1}-t_j}(\xi_1+\ldots+\xi_j) \overline{\cF p_{t_{j+1}'-t_j'}(\xi_{\rho(1)}+\ldots+\xi_{\rho(j)})} d\pmb{t_n}  d\pmb{t_n'}.
\end{align*}
Note that
$\prod_{j=1}^n \cF p_{t_{j+1}-t_j}(\xi_1+\ldots+\xi_j)
	= \bE \left[ e^{i\sum_{j=1}^n B_{t-t_j} \xi_j} \right]$,
where $(B_t)_{t\geq 0}$ is a Brownian motion.
Hence,
\begin{align} \label{eq-second moment of JnR}
	\bE \left[ \left( {\bf J}_{n,R} (t) \right)^2 \right]
	=& \sum_{\rho \in \Sigma_n} \int_{\bR} \text{Leb}([-R,R] \cap [-R+z,R+z]) dz \int_{\bR^n} \mu(d\xi_1) \ldots \mu(d\xi_n) \nonumber \\
	& \int_{T_n(t)} d\pmb{t_n} \int_{T_n(t)} d\pmb{t_n'}
	e^{-i(\xi_1+\ldots+\xi_n)\cdot z}
	\bE \left[ e^{i\sum_{j=1}^n B_{t-t_j} \xi_j} \right]
	\bE \left[ e^{i\sum_{j=1}^n B_{t-t_j'} \xi_{\rho(j)}} \right] \nonumber \\
=&\dfrac{1}{n!} \int_{\bR} \text{Leb}([-R,R] \cap [-R+z,R+z])  \bE \left[ \left( \cI_{t,t}^{1,2}(z) \right)^n \right]dz.
\end{align}
Similarly to (3.19) of \cite{NSZ}, it can be proved that
\begin{align} \label{ineq-series-claim}
	\sum_{n\geq 2} \dfrac{1}{n!} \int_{\bR} \bE \left[ \left| \cI_{t,t}^{1,2}(z) \right|^n \right] dz < \infty.
\end{align}
Since $$\dfrac{\text{Leb}([-R,R] \cap [-R+z,R+z])}{R}
	= \dfrac{\min\{0, 2R-|z|\}}{R}$$
is bounded by 2 and converges to 2 as $R \to \infty$, by the dominated convergence theorem,
\begin{align*}
	\lim_{R \to \infty} \dfrac{1}{R}\sum_{n\geq 2} \bE \left[ \left( {\bf J}_{n,R} (t) \right)^2 \right]
	= 2\sum_{n\geq 2}\dfrac{1}{n!} \int_{\bR}  \bE \left[ \left( \cI_{t,t}^{1,2}(z) \right)^n \right]dz.
\end{align*}

\medskip

(ii) (QCLT) By applying a version of Proposition 2.4 of \cite{NXZ} for the time-independent noise, we have:
\[
d_{TV}\left(\frac{F_R(t)}{\sigma_R(t)},Z\right)\leq \frac{2}{\sigma_R^2(t)} \sqrt{{\rm Var}(\langle DF_R(t),-DL^{-1}F_R(t)\rangle_{\cH})} 
\]
and
\[
{\rm Var}(\langle DF_R(t),-DL^{-1}F_R(t)\rangle_{\cH})\leq 3\cA
\]
where $Z \sim N(0,1)$ and
\begin{align*}
\cA&=C_H^3\int_{\bR^6}
\| D_{z,y}^2 F_R(t)-D_{z,y'}^2 F_R(t)- D_{z',y}^2 F_R(t)+D_{z',y'}^2 F_R(t)\|_4 \\
& \quad \quad \quad \| D_{w,y}^2 F_R(t)-D_{w,y'}^2 F_R(t)- D_{w',y}^2 F_R(t)+D_{w',y'}^2 F_R(t)\|_4 \\
& \quad \quad \quad \|D_z F_R(t)-D_{z'} F_R(t)\|_4 \|D_{w} F_R(t)-D_{w'} F_R(t)\|_4\\
& \quad \quad \quad
|y-y'|^{2H-2} |z-z'|^{2H-2} |w-w'|^{2H-2}dydy' dzdz' dw dw'.
\end{align*}

Since $\sigma_R^2(t) \sim K(t,t)R$ (by part (i)), it is enough to prove that
\begin{equation}
\label{A-bound}
\cA \leq C_t R,
\end{equation}
where $C_t>0$ is a constant depending on $t$ (and $H$).

Note that $D_{z}F_{R}(t)-D_{z'}F_{R}(t)=\int_{B_R}\big(D_z u(t,x)-D_{z'} u(t,x)\big) dx$. Therefore, by Minkowski's inequality,
\[
\|D_{z}F_{R}(t)-D_{z'}F_{R}(t)\|_4 \leq \int_{B_R}\|D_z u(t,x)-D_{z'}u(t,x)\|_4 dx.
\]
Similarly,
\begin{align*}
& \| D_{z,y}^2 F_R(t)-D_{z,y'}^2 F_R(t)- D_{z',y}^2 F_R(t)+D_{z',y'}^2 F_R(t)\|_4 \leq \\
& \quad \int_{B_R}\|D_{z,y}^2 u(t,x)-D_{z,y'}^2 u(t,x)- D_{z',y}^2 u(t,x)+D_{z',y'}^2 u(t,x) \big) \|_4 dx.
\end{align*}
Hence $\cA \leq \cA'$, where, after a change of variables,
\begin{align*}
& \cA' = C_H^3 \int_{B_R^4}\int_{\bR^6}\|D_{z+z'} u(t,x_1)-D_{z} u(t,x_1)\|_4 \|D_{w+w'} u(t,x_2)-D_{w} u(t,x_2)\|_4\\
& \quad \| D_{z+z',y+y'}^2 u(t,x_3)-D_{z,y+y'}^2 u(t,x_3)- D_{z+z',y}^2 u(t,x_3)+D_{z,y}^2 u(t,x_3)\|_4 \\
& \quad \| D_{w+w',y+y'}^2 u(t,x_4)-D_{w,y+y'}^2 u(t,x_4)- D_{w+w',y}^2 u(t,x_4)+D_{w,y}^2 u(t,x_4)\|_4 \\
& \quad \quad \quad
|y'|^{2H-2} |z'|^{2H-2} |w'|^{2H-2}
dydy' dzdz' dw dw' dx_1 dx_2 dx_3 dx_4.
\end{align*}
Therefore, it is enough to prove that
\[
\cA' \leq C_t R.
\]

By Proposition \ref{prop-4-1}, we have:
\begin{align*}
\cA' & \leq C_1^4(t)\int_{[-R,R]^4} \int_{\bR^6}\left(\int_0^t (\Phi_{t-r',z'}p_{4(t-r')})(z-x_1)dr'\right)\left( \int_0^t
(\Phi_{t-\theta',w'}p_{4(t-\theta')})(w-x_2)d\theta'\right)\\
& \quad \times \left( \int_{0<r<s<t} \Lambda_{r,z',s,y'}(p_{4(t-s)},p_{4(s-r)})(x_3-y,y-z)drds+\right.\\
& \qquad \qquad \qquad \left.\int_{0<s<r<t} \Lambda_{s,y',r,z'}(p_{4(t-r)},p_{4(r-s)})(x_3-z,z-y)dsdr \right)\\
& \quad \times \left( \int_{0<\theta<s'<t} \Lambda_{\theta,w',s',y'}(p_{4(t-s')},p_{4(s'-\theta)})(x_4-y,y-w)d\theta ds'+\right.\\
& \qquad \qquad \qquad  \left.\int_{0<s'<\theta<t} \Lambda_{s',y',\theta,w'}(p_{4(t-\theta)},p_{4(\theta-s')})(x_4-w,w-y)ds' d\theta \right)\\
& \quad  |y'|^{2H-2}|z'|^{2H-2} |w'|^{2H-2}dydy' dzdz' dwdw' dx_1 dx_2 dx_3 dx_4=:C_1^4(t) \sum_{i=1}^4 \cA_{i}.
\end{align*}
The 4 terms $\cA_{i},i=1,2,3,4$ correspond to the integrals over the sets $\{r<s,\theta<s'\}$, $\{r<s,s'<\theta\}$,
$\{s<r,\theta<s'\}$, $\{s<r,s'<\theta\}$, respectively.

We discuss only $\cA_{1}$, the other 3 terms being similar. For any $r,r',s,s',\theta,\theta'\in [0,t]$ fixed with $r<s$ and $\theta<s'$, the integral on $[-R,R]^4 \times \bR^6$ coincides with the one given as upper bound for $\cA_0$ on the display equation on page 33, lines 9-11 of \cite{NXZ}. This integral is shown in this reference to be bounded by $C_t R \cB_0$, where $C_t>0$ is a constant depending on $t$ and $H$, and
$\cB_0$ is given on the display after (4.8) of \cite{NXZ}:
\begin{align*}
\cB_0&=(t-r')^a (t-\theta')^a \left[ (s-r)^a r^a +(s-r)^{2a}+(t-s)^ar^a +(t-s)^a(s-r)^a\right]\\
&\qquad \qquad \qquad \quad \quad \left[ (s'-\theta)^a \theta^a +(s'-\theta)^{2a}+(t-s')^a \theta^a +(t-s')^a(s'-\theta)^a\right],
\end{align*}
where $a=\frac{H}{2}-\frac{1}{4}$. Hence,
\begin{align*}
\cA_1 & \leq CR \int_{[0,t]^6}1_{\{r<s,\theta<s'\}}\cB_0 drdr' dsds' d\theta d\theta'\\
&=CR \int_{[0,t]^6} \varphi(r,s,\theta)\varphi(r',s',\theta')drdr' dsds' d\theta d\theta'=CR \left(\int_{[0,t]^3}\varphi(r,s,\theta)drdsd\theta\right)^2,
\end{align*}
where for the second line we used the change of variables $\theta \leftrightarrow r'$, and we denoted
\[
\varphi(r,s,\theta)=1_{\{r<s\}}(t-\theta)^a \left[ (s-r)^a r^a +(s-r)^{2a}+(t-s)^ar^a +(t-s)^a(s-r)^a\right].
\]
Finally, it is not difficult to see that
\[
\int_{[0,t]^3}\varphi(r,s,\theta)drdsd\theta=C t^{3a+3}.
\]
This shows that $\cA_1 \leq C_t R$ and concludes the proof of the theorem.

\medskip

(iii) Use the same method as Theorem 1.1 of \cite{NSZ}. We first show the tightness, and then establish the finite dimensional convergence.

We will use frequently the fact that
\begin{equation}
\label{ell-R}
\left|\int_{B_R} e^{-i \xi x} dx\right|^2 =4\pi R \ell_{R}(\xi) \quad \mbox{where} \quad
\ell_R(\xi) = \dfrac{\sin^2(R\xi)}{\pi R \xi^2}.
\end{equation}

We will use the following result.

\begin{lemma}[Lemma 3.2 of \cite{NSZ}]
\label{lem3-2-NSZ}
For any $\varepsilon > 0$, there exists a constant $C(\varepsilon)>0$ depending on $\varepsilon$ such that $$\int_{\bR} \ell_R(\xi) \mu(d\xi)\leq \e+\frac{C(\e)}{R}.$$
\end{lemma}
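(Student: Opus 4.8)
The plan is to exploit the fact that, under Assumption C, the spectral density $c_H|\xi|^{1-2H}$ vanishes at the origin (since $1-2H>0$), and to split the integral $\int_\bR \ell_R(\xi)\mu(d\xi)$ into a low-frequency and a high-frequency part at a threshold $\delta=\delta(\e)$ to be chosen. I would first record three elementary facts about the kernel $\ell_R$ defined in \eqref{ell-R}: it is non-negative; it integrates to one, i.e. $\int_\bR \ell_R(\xi)\,d\xi=1$ (by the substitution $u=R\xi$ and $\int_\bR \frac{\sin^2 u}{u^2}\,du=\pi$), so that $\ell_R$ behaves as an approximate identity concentrating at $0$; and it satisfies the pointwise bound $\ell_R(\xi)=\frac{\sin^2(R\xi)}{\pi R\xi^2}\le \frac{1}{\pi R\xi^2}$ for all $\xi\neq 0$.

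For the low-frequency part, on the region $\{|\xi|\le \delta\}$ I would use that $H<\tfrac12$ makes $|\xi|\mapsto |\xi|^{1-2H}$ increasing, so that $c_H|\xi|^{1-2H}\le c_H\delta^{1-2H}$ there. Combined with $\int_\bR\ell_R(\xi)\,d\xi=1$ this yields the $R$-uniform bound
\[
\int_{\{|\xi|\le \delta\}}\ell_R(\xi)\,\mu(d\xi)\le c_H\delta^{1-2H}\int_\bR \ell_R(\xi)\,d\xi=c_H\delta^{1-2H}.
\]
Choosing $\delta=(\e/c_H)^{1/(1-2H)}$ makes this term at most $\e$. This is the only place where the rough regime $H<\tfrac12$ is essential: it is exactly the vanishing of the density at the origin that lets the mass of the approximate identity near $\xi=0$ be absorbed into an arbitrarily small constant.

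For the high-frequency part, on $\{|\xi|>\delta\}$ I would discard the oscillatory numerator via $\ell_R(\xi)\le \frac{1}{\pi R\xi^2}$ and use the integrability of $|\xi|^{-1-2H}$ away from the origin (valid since $H>0$):
\[
\int_{\{|\xi|>\delta\}}\ell_R(\xi)\,\mu(d\xi)\le \frac{c_H}{\pi R}\int_{\{|\xi|>\delta\}}|\xi|^{-1-2H}\,d\xi=\frac{c_H}{\pi R}\,\frac{\delta^{-2H}}{H}=:\frac{C(\e)}{R},
\]
where the constant depends on $\e$ only through $\delta=\delta(\e)$. Adding the two estimates gives $\int_\bR \ell_R(\xi)\,\mu(d\xi)\le \e+C(\e)/R$, as claimed.

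I do not expect a genuine obstacle here: the argument is the standard approximate-identity splitting, and the only points requiring care are purely bookkeeping, namely tracking the dependence of the constant on $\delta$ (hence on $\e$) in the high-frequency bound, and verifying that the two integrability conditions—$|\xi|^{1-2H}$ bounded near $0$ and $|\xi|^{-1-2H}$ integrable at infinity—both hold precisely because $H\in(\tfrac14,\tfrac12)\subset(0,\tfrac12)$.
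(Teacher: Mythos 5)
Your proof is correct. The paper itself does not prove this statement at all --- it imports it verbatim as Lemma 3.2 of \cite{NSZ} --- and your approximate-identity splitting (bounding the low-frequency part by $c_H\delta^{1-2H}$ using $\int_{\bR}\ell_R(\xi)\,d\xi=1$, and the high-frequency part by $\frac{c_H\delta^{-2H}}{\pi H R}$ using $\ell_R(\xi)\le\frac{1}{\pi R\xi^2}$) is exactly the standard argument behind the cited result, with the constant $C(\varepsilon)$ correctly tracked through the choice $\delta=(\varepsilon/c_H)^{1/(1-2H)}$, so nothing further is needed.
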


{\em Step 1 (tightness).} We prove that for any $R>T$, $p\geq 2$ and $0<s<t<T$,
\begin{equation} \label{rough-tight}
	\|F_R(t)-F_R(s)\|_p \leq C R^{1/2} (t-s)^{1/2},
\end{equation}
where $C>0$ is a constant depending on $T,H$ and $p$.
By Kolmogorov's continuity theorem, the process $F_R=\{F_R(t)\}_{t\in [0,T]}$ has a continuous modification (which we denote also $F_R$), for any $R>0$. The family $\{F_R\}_{R>0}$ is tight in $C[0,T]$, by Theorem 12.3 of \cite{billingsley68}.

Using the chaos expansion, we can still write $F_R(t)-F_R(s)= \sum_{n\geq 1} I_n (g_{n,R}(\cdot;t,s))$, where $g_{n,R} = g_{n,R}^{(1)} + g_{n,R}^{(2)}$ are given in \eqref{eq-decomposition-1}. Hence, by orthogonality and hypercontractivity,
\begin{align}
\label{rough-tight-1}
	\|F_R(t) - F_R(s)\|_p
	\le \sum_{n\geq 1} (p-1)^{n/2}\|I_n (g_{n,R}^{(1)}(\cdot;t,s))\|_2 + \sum_{n\geq 1}  (p-1)^{n/2} \|I_n (g_{n,R}^{(2)}(\cdot;t,s))\|_2.
\end{align}

We compute the first sum in \eqref{rough-tight-1}. For $n=1$, we have:
\begin{align*}
	g_{1,R}^{(1)} (x_1;t,s) = \int_{B_R} \int_0^s (p_{t-t_1}(x-x_1) - p_{s-t_1}(x-x_1)) dt_1 dx.
\end{align*}
Recalling \eqref{ell-R}, we have
\begin{align*}
	& \|I_1 (g_{1,R}^{(1)}(\cdot;t,s))\|_2^2
	= \|g_{1,R}^{(1)}(\cdot;t,s)\|_{\cP_0}^2 \\
	=& \int_{B_R^2} dxdx' \int_0^s dt_1 \int_0^s dt_1' \int_{\bR} \mu(d\xi) e^{-i(x-x')\xi} \left( \cF p_{t-t_1}(\xi) - \cF p_{s-t_1}(\xi) \right) \left( \cF p_{t-t_1'}(\xi) - \cF p_{s-t_1'}(\xi) \right) \\
	=& 4\pi R \int_0^s dt_1 \int_0^s dt_1' \int_{\bR} \ell_R(\xi) \mu(d\xi) \left( \cF p_{s-t_1}(\xi) - \cF p_{t-t_1}(\xi) \right) \left( \cF p_{s-t_1'}(\xi) - \cF p_{t-t_1'}(\xi) \right).
\end{align*}
Note that, for any $t_1 \in [0,1]$,
\begin{align*}
	\cF p_{s-t_1}(\xi) - \cF p_{t-t_1}(\xi)
	=& e^{ -\frac{(s-t_1)|\xi|^2}{2} } \left(1-e^{-\frac{(t-s)|\xi|^2}{2}}  \right)
	\le \frac{(t-s)|\xi|^2}{2} e^{ -\frac{(s-t_1)|\xi|^2}{2} },
\end{align*}
and $\cF p_{s-t_1}(\xi) - \cF p_{t-t_1}(\xi) \in [0,1]$. Hence,
\begin{align*}
	\|I_1 (g_{1,R}^{(1)}(\cdot;t,s))\|_2^2
	\le& 2\pi R (t-s) s \int_{\bR} \ell_R(\xi) |\xi|^2 \left(\int_0^s e^{ -\frac{(s-t_1)|\xi|^2}{2} }  dt_1 \right) \mu(d\xi)
	\leq 4\pi R(t-s)s  \int_{\bR} \ell_R(\xi) \mu(d\xi).
\end{align*}
Using Lemma \ref{lem3-2-NSZ}, we obtain:
\begin{align} \label{ineq-I_1-g1-n=1-rough-tight}
	\|I_1 (g_{1,R}^{(1)}(\cdot;t,s))\|_2^2 \le C R (t-s),
\end{align}
where $C$ is a constant that depends on $T,H$.
For $n \ge 2$, we have
\begin{align*}
	& \|I_n (g_{n,R}^{(1)}(\cdot;t,s))\|_2^2
	= n! \langle g_{n,R}^{(1)}(\cdot;t,s), \tilde g_{n,R}^{(1)}(\cdot;t,s) \rangle_{\cP_0} \\
	=& \sum_{\rho \in \Sigma_n} \int_{B_R^2} dxdx' \int_{T_n(s)} d\pmb{t} \int_{T_n(s)} d\pmb{t'} \int_{\bR^n} \mu(d\xi_1) \ldots \mu(d\xi_n) e^{-i(x-x') (\xi_1+\ldots+\xi_n)} \\
	& \prod_{j=1}^{n-1} \cF p_{t_{j+1}-t_j}(\xi_1+\ldots+\xi_j) \left( \cF p_{t-t_n}(\xi_1+ \ldots + \xi_n) - \cF p_{s-t_n}(\xi_1+ \ldots + \xi_n) \right) \\
	& \times \prod_{j=1}^{n-1} \cF p_{t_{j+1}'-t_j'}(\xi_{\rho(1)}+\ldots+\xi_{\rho(j)}) \left( \cF p_{t-t_n'}(\xi_1+ \ldots + \xi_n) - \cF p_{s-t_n'}(\xi_1+ \ldots + \xi_n) \right).
\end{align*}
Note that
$\prod_{j=1}^n \cF p_{t_{j+1}-t_j}(\xi_1+\ldots+\xi_j) = \bE \left[ e^{i\sum_{j=1}^n (B_t-B_{t_j}) \xi_j} \right]$, where $(B_t)_{t\geq 0}$ is a Brownian motion. We have
\begin{align*}
	\|I_n (g_{n,R}^{(1)}(\cdot;t,s))\|_2^2
	=& \sum_{\rho \in \Sigma_n} \int_{B_R^2} dxdx' \int_{T_n(s)} d\pmb{t} \int_{T_n(s)} d\pmb{t'} \int_{\bR^n} \mu(d\xi_1) \ldots \mu(d\xi_n) e^{-i(x-x') (\xi_1+\ldots+\xi_n)} \\
	& \times \left( \bE \left[ e^{i\sum_{j=1}^n (B_s-B_{t_j}) \xi_j} \right] - \bE \left[ e^{i\sum_{j=1}^n (B_t-B_{t_j}) \xi_j} \right] \right) \\
	&\times \left( \bE \left[ e^{i\sum_{j=1}^n (B_s-B_{t_j'}) \xi_{\rho(j)}} \right] - \bE \left[ e^{i\sum_{j=1}^n (B_t-B_{t_j'}) \xi_{\rho(j)}} \right] \right).
\end{align*}
Since
\begin{align*}
	\bE \left[ e^{i\sum_{j=1}^n (B_s-B_{t_j}) \xi_j} \right] - \bE \left[ e^{i\sum_{j=1}^n (B_t-B_{t_j}) \xi_j} \right] \in [0,1],
\end{align*}
we can bound the second parenthesis above by 1, so that the remaining $d\pmb{t'}$ integral is equal to $s^n/n!$. Together with the sum over $\rho$, this yields the factor $s^n$. For the first parenthesis, we use relation (5.5) of \cite{NSZ}:
\begin{align*}
	\bE \left[ e^{i\sum_{j=1}^n (B_s-B_{t_j}) \xi_j} \right] - \bE \left[ e^{i\sum_{j=1}^n (B_t-B_{t_j}) \xi_j} \right] \leq \frac{t-s}{2}(\sum_{j=1}^n \xi_j)^2 \bE \left[ e^{i\sum_{j=1}^n (B_s-B_{t_j}) \xi_j} \right].
\end{align*}
Since the new integrand of the $d\pmb{t}$ integral is symmetric in $(t_1,\ldots,t_n)$, we obtain:
\begin{align*}
	\|I_n (g_{n,R}^{(1)}(\cdot;t,s))\|_2^2
	& \leq \dfrac{2(t-s)s^n \pi R}{n!} \int_{[0,s]^n} d\pmb{t} \int_{\bR^n} \ell_R(\sum_{j=1}^n\xi_j) \mu(d\xi_1) \ldots \mu(d\xi_n) \\
	& \quad \quad \quad \times (\sum_{j=1}^n \xi_j)^2 \bE \left[ e^{i\sum_{j=1}^n (B_s-B_{t_j}) \xi_j} \right] \\
	& \leq  \dfrac{2(t-s)s^n}{n!} \frac{R}{T} \int_{[0,s]^n} d\pmb{t} \int_{\bR^n} \mu(d\xi_1) \ldots \mu(d\xi_n) \bE \left[ e^{i\sum_{j=1}^n (B_s-B_{t_j}) \xi_j} \right],
\end{align*}
where we used the inequality $\ell_R(\xi) \le \frac{1}{\pi R |\xi|^2}\leq \frac{1}{\pi T |\xi|^2}$ for any $R>T$.
Note that
\begin{align*}
	\bE \left[ e^{i\sum_{j=1}^n (B_s-B_{t_j}) \xi_j} \right]
	= \bE \left[ e^{-\mathrm{Var} \sum_{j=1}^n (B_s-B_{t_j}) \xi_j} \right].
\end{align*}
Using relation (3.9) of \cite{NSZ} (with $H_0 = 1/2$), we obtain:
\begin{align} \label{ineq-I_1-g1-n>1-rough-tight}
	\|I_n (g_{n,R}^{(1)}(\cdot;t,s))\|_2^2
	\le 2(t-s) s^n \frac{R}{T} C^n \dfrac{s^{nH}}{\Gamma(nH+1)},
\end{align}
where $C$ is a constant that depends on $H$.
Therefore, by \eqref{ineq-I_1-g1-n=1-rough-tight} and \eqref{ineq-I_1-g1-n>1-rough-tight},  for any $R>T$,
\begin{align}
\label{ineq-I_n-norm-rough-1}
	\sum_{n\geq 1} (p-1)^{n/2}\|I_n (g_{n,R}^{(1)}(\cdot;t,s))\|_2
	\le C R^{1/2} (t-s)^{1/2},
\end{align}
where $C$ is a constant that depends on $T$ and $H$.

Next, we estimate the second sum in \eqref{rough-tight-1}. For $n=1$, noting that
\begin{align*}
	g_{1,R}^{(2)} (x_1;t,s) = \int_{B_R} \int_s^t p_{t-t_1}(x-x_1) dt_1 dx,
\end{align*}
we have
\begin{align*}
	\|I_1 (g_{1,R}^{(2)}(\cdot;t,s))\|_2^2&= \int_{B_R^2} \int_s^t \int_s^t e^{-i\xi(x-x')} \cF p_{t-t_1}(\xi) \cF p_{t-t_1'}(\xi)\mu(d\xi) dt_1 dt_1' dxdx'\\
&=4\pi R \int_s^t \int_s^t \int_{\bR} e^{-\frac{t-t_1}{2}|\xi|^2} e^{-\frac{t-t_1'}{2}|\xi|^2}\ell_{R}(\xi)\mu(d\xi) dt_1 dt_1'\\
&\leq 4\pi R (t-s)^2 \int_{\bR}\ell_R(\xi)\mu(d\xi).
\end{align*}
By Lemma \ref{lem3-2-NSZ}, $\int_{\bR}\ell_R(\xi)\mu(d\xi)\leq \e+\frac{C(\e)}{T}$ for any $R>T$. Hence, for any $R>T$,
\begin{align}
\label{ineq-I_1-g2-n=1-rough-tight}
	\|I_1 (g_{1,R}^{(2)}(\cdot;t,s))\|_2^2
	\le C R (t-s)^2.
\end{align}

For $n \ge 2$, we use the identity $\prod_{j=1}^n \cF p_{t_{j+1}-t_j}(\xi_1+\ldots+\xi_j) = \bE \left[ e^{i\sum_{j=1}^n B_{t-t_j} \xi_j} \right]$.
By direct calculation,
\begin{align*}
	& \|I_n (g_{n,R}^{(2)}(\cdot;t,s))\|_2^2
	= n! \langle g_{n,R}^{(2)}(\cdot;t,s), \tilde g_{n,R}^{(2)}(\cdot;t,s) \rangle_{\cP_0^{\otimes n}} \\
	& \quad =4 \pi R \sum_{\rho \in S_n} \int_{T_n(t)} d\pmb{t} \int_{T_n(t)} d\pmb{t'} \int_{(\bR^{d})^n} \mu(d\xi_1) \ldots \mu(d\xi_n)
	\bE \left[ e^{i\sum_{j=1}^n B_{t-t_j} \xi_j} \right]
	\bE \left[ e^{i\sum_{j=1}^n B_{t-t_j'} \xi_{\rho(j)}} \right]
	\nonumber \\
	& \quad \quad \quad \times 1_{[s,t]} (t_n) 1_{[s,t]}(t_n') \ell_R (\sum_{j=1}^n \xi_j) \\
&  \quad \leq  4 \pi R n! \int_{T_n(t)} d\pmb{t} \int_{(\bR^{d})^n} \mu(d\xi_1) \ldots \mu(d\xi_n)
	\bE \left[ e^{i\sum_{j=1}^n B_{t-t_j} \xi_j} \right]
	\ell_R (\sum_{j=1}^n \xi_n) \int_{T_n(t)} 1_{[s,t]}(t_n') d\pmb{t'}.
\end{align*}
Note that for $0 \le s < t \le T$, we have
\begin{align*}
	\int_{T_n(t)} 1_{[s,t]}(t_n') d\pmb{t'}
	= \int_s^t \dfrac{x^{n-1}}{(n-1)!} dx
	= \dfrac{t^n - s^n}{n!}
	\le \dfrac{(t-s) n t^{n-1}}{n!}.
\end{align*}
Hence, we have
\begin{align*}
	& \|I_n (g_{n,R}^{(2)}(\cdot;t,s))\|_2^2 \\
	& \quad \leq  4 \pi R (t-s) nt^{n-1} \frac{1}{n!} \int_{[0,t]^n} d\pmb{t} \int_{(\bR^{d})^n} \mu(d\xi_1) \ldots \mu(d\xi_n)
	\bE \left[ e^{i\sum_{j=1}^n B_{t-t_j} \xi_j} \right]
	\ell_R (\sum_{j=1}^n \xi_j).
\end{align*}
Note that
$\bE \left[ e^{i\sum_{j=1}^n B_{t-t_j} \xi_j} \right]
	= \bE \left[ e^{-\mathrm{Var} \sum_{j=1}^n B_{t-t_j} \xi_j} \right]$.
Using relation (3.9) of \cite{NSZ} (with $H_0 = 1/2$), we obtain:
\begin{align}
\label{ineq-I_n-g2-n>1-rough-tight}
	& \|I_n (g_{n,R}^{(2)}(\cdot;t,s))\|_2^2 \leq 4\pi R (t-s) C^n \frac{t^{n(H+1)-1}}{\Gamma(nH+1)}
\end{align}
where $C$ is a constant depends on $T$ and $H$. Thus, by \eqref{ineq-I_1-g2-n=1-rough-tight} and \eqref{ineq-I_n-g2-n>1-rough-tight}, for $0 \le s < t \le T$, and $R>T$, we have
\begin{align}
\label{ineq-I_n-norm-rough-2}
	\sum_{n\geq 1} (p-1)^{n/2}\|I_n (g_{n,R}^{(2)}(\cdot;t,s))\|_2
	\le C R^{1/2} (t-s)^{1/2},
\end{align}
where $C$ is a constant that depends on $T$ and $H$.
Relation \eqref{rough-tight} follows from \eqref{rough-tight-1}, \eqref{ineq-I_n-norm-rough-1} and \eqref{ineq-I_n-norm-rough-2}.

{\em Step 2 (finite dimensional distribution convergence).}
Fix $T>0$. We have to show that for any $m \in \bN_+$, $0\leq t_1<\ldots<t_m\leq T$,
\[
(Q_R(t_1),\ldots,Q_R(t_m)) \stackrel{d}{\to} (\cG(t_1),\ldots,\cG(t_m))
\]

Using the same argument as in the proof of Theorem 1.3.(iii) (Step 2) of \cite{BY}, it is enough to prove that for any $i,j=1,\ldots,m$,
\begin{equation}
\label{var-DF-ij}
{\rm Var}\Big(\langle D F_R(t_i), -DL^{-1} F_R(t_j) \rangle_{\cP_0}\Big) \leq C R.
\end{equation}
By Proposition \ref{propC} (Appendix \ref{appB}), for any $i,j=1,\ldots,m$, we have:
\[
{\rm Var}\Big(\langle D F_R(t_i), -DL^{-1} F_R(t_j) \rangle_{\cP_0}\Big)\leq 2\cA_1+\cA_2,
\]
where
\begin{align*}
\cA_1&=C_H^3 \int_{\bR^6} \|D_{z,y}^2F_{R}(t_i)-D_{z,y'}^2F_{R}(t_i)-D_{z',y}^2F_{R}(t_i)+D_{z',y'}^2F_{R}(t_i)
\|_4\\
& \quad \quad \quad \|D_{w,y}^2F_{R}(t_i)-D_{w,y'}^2F_{R}(t_i)-D_{w',y}^2F_{R}(t_i)+D_{w',y'}^2F_{R}(t_i)
\|_4\\
& \quad  \quad \quad \|D_{w}F_{R}(t_j)-D_{w'}F_{R}(t_j)\|_4\|D_{z}F_{R}(t_j)-D_{z'}F_{R}(t_j)\|_4\\
& \quad  \quad \quad
|y-y'|^{2H-2}|z-z'|^{2H-2}|w-w'|^{2H-2} dydy' dzdz'dwdw',
\end{align*}
and $\cA_2$ is defined by switching the positions of $F_R(t_i)$ and $F_R(t_j)$ in the definition of $\cA_1$.

Similarly to \eqref{A-bound}, it can be proved that $\cA_1 \leq C_T R$ and $\cA_2 \leq C_T R$, where $C_T>0$ is a constant depending on $T$. This proves \eqref{var-DF-ij}.

\appendix

\section{Feyman-Kac formula}
\label{appA}

In this section, we give the Feynman-Kac formula for the moments of the solution to equation \eqref{eq-heat} with time-independent noise, based on the functionals $\cI_{t,s}^{j,k}$ described below. This result is of independent interest. We include it here since the functional $\cI_{t,s}^{1,2}$ appears is needed in Theorem \ref{main}.(i) under Assumption C.

\medskip

For any $\e>0$, let $W^{\e}=\{W^{\e}(\varphi);\varphi \in \cD(\bR)\}$ be the mollified noise, where $W^{\e}(\varphi)=W(\varphi*p_{\e})=\int_{\bR}\varphi(x)\dot{W}^{\e}(x)dx$
and $\dot{W}^{\e}(x)=W(p_{\e}(x-\cdot))$ for any $x \in \bR$.

Consider the equation with noise $W^{\e}$:
\begin{align}
\label{mol-heat}
	\begin{cases}
		\dfrac{\partial u^{\e}}{\partial t} (t,x)
		= \dfrac{1}{2} \dfrac{\partial^2 u^{\e}}{\partial x^2} (t,x) + u^{\e}(t,x) \diamond \dot{W}^{\e}(x), \
		t>0, \ x \in \bR,\\
		u^{\e}(0,x) = 1
	\end{cases}
\end{align}
where $F \diamond W(h)=\delta(Fh)$ denotes the Wick product for any $F \in \bD^{1,2}$ and $h \in \cP_0$.

A process $u^{\e}=\{u_{\e}(t,x);t\geq 0,x\in \bR\}$ is a {\bf mild solution} of \eqref{mol-heat} if
\[
u^{\e}(t,x)=1+\int_0^t \int_{\bR}p_{t-s}(x-y)u^{\e}(s,y)\diamond \dot{W}^{\e}(y)dyds.
\]

Similarly to Proposition 5.2 of \cite{HH09}, it can be proved that the process $u^{\e}$ given by:
\begin{equation}
\label{FK-ue}
u^{\e}(t,x)=\bE^{B} \left[e^{W(A_{t,x}^{\e,B})-\frac{1}{2}\|A_{t,x}^{\e,B}\|_{\cP_0}^2} \right],
\end{equation}
is a solution to \eqref{mol-heat}, where $A_{t,x}^{\e,B}=\int_0^t p_{\e}(x+B_r-y)dr$ and $B=(B_t)_{t\geq 0}$ is a Brownian motion independent of $W$.

The following result is the analogue for the time-independent noise of Proposition 1.3 of \cite{NSZ} which considers a noise with temporal covariance function $\gamma_0(t)=|t|^{2H_0-2}$ for some $H_0 \in (\frac{1}{2},1)$. Our result corresponds formally to the case $H_0=1$.

\begin{proposition} \label{Appen-prop}
Suppose that $\gamma$ is non-negative and non-negative definite and $\mu$ satisfies (D), or Assumption C holds.
For any $t>0$ and $x\in \bR$,
$
\sup_{\e>0}\bE[(u^{\e}(t,x))^n]<\infty$,
the limit $\lim_{\e \downarrow 0}u^{\e}(t,x)$ exists in $L^p(\Omega)$ (for any $p\geq 1$), and coincides with the solution $u(t,x)$ of equation \eqref{eq-heat}. Moreover, for any integer $n\geq 2$,
\[
\bE\big[\prod_{j=1}^n u(t_j,x_j)\big]=\bE\left[\exp\left( \sum_{1\leq j<k\leq n}\cI_{t}^{j,k}(x_j-x_k)\right) \right],
\]
where $\cI_{t,s}^{j,k}(z)$ is defined formally by
\begin{equation}
\label{def-cI}
\cI_{t,t'}^{j,k}(z):=\int_0^t \int_0^{t'} \int_{\bR}e^{-i\xi(B_s^j-B_r^k+z)} \mu(d\xi) drds,
\end{equation}
and is understood as the $L^p(\Omega)$-limit (for any $p\geq 1$) as $\e \downarrow 0$ of
\[
\cI_{t,t',\e}^{j,k}(z)=\int_0^t \int_0^{t'} \int_{\bR}e^{-\e|\xi|^2}e^{-i\xi(B_s^j-B_r^k+z)} \mu(d\xi) drds,
\]
where $B^1,\ldots,B^n$ are i.i.d. $1$-dimensional Brownian motions, independent of $W$.
\end{proposition}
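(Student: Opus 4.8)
The plan is to start from the Feynman--Kac representation \eqref{FK-ue} for the mollified solution $u^{\e}$, compute all its joint moments explicitly, control them uniformly in $\e$, and then pass to the limit $\e\downarrow 0$. \emph{Step 1 (moments of $u^{\e}$).} Fix $t_1,\ldots,t_n$ and $x_1,\ldots,x_n$, take i.i.d. Brownian motions $B^1,\ldots,B^n$ independent of $W$, and write $A_j=A_{t_j,x_j}^{\e,B^j}$. By \eqref{FK-ue}, $\prod_j u^{\e}(t_j,x_j)=\bE^B[\prod_j e^{W(A_j)-\frac12\|A_j\|_{\cP_0}^2}]$. Taking $\bE^W$ first and using that $W$ is centered Gaussian, $\bE^W[e^{W(\sum_j A_j)}]=\exp(\frac12\|\sum_j A_j\|_{\cP_0}^2)$, so the diagonal terms cancel the $-\frac12\|A_j\|^2$ factors and only the cross terms survive:
\[
\bE\Big[\prod_{j=1}^n u^{\e}(t_j,x_j)\Big]
=\bE^B\Big[\exp\Big(\sum_{1\le j<k\le n}\langle A_{t_j,x_j}^{\e,B^j},A_{t_k,x_k}^{\e,B^k}\rangle_{\cP_0}\Big)\Big].
\]
A direct spectral computation, using $\cF p_{\e}(\xi)=e^{-\e|\xi|^2/2}$ (real, even) and $A_{t,x}^{\e,B}=\int_0^t p_{\e}(x+B_r-\cdot)dr$, gives $\langle A_{t_j,x_j}^{\e,B^j},A_{t_k,x_k}^{\e,B^k}\rangle_{\cP_0}=\cI_{t_j,t_k,\e}^{j,k}(x_j-x_k)$, since the integrand reduces to $e^{-\e|\xi|^2}e^{-i\xi(B_s^j-B_r^k+(x_j-x_k))}$. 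In particular $\bE[(u^{\e}(t,x))^n]=\bE^B[\exp(\sum_{j<k}\cI_{t,t,\e}^{j,k}(0))]$.

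\emph{Step 2 (uniform moment bound).} To bound the last expression uniformly in $\e$, I would apply the generalized H\"older inequality over the $\binom{n}{2}$ pairs, reducing matters to a single pairwise interaction $\bE^B[\exp(\lambda\,\cI_{t,t,\e}^{1,2}(0))]$. Expanding the exponential and using the factorization $\bE^B[e^{-i\sum_\ell \xi_\ell(B^1_{s_\ell}-B^2_{r_\ell})}]=e^{-\frac12{\rm Var}(\sum_\ell \xi_\ell B_{s_\ell})}e^{-\frac12{\rm Var}(\sum_\ell \xi_\ell B_{r_\ell})}$ together with $e^{-\e|\xi|^2}\le 1$, the moment $\bE^B[(\cI_{t,t,\e}^{1,2}(0))^m]$ is bounded, uniformly in $\e$, by the $\e$-free estimate $c^m t^{m(H+1)}/\Gamma(mH+1)$, obtained exactly as in relation (3.9) of \cite{NSZ} with $H_0=1/2$ (the same mechanism behind \eqref{4-16} and \eqref{4-19}). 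Summing over $m$ via the Mittag-Leffler bound \eqref{ML-function} yields $\sup_{\e}\bE^B[\exp(\lambda\,\cI_{t,t,\e}^{1,2}(0))]<\infty$ for every $\lambda>0$, hence $\sup_{\e}\bE[(u^{\e}(t,x))^{n}]<\infty$; applying this with exponent $n(1+\delta)$ gives uniform integrability of the relevant families.

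\emph{Step 3 (convergence and identification).} Since $e^{-\e|\xi|^2}\uparrow 1$, the same estimates show $\{\cI_{t,t',\e}^{j,k}(z)\}_\e$ is Cauchy in $L^p(\Omega^B)$ as $\e\downarrow 0$, defining the limit $\cI_{t,t'}^{j,k}(z)$, and the summability in \eqref{ineq-series-claim} (relation (3.19) of \cite{NSZ}) passes to the limit. The integrand in the moment formula then converges in probability and, by the uniform integrability from Step 2, in $L^1(\Omega^B)$, giving
\[
\bE\Big[\prod_{j=1}^n u^{\e}(t_j,x_j)\Big]\longrightarrow
\bE^B\Big[\exp\Big(\sum_{1\le j<k\le n}\cI_{t_j,t_k}^{j,k}(x_j-x_k)\Big)\Big].
\]
Independently, I would expand $u^{\e}(t,x)$ in Wiener chaos with kernels $f_n^{\e}$ obtained by mollifying $f_n$ in its spatial arguments; then $|\cF f_n^{\e}|\le|\cF f_n|$ and $\cF f_n^{\e}\to\cF f_n$ pointwise force $\|f_n^{\e}-f_n\|_{\cP_0^{\otimes n}}\to 0$, dominated by the $\e$-free bound of Theorem \ref{Thm-existence-heat} (regular case) or Lemma \ref{Thm-existence-heat-rough} (rough case), so $u^{\e}(t,x)\to u(t,x)$ in $L^2(\Omega)$ and, by the uniform higher moments, in every $L^p$. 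Hence $\bE[\prod_j u^{\e}(t_j,x_j)]\to\bE[\prod_j u(t_j,x_j)]$, and matching the two limits yields the asserted Feynman--Kac formula.

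The main obstacle is Step 2, the uniform-in-$\e$ exponential moment bound under Assumption C. Because the spectral density $|\xi|^{1-2H}$ is not integrable, the estimate on $\bE^B[(\cI_{t,t,\e}^{1,2}(0))^m]$ must exploit the regularizing effect of the double time integration and the precise Gaussian variance factors, which is exactly the delicate analysis carried out for relation (3.9) of \cite{NSZ}; the finiteness of the exponential moments then rests on the Mittag-Leffler-type convergence of $\sum_m x^m/\Gamma(mH+1)$. Under condition (D) in the regular case the corresponding bound is comparatively routine.
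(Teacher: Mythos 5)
Your proposal is correct and follows essentially the same route as the paper's (sketched) proof: the Feynman--Kac representation \eqref{FK-ue} with $\langle A^{\e,B^j}_{t_j,x_j},A^{\e,B^k}_{t_k,x_k}\rangle_{\cP_0}=\cI^{j,k}_{t_j,t_k,\e}(x_j-x_k)$, uniform-in-$\e$ exponential moments of the pairwise interactions (via H\"older over pairs, the estimates behind relation (3.9) of \cite{NSZ}, and the Mittag--Leffler bound \eqref{ML-function}), uniform integrability to pass to the limit in the moment formula, and $L^p$-convergence of $u^{\e}$, where your chaos-expansion argument with mollified kernels merely makes explicit the identification that the paper leaves as ``it can be shown that $v(t,x)$ coincides with $u(t,x)$''. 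The only slip is the intermediate bound $\bE[(\cI^{1,2}_{t,t,\e}(0))^m]\le c^m t^{m(H+1)}/\Gamma(mH+1)$, which is missing a factor $m!$ (the paper's own display gives $\bE[(\cI^{j,k}_{t,\e})^m]\le t^m\, m!\, c^m t^{mH}/\Gamma(mH+1)$, i.e.\ growth of order $(m!)^{1-H}$, as it must be since these are intersection-local-time-type functionals); this is harmless, because dividing by $m!$ in the exponential series still leaves $\sum_m (\lambda c t^{H+1})^m/\Gamma(mH+1)<\infty$, so your uniform exponential-moment conclusion and everything downstream of it stand.
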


\begin{proof} We only sketch the proof. We omit the details since they are similar to the proof of Proposition 1.3 of \cite{NSZ}.
See also Theorem 3.6 of \cite{HHNT} for the regular noise in space (colored in time), or Theorem 4.2 of \cite{HHLNT} for the rough noise in space (white in time). We consider only the case $t_j=t$ and $x_j=x$ for all $j=1,\ldots,n$. The general case is similar. We let $\cI_{t}^{j,k}=\cI_{t,t}^{j,k}(0)$ and $\cI_{t,\e}^{j,k}=\cI_{t,t,\e}^{j,k}(0)$.

{\em Step 1.} From \eqref{FK-ue}, it follows that
\[
\bE[(u^{\e}(t,x))^n]=\bE\left[\exp\left( \sum_{1\leq j<k\leq n}
\langle A_{t,x}^{\e,B^j}, A_{t,x}^{\e,B^k} \rangle_{\cP_0}\right) \right].
\]
By direct calculation,
$\langle A_{t,x}^{\e,B^j}, A_{t,x}^{\e,B^k} \rangle_{\cP_0}=\cI_{t,\e}^{j,k}$ for any $1\leq j<k\leq n$.

{\em Step 2.} Fix $1\leq j<k\leq n$.
 It can be proved that the limit $\cI_{t}^{j,k}$ of $\cI_{t,\e}^{j,k}$ as $\e \downarrow 0$ exists in $L^p(\Omega)$ (for any $p\geq 1$). Note that
\begin{align*}
\bE[(\cI_{t,\e}^{j,k})^n]&=\int_{\bR^n}e^{-\e\sum_{j=1}^n|\xi_j|^2} \int_{[0,t]^n}
e^{-\frac{1}{2}\sum_{j,k=1}^n(s_j \wedge s_k+r_j\wedge r_k)\xi_j \xi_k}
d\pmb{r_n} d\pmb{s_n} \mu(d\xi_1)\ldots \mu(d\xi_n)\\
&\leq  t^n n! \int_{\bR^n} \int_{T_n(t)} e^{-\sum_{j=1}^n (s_j-s_{j-1})|\xi_j+\ldots+\xi_n|^2}d\pmb{s_n} \mu(d\xi_1)\ldots \mu(d\xi_n).
\end{align*}
From this, one can infer that 
\[
\sup_{\e>0}\bE[e^{\lambda \cI_{t,\e}^{j,k}}]
<\infty.
\]
Hence, the family $\{e^{\sum_{1\leq j<k\leq n}\cI_{t,\e}^{j,k}}\}_{\e>0}$ is uniformly integrable, and
\[
\lim_{\e\downarrow 0}\bE[(u^{\e}(t,x))^n]=\bE[e^{\sum_{1\leq j<k\leq n}\cI_{t,\e}^{j,k})}].
\]
Moreover, $\sup_{\e>0}\bE[(u^{\e}(t,x))^n]<\infty$ and the family $\{u^{\e}(t,x)\}_{\e>0}$ is uniformly integrable.

{\em Step 3.} It follows that $u^{\e}(t,x)$ converges as $\e \to 0$ in $L^p(\Omega)$ (for any $p\geq 2$) to a limit $v(t,x)$. Finally, it can be shown that $v(t,x)$ coincides with $u(t,x)$.
\end{proof}

\begin{remark}
{\rm In the regular case, when $\gamma=\cF \mu$ is a function, it can be proved that
\[
\cI_{t,t'}^{1,2}(z):=\int_0^t \int_0^{t'} \gamma(B_s^1-B_r^2+z) drds.
\]
}

\end{remark}

\section{Second-order Poincar\'e inequality}
\label{appB}

In this section, we state a version of the second-order Poincar\'e inequality for the time-independent noise, rough in space, which is needed for the finite-dimensional convergence in Theorem \ref{main}.(iii), under Assumption C.
This results can be proved using the same argument as in the proof of Proposition 2.4 of \cite{NXZ}, except that here we consider two random variables $F$ and $G$, and there is no time covariance $\gamma_0$. We omit the details.

Suppose that Assumption C holds.
Let $D_{*}^{2,4}$ be the set of random variables $F \in \bD^{2,4}$ such that $DF$ has a measurable modification on $\Omega \times \bR_{+}\times \bR$, $D^2F$ has a measurable modification on $\Omega \times (\bR_+ \times \bR)^2$, $|DF|\in \cH$ and $|D^2F| \in \cH^{\otimes 2}$.

\begin{proposition}
\label{propC}
Let $F,G \in \bD_*^{2,4}$ be such that $\bE(F)=0$ and $\bE(G)=0$. Then
\[
{\rm Var}(\langle DF, -DL^{-1}G\rangle_{\cH})\leq 2\cA_1+\cA_2,
\]
where
\begin{align*}
\cA_1&=C_H^3 \int_{\bR^6} \|D_{z,y}^2F-D_{z,y'}^2F-D_{z',y}^2F+D_{z',y'}^2F\|_4
\|D_{w,y}^2F-D_{w,y'}^2F-D_{w',y}^2F+D_{w',y'}^2F\|_4\\
&\|D_{w}G-D_{w'}G\|_4\|D_{z}G-D_{z'}G\|_4
|y-y'|^{2H-2}|z-z'|^{2H-2}|w-w'|^{2H-2} dydy' dzdz'dwdw',
\end{align*}
and $\cA_2$ is defined by switching the positions of $F$ and $G$ in the definition of $\cA_1$.
\end{proposition}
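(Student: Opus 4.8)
The plan is to follow the template of the second-order Gaussian Poincar\'e inequality (Proposition 2.4 of \cite{NXZ}), adapted to the two random variables $F,G$ and to the purely spatial (time-independent) noise, where the temporal covariance $\gamma_0$ is absent. Write $X=\langle DF,-DL^{-1}G\rangle_{\cH}$, which belongs to $\bD^{1,2}$ thanks to $F,G\in\bD_*^{2,4}$. The first move is to apply the ordinary (first-order) Poincar\'e inequality to the scalar variable $X$, reducing everything to an estimate of a Malliavin-derivative norm:
\[
{\rm Var}\big(\langle DF,-DL^{-1}G\rangle_{\cH}\big)\le \bE\big[\|DX\|_{\cH}^2\big].
\]

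Next I would expand $DX$ by the Leibniz rule for the Malliavin derivative of an $\cH$-inner product of two $\cH$-valued processes: for $a\in\bR$,
\[
D_a X=\langle D^2_{a,\cdot}F,\,-DL^{-1}G\rangle_{\cH}+\langle DF,\,-D^2_{a,\cdot}L^{-1}G\rangle_{\cH}.
\]
The first summand carries $D^2F$ together with the first derivative of $G$ and will generate $\cA_1$, while the second carries $D^2G$ together with the first derivative of $F$ and generates $\cA_2$, i.e.\ the term obtained by switching $F$ and $G$. The precise combinatorial bookkeeping of the two contributions and their cross term, carried out exactly as in \cite{NXZ}, is what produces the stated asymmetric constant $2\cA_1+\cA_2$.

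The heart of the argument is the rough-case expansion of these inner products. Since $\mu(d\xi)=c_H|\xi|^{1-2H}d\xi$ with $H<\tfrac12$, the covariance kernel $|\cdot|^{2H-2}$ is not locally integrable, so every $\cP_0$-pairing must be expanded through the Gagliardo representation \eqref{Gagliardo} rather than against the kernel directly. Applying this representation to the outer $\cH$-norm $\|D_\cdot X\|_{\cH}^2$ replaces $D_aX$ by increments in a pair $(y,y')$ and produces the kernel $|y-y'|^{2H-2}$; applying it again to each inner pairing $\langle D^2_{y,\cdot}F,-DL^{-1}G\rangle_{\cH}$ (there are two such factors once the outer norm is squared) produces the kernels $|z-z'|^{2H-2}$ and $|w-w'|^{2H-2}$, for a total factor $C_H^3$. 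Combining the outer increment in $(y,y')$ with the inner increment in $(z,z')$ turns $D^2F$ into the rectangular increment $D^2_{z,y}F-D^2_{z,y'}F-D^2_{z',y}F+D^2_{z',y'}F$, exactly as in $\cA_1$, while the first derivative of $G$ enters through single increments $D_zL^{-1}G-D_{z'}L^{-1}G$. A H\"older step then factorizes the resulting fourth-order expectation into a product of four $L^4$-norms, and the contraction estimate $\|(D_z-D_{z'})(-L^{-1}G)\|_4\le\|(D_z-D_{z'})G\|_4$ (which follows by writing $-DL^{-1}G$ as an average of the $L^4$-contractions $P_\theta$ of the Ornstein--Uhlenbeck semigroup, via Mehler's formula) replaces the $L^{-1}G$-increments by plain $G$-increments, giving precisely the integrand of $\cA_1$; the term $\cA_2$ is obtained identically with $F$ and $G$ interchanged.

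The main obstacle I anticipate is making these repeated Gagliardo expansions rigorous in the rough regime: because the kernel is a genuine distribution one must carry the increment (difference) representation through every pairing and verify that all six-fold spatial integrals, as well as the Fubini interchanges between the $\cH^{\otimes 2}$-pairings and the expectation, are absolutely convergent. This is exactly the purpose of the space $\bD_*^{2,4}$: the existence of measurable modifications of $DF$ and $D^2F$ and the integrability of $|DF|$ and $|D^2F|$ against the rough kernel are what guarantee that each manipulation is legitimate. Since all the remaining computations parallel the single-variable case of \cite{NXZ} line by line (deleting $\gamma_0$ and inserting $G$ in place of the second copy of $F$), I would only record the modifications rather than reproduce the full calculation.
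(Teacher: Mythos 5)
Your proposal is correct and is essentially the paper's own argument: the paper simply defers to the proof of Proposition 2.4 of \cite{NXZ} (adapted to two random variables and no temporal covariance), and your steps --- the Poincar\'e inequality applied to $X=\langle DF,-DL^{-1}G\rangle_{\cH}$, the Leibniz rule, the threefold Gagliardo expansion producing $C_H^3$ and the three kernels, the H\"older factorization into four $L^4$-norms, and the Mehler-formula contraction removing $L^{-1}$ --- are exactly that argument. The one detail you leave implicit is that the second Leibniz term requires the analogous second-order contraction, namely that rectangular increments of $D^2L^{-1}G$ are dominated in $L^4$ by $\tfrac12$ times those of $D^2G$ (via $D^2L^{-1}G=\int_0^\infty e^{-2t}P_t(D^2G)\,dt$); this, together with the elementary treatment of the cross term, is what makes the bookkeeping close at $2\cA_1+\cA_2$ rather than $2\cA_1+2\cA_2$.
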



\begin{thebibliography}{99}



\bibitem{B21} Balan, R.M. (2021). Stratonovich solution for the wave equation. {\em J. Theoret. Probab.}, to appear. Preprint  arXiv:2105.08802

\bibitem{BC} Balan, R.M. and Chen, L. (2018). Parabolic Anderson model with space-time homogeneous Gaussian noise and rough initial condition. {\em J. Theoret. Probab.} {\bf 31}, no. 4, 2216–2265.

\bibitem{BCC} Balan, R.M., Chen, L. and Chen, X. (2020). Exact asympotics of the stochastic wave equation with time-independent noise. To appear in {\em Ann. Inst. Henri Poincar\'e}. 

\bibitem{BQS} Balan, R.M., Quer-Sardanyons, L. and Song, J. (2019). Existence of density for the stochastic wave equation with space-time homogeneous Gaussian noise. {\em Electr. J. Probab.} {\bf 24}, no. 106, 1-43.

\bibitem{BNQZ} Balan, R.M., Nualart, D., Quer-Sardanypons, L. and Zheng, G. (2021). The hyperbolic Anderson model: Moment estimates of the Malliavin derivatives and applications. Preprint available on arXiv:2021.10957.

\bibitem{BY} Balan, R.M. and Yuan, W. (2022). Spatial integral of the solution to hyperbolic Anderson model with time-independent noise. Preprint available on arXiv:2201.02319.



\bibitem{billingsley68} Billingsley, P. (1968). {\em Convergence of Probability Measures.} Wiley, New York.

\bibitem{BNZ21} Bola\~nos-Guerrero, R., Nualart, D. and Zheng, G. (2021). Averaging 2D stochastic wave equation. \emph{Electr. J. Probab.} {\bf 26} no. 102, 1-32 

\bibitem{CH2019} Chen, L. and Huang, J. (2019). Comparison principle for stochastic heat equation on $\bR^d$. {\em Ann. Probab.} {\bf 47}, 989–1035.

\bibitem{chen-kim} Chen, L. and Kim, K. (2019). Nonlinear stochastic heat equation driven by spatially colored noise: moments and intermittency. {\em Acta Math. Sci.} {\bf 39B}, 645-668.


\bibitem{CDST} Solving the hyperbolic Anderson model 1: Skorohod setting.
Preprint arXiv:2112.04954.

 \bibitem{dalang99}
 Dalang, R.C. (1999). Extending the martingale measure stochastic integral with applications to spatially homogeneous S.P.D.E.'s.  \emph{Electron. J. Probab.} {\bf 4}, no. 6, 1-29.


\bibitem{DNZ20}
 Delgado-Vences, F.,   Nualart, D. and Zheng G. (2020):
 A central limit theorem for the stochastic wave equation with fractional noise. {\em Ann. Inst. Henri Poincar\'e Probab. Stat.} {\bf 56}, 3020-3042.


\bibitem{HH09} Hu, Y. and Nualart, D. (2009). Stochastic heat equation driven by fractional noise and local time. {\em Probab. Th. Rel. Fields.} {\bf 143}, 285-328.

\bibitem{HHNT} Hu, Y., Huang, J., Nualart, D. and Tindel, S. (2015). Stochastic heat equations with general multiplicative Gaussian noises: H\"older continuity and intermittency. {\em Electr. J. Probab.} {\bf 20}, no. 55, 1-50.


\bibitem{HNV20} Huang J., Nualart D. and Viitasaari L. (2020).  A central limit theorem for the stochastic heat equation. {\em Stoch. Proc. Their Appl.} {\bf 130}, 7170-7184.


\bibitem{HNVZ} Huang, J., Nualart, D., Viitasaari L. and Zheng, G. (2020). Gaussian fluctuations for the stochastic heat equation with colored noise. {\em Stoch. PDE: Anal. Comp.} {\bf 8}, 402-421.

\bibitem{HHLNT} Hu, Y., Huang, J., L\^e, K., Nualart, D. and Tindel, S. (2018).
Parabolic Anderson model with rough dependence in space. In: ``Computations and Combinatorics in Dynamics, Stochastics and Control''. The Abel Symposium 2016, Springer, Cham, 477-498.


\bibitem{HLN17}
Huang, J., L\^e, K. and Nualart, D. (2017). Large time asymptotics for the parabolic Anderson model driven by space and time correlated noise. {\em Stoch. SPDEs: Anal. \& Comp.} {\bf 5}, 614-651.

\bibitem{NP}
 Nourdin I.  and Peccati G. (2012). {\em Normal approximations with Malliavin calculus: from Stein's method to universality.}
 Cambridge University Press, Cambridge.


 \bibitem{nualart06} Nualart D. (2006). {\em The Malliavin Calculus and Related Topics}. Second edition. Probability and Its Applications,
Springer-Verlag Berlin Heidelberg.


\bibitem{NN} Nualart, D. and Nualart, E. (2019). {\em Introduction to Malliavin Calculus.} Cambridge University Press, Cambridge.

\bibitem{NSZ} Nualart, D., Song, Y. and Zheng, G. (2021). Spatial averages for the parabolic Anderson model driven by rough noise. {\em ALEA, Latin Amer. J.  Probab. Math. Stat} {\bf 18}, 907-943.

\bibitem{NXZ} Nualart, D., Xia, P. and Zheng, G. (2021). Quantitative central limit theorems for the parabolic Anderson model driven by colored noises. Preprint arXiv:2109.03875.



\bibitem{NZ20EJP}
Nualart, D. and Zheng, G. (2020).
Averaging Gaussian functionals. {\em Electron. J. Probab.} {\bf 25}, no. 48, 1-54.

\bibitem{NZ20}
Nualart, D. and Zheng, G. (2022+).
Central limit theorems for stochastic wave equations in dimensions one and two. {\em Stoch. PDE: Anal. Comp}, to appear. Preprint arXiv:2005.13587

\bibitem{SSX2020}
Song, J., Song, X., and Xu, F. (2020).
Fractional stochastic wave equation driven by a Gaussian noise rough in space.
{\em Bernoulli} {\bf 26}, no. 4, 2699–2726.

\bibitem{vidotto}
Vidotto, A. (2020). An improved second-order Poincar\'e inequality
for functionals of Gaussian fields.
{\em J. Theor. Probab.} {\bf 33}, 396-427.


\end{thebibliography}
\end{document}